\documentclass{article}
\usepackage{arxiv}
\usepackage[T1]{fontenc}
\usepackage[utf8]{inputenc}
\usepackage{amsmath,amssymb,amsthm,mathtools}
\usepackage{microtype}
\usepackage{enumitem}
\usepackage{booktabs,tabularx,float,multirow}
\usepackage{algorithm,algpseudocode}
\usepackage{graphicx}
\usepackage{pifont}
\usepackage{xcolor}
\usepackage[colorlinks=true,linkcolor=blue!45!black,
 citecolor=blue!45!black,urlcolor=blue!50!black]{hyperref}
\newtheorem{theorem}{Theorem}[section]
\newtheorem{lemma}[theorem]{Lemma}
\newtheorem{proposition}[theorem]{Proposition}
\newtheorem{corollary}[theorem]{Corollary}
\newtheorem*{lowerboundrestated}{Theorem~\ref{thm:oracle-lower-bound} (restated)}
\theoremstyle{definition}
\newtheorem{assumption}[theorem]{Assumption}
\theoremstyle{remark}

\newcommand{\R}{\mathbb R}
\newcommand{\Span}{\operatorname{span}}
\newcommand{\dist}{\operatorname{dist}}
\newcommand{\argmin}{\operatorname*{arg\,min}}
\newcommand{\yesmark}{\textcolor{green!45!black}{\ding{51}}}
\newcommand{\nomark}{\textcolor{red!70!black}{\ding{55}}}
\renewcommand{\shorttitle}{CaCuSESOP with Automatic Local Acceleration}
\renewcommand{\headeright}{A Preprint}
\hypersetup{pdftitle={CaCuTe Meets SESOP: Better Theory and Practice with Lipschitz Hessians}}
\title{CaCuTe Meets SESOP: Better Theory and Practice with Lipschitz Hessians}
\author{Nazarii Tupitsa}
\date{}

\begin{document}
\maketitle

\begin{abstract}
We combine CaCuTe with sequential subspace optimization (SESOP) for smooth convex minimization under Lipschitz continuity of the Hessian. The resulting method, CaCuSESOP, achieves the global rate $O(k^{-2})$ with a fixed number of Hessian--vector products per iteration, while removing key limitations of classical SESOP. In particular, exact minimization of the original objective is replaced by minimization of a low-dimensional cubic upper model, which needs only be solved to a computable accuracy. The cubic regularization parameter is selected by backtracking.

When the Hessian at the minimizer is positive definite, CaCuSESOP automatically attains an accelerated local linear rate with complexity $O(\sqrt{\kappa_\star}\log(1/\varepsilon))$. The local strong-convexity parameter is not supplied to the algorithm. We give finite bounds for entering this regime and adapting the restart schedule. Finally, we prove that the global $O(k^{-2})$ rate is optimal for the considered fixed-budget gradient/Hessian--vector-product linear-span oracle class, even with momentum and unrestricted memory.
\end{abstract}

\keywords{sequential subspace optimization \and cubic regularization
\and Hessian--vector products \and convex optimization \and local convergence}

\section{Introduction}

We study convex minimization with a small, fixed number of
Hessian--vector products (HVPs) per iteration.
An HVP returns $\nabla^2f(x)v$ for a chosen vector $v$.

\subsection{Motivation}

CaCuTe \cite{tupitsa2025} separates two regimes of the cubic Taylor
model. When the quadratic Hessian term is small, a scaled gradient
step gives cubic-model decrease and an $O(k^{-2})$ rate while
this regime holds. One HVP checks the curvature condition. When
the quadratic term dominates, this certificate fails and the
gradient method falls back to standard gradient-descent decrease. Its general
convex guarantee is then $O(k^{-1})$.

The second regime is the starting point of this work. The model
is nearly quadratic, but gradient scaling alone does not use its
curvature efficiently. We seek a low-cost step for this regime
and a global $O(k^{-2})$ guarantee. The main question is how to
use a fixed number of HVPs to minimize the model without forming
or factorizing the Hessian. Automatic differentiation provides
these products directly \cite{pearlmutter1994}.

We construct the cubic model in a small SESOP space and minimize
that model instead of the original objective. Three HVPs suffice for
the convergence theory, so the inner solve requires no further
objective-oracle calls. The practical space also contains the previous
step and the Hessian--gradient direction. The algorithm first completes
a one-HVP backtracking search for the scaled-gradient step. If its
projected model residual is too large, the already computed HVP becomes
a direction of the full subspace model and the remaining columns are
formed. The resulting method combines both regimes with a fixed HVP
budget and admits automatic local acceleration by restarts.

\subsection{Related work}
\label{sec:related}

Table~\ref{tab:binary-comparison} gives a compact comparison of global
rates, directional curvature work, inner problems, parameter
dependence, and automatic local guarantees. The rate alone does not
describe the cost of a subspace method: minimizing the original
objective can require further oracle calls, whereas solving an
explicit model uses coefficients already computed.
Here an oracle call means an evaluation of $f$, $\nabla f$, or an HVP.

\begin{table}[htbp]
\centering\small
\caption{Compact comparison of rates, directional second-order work,
inner-problem complexity (IPC), parameter dependence, and automatic
local guarantees. A green
check means that the property is proved for the stated method; a red
cross means that no such result is established.}
\label{tab:binary-comparison}
\begin{minipage}{\textwidth}\footnotesize
\textbf{GR}: global convergence rate.
\par
\textbf{HVP-E}: number of Hessian--vector products, or equivalent
curvature probes, per outer iteration.
\par
\textbf{IPC}: inner-problem complexity, measured after the reduced
problem coefficients have been formed. It excludes objective-oracle
calls, construction of the subspace basis, and lifting the reduced
solution back to $\R^d$. A dense reduced problem of dimension $r$ costs
$O(r^3)$ under this convention. Hence, the three-, four-, and five-direction
Cubic SESOP variants have IPC $O(1)$, an $m$-dimensional Krylov or
Scieur subproblem has IPC $O(m^3)$, and a full-space problem has IPC
$O(d^3)$. The excluded basis operations cost $O(dr^2)$ to form or
orthogonalize a dense basis and $O(dr)$ to apply it.
\par
\textbf{PF}: parameter-free with respect to problem constants; neither
a smoothness constant nor a strong-convexity constant is supplied to
the algorithm. Ordinary algorithmic choices such as a memory budget
are not counted as problem parameters.
\par
\textbf{ALC}: automatic local convergence, without providing the local
strong-convexity parameter to the algorithm.
\par
\textbf{AALC}: automatic accelerated local convergence, without
providing the local strong-convexity parameter to the algorithm.
\end{minipage}
\par\smallskip
\begin{tabularx}{\textwidth}{@{}>{\raggedright\arraybackslash}Xcccccc@{}}
\toprule
Method
& GR
& HVP-E
& IPC
& PF
& ALC
& AALC\\
\midrule
SESOP \cite{narkiss2005}
& $O(k^{-2})$
& $\infty$\textsuperscript{$\dagger$}
& $\infty$\textsuperscript{$\dagger$}
& \yesmark
& \yesmark
& \nomark\\
\addlinespace
CaCuAdGD \cite{tupitsa2025}
& $O(k^{-1})$
& $1$
& ---
& \yesmark
& \yesmark
& \nomark\\
\addlinespace
Krylov CRN \cite{jiang2024}
& $O((mk)^{-1})$
& $m$
& $O(m^3)$
& \nomark
& \yesmark
& \nomark\\
\addlinespace
\begin{tabular}{@{}ll@{}}
\multirow{2}{*}{Generic Type-I \cite{scieur2024}}
& full/exact\\
& limited
\end{tabular}
& \begin{tabular}{@{}c@{}}$O(k^{-2})$\\$O(k^{-1})$\end{tabular}
& \begin{tabular}{@{}c@{}}$d^\ast$\\$m^\ast$\end{tabular}
& \begin{tabular}{@{}c@{}}$O(d^3)$\\$O(m^3)$\end{tabular}
& \begin{tabular}{@{}c@{}}\yesmark\\\yesmark\end{tabular}
& \begin{tabular}{@{}c@{}}\yesmark\\\yesmark\end{tabular}
& \begin{tabular}{@{}c@{}}\nomark\\\nomark\end{tabular}\\
\addlinespace
\begin{tabular}{@{}ll@{}}
\multirow{2}{*}{Adaptive Accelerated Type-I \cite{scieur2024}}
& full/exact\\
& limited
\end{tabular}
& \begin{tabular}{@{}c@{}}$O(k^{-3})$\\$O(k^{-2})$\end{tabular}
& \begin{tabular}{@{}c@{}}$d^\ast$\\$m^\ast$\end{tabular}
& \begin{tabular}{@{}c@{}}$O(d^3)$\\$O(m^3)$\end{tabular}
& \begin{tabular}{@{}c@{}}\yesmark\\\yesmark\end{tabular}
& \begin{tabular}{@{}c@{}}\nomark\\\nomark\end{tabular}
& \begin{tabular}{@{}c@{}}\nomark\\\nomark\end{tabular}\\
\addlinespace
Algorithm~\ref{alg:cubic}, exact baseline\textsuperscript{$\ddagger$}
& $O(\sqrt{\log k}/k^2)$
& $3$
& $O(1)$
& \nomark
& \yesmark
& \nomark\\
\addlinespace
Algorithm~\ref{alg:adaptive-cubic}, adaptive\textsuperscript{$\ddagger$}
& $O(\sqrt{\log k}/k^2)$
& $3$
& $O(1)$
& \yesmark
& \yesmark
& \nomark\\
\addlinespace
Algorithm~\ref{alg:deterministic-restarts}, deterministic restarts\textsuperscript{$\ddagger$}
& $O(k^{-2})$
& $3$
& $O(1)$
& \yesmark
& \yesmark
& \nomark\\
\addlinespace
Algorithm~\ref{alg:controlled-restarts}, controlled restarts\textsuperscript{$\ddagger$}
& $O(k^{-2})$
& $3$
& $O(1)$
& \yesmark
& \yesmark
& \yesmark\\
\addlinespace
CaCuSESOP (Algorithm~\ref{alg:cacusesop}), controlled restarts\textsuperscript{$\ddagger$}
& $O(k^{-2})$
& $1$--$5$
& $O(1)$
& \yesmark
& \yesmark
& \yesmark\\
\bottomrule
\end{tabularx}
\par\smallskip
\begin{minipage}{\textwidth}\footnotesize
\textsuperscript{$\dagger$} Classical SESOP prescribes no HVP oracle:
each outer iteration instead requires an exact minimizer of the original
objective over its current subspace. Under the stated smooth-convex
assumptions, this is a general low-dimensional smooth convex optimization
problem. A first-order iterative solver provides an
$\varepsilon$-accurate solution after finitely many calls, but no finite
uniform oracle budget returns the exact minimizer over this whole class.
Accordingly, $\infty$ denotes an unbounded worst-case budget for the
exact inner solve, including its gradient or equivalent curvature
probes. It does not assert literal HVP calls or nontermination on every
instance: quadratic objectives and other structured subproblems may be
solved exactly in finitely many operations. By contrast, a dash elsewhere
in the IPC column means that the method has no separate inner problem of
the type counted in that column.

\textsuperscript{$\ast$} Scieur does not call an HVP oracle. The $m$
retained gradient-difference directions play the role of curvature
directions and require forward gradients, $\Theta(dm)$ storage, and a
dense $m$-dimensional quasi-Newton/Anderson subproblem. Formation of
its reduced coefficients requires additional
ambient-vector arithmetic; IPC reports the subsequent $O(m^3)$ dense
solve. The precise fixed-memory bounds are $O(C_2/k+k^{-2})$ and
$O(C_4/k^2+k^{-3})$; no theorem makes $C_2$ or $C_4$ decay at a stated
rate with $m$. In the full/exact rows, ``exact'' refers to exact current
curvature, not to the accuracy of the reduced solver; both the
projection and secant errors vanish. Their faster outer rates require
$m=d$ and dense $O(d^3)$ algebra. The table therefore does not combine
$m$ and $k$ into a fictitious rate.

\textsuperscript{$\ddagger$} The same method may use the finite residual
criterion \eqref{eq:inexact-model} instead of an exact reduced argmin.
This requires no additional objective-oracle calls and preserves the
displayed convergence order, but changes the universal constants. The
constructive solver uses $J_{\mathrm{inner}}+1$ systems of order at most three
(or the dimension of the enlarged practical space),
so its IPC is $O(J_{\mathrm{inner}})$ rather than the idealized exact-solve entry. The finite count
$J_{\mathrm{inner}}$ is specified in
Proposition~\ref{prop:cubic-inner-work}.
For CaCuSESOP this finite criterion applies to the subspace fallback;
the gradient-line candidate is explicit and has its own residual test.

The PF checks for the two restart rows use their compatibility with
adaptive regularization; their fixed-coefficient implementations still
require a supplied Hessian-Lipschitz bound. The AALC check denotes an
asymptotic local rate of the form
$\exp(-ck/\sqrt{\kappa_\star})$ after a finite problem-dependent
transition; it is not a global linear-rate claim from the initial point.
The four displayed Cubic SESOP algorithms use three directions.
Corollary~\ref{cor:three-hvp-restarts} shows that the previous step may
be retained as an optional fourth practical direction without changing
their theoretical rates.
CaCuSESOP uses the five-direction space in the practical comparison
and the same controlled restart schedule. It completes backtracking for
the one-HVP gradient trial before checking the projected model residual.
Only a failed residual test activates the full subspace fallback.
\end{minipage}
\end{table}

\paragraph{SESOP and cubic subspace methods.}
The $O(k^{-2})$ SESOP certificate uses the current gradient, the
displacement from the starting point, and a weighted aggregate of
previous gradients \cite{narkiss2005,zibulevsky2014}. Practical SESOP
also retains a few previous-step directions. Our stated algorithms do
not need this memory direction; Corollary~\ref{cor:three-hvp-restarts}
allows it as an optional fourth direction. For a nonlinear function,
minimization in this space still requires access to the original
objective. The SESOP paper explains how structure can make these
calls inexpensive, but its theory counts outer iterations.
In our method the reduced cubic model is fully determined after
the HVPs. Its exact solve requires no more objective-oracle calls.
This statement concerns oracle cost; it does not claim a fixed
number of arithmetic operations for the reduced solve.

Second-order directions have appeared in SESOP before. SESOP--TN
computes an approximate Newton direction by several inner conjugate-gradient
iterations and then approximately minimizes the original objective over an
extended SESOP space \cite{zibulevsky2008sesoptn}. Zibulevsky also observed
that including the full Newton direction preserves Newton's local quadratic
behavior \cite{zibulevsky2014}; a related general comparison principle was
proved recently for methods that decrease the objective at least as much as
the full Newton step \cite{shea2024greedy}. These results do not give a
fixed-HVP construction of the SESOP certificate step: the truncated-Newton
budget is not fixed by the global theory, while forming the full Newton
direction requires solving a full-dimensional linear system.

Cubic regularization supplies the Taylor upper bound and descent
estimate \cite{nesterovpolyak2006}. Krylov CRN also minimizes a cubic
model in a subspace, but builds that space by repeated Hessian
products \cite{jiang2024}. We use the SESOP directions and control
the error caused by replacing exact objective minimization.

Random cubic models on sampled subspaces have also been studied in
Stochastic Subspace Cubic Newton \cite{hanzely2020sscn}; that construction
does not use the deterministic SESOP certificate directions considered here.

Kuruzov and Stonyakin analyze SESOP with inexact gradients and inexact
solutions of the original low-dimensional objective subproblem
\cite{kuruzovstonyakin2026}. With fixed gradient and subproblem errors,
their estimate contains explicit residual terms. In the current version
they also turn this perturbation analysis into a target-accuracy result:
under convexity of the restricted objective and a bounded search region,
they prescribe a subproblem tolerance depending on the desired outer
accuracy $\varepsilon$ and bound the finite number of ellipsoid iterations
needed to attain it. Thus this is a finite $\varepsilon$-accurate procedure,
not a finite computation of the exact subspace argmin; its tolerance and
inner complexity depend on the requested final accuracy. This dependence
makes their stated guarantee non-anytime. If a run was configured for
$\varepsilon_1$ and one later requests $\varepsilon_2<\varepsilon_1$, the
published theorem requires the tighter $\varepsilon_2$-dependent tolerance
at every outer iteration. Merely continuing the original run is not covered;
a direct application restarts it with the tighter tolerance. A continuation
schedule might avoid this repetition, but no target-independent schedule or
corresponding theorem is given there. Moreover, every inner ellipsoid
iteration queries the original objective or a projected original gradient at
a new point.

Our relative residual test concerns a different object. After at most three
HVPs have formed the reduced cubic model, the algorithm solves that explicit
model only to the bounded relative stationarity accuracy in
\eqref{eq:inexact-model}. The test contains no requested final accuracy and
can be checked and enforced using reduced quantities alone, without another
high-dimensional oracle call. The same run may therefore be stopped,
inspected, and continued to higher accuracy, while the global and accelerated
restart guarantees remain valid at every horizon. We therefore do not claim
to be the first analysis of inexact SESOP subproblems; our contribution is a
target-independent finite model solve with no additional original-oracle
calls and with the global-to-local guarantees of the proposed method.

We are not aware of an earlier result that replaces exact objective
minimization over the classical SESOP certificate space by one cubic
Taylor-model step constructed from a constant number of HVPs, controls the
resulting certificate defect, and simultaneously proves the inverse-square
global rate and automatic accelerated local convergence described below.

CaCuAdGD uses a scaled gradient step with one directional curvature
probe and function-value backtracking \cite{tupitsa2025}. Its general
convex rate is $O(k^{-1})$; the $O(k^{-2})$ regime requires the
directional-curvature condition stated in that work. Krylov CRN uses
$m$ new HVPs to form an $m$-dimensional cubic model and proves the
mixed bound $O((mk)^{-1}+k^{-2})$ \cite{jiang2024}. Thus a fixed $m$
generally leaves an $O(k^{-1})$ term. Its guarantee assumes a
Lipschitz Hessian and a supplied regularization bound, and its hidden
spectral constant may depend on the chosen Krylov dimension.

\paragraph{Scieur's framework.}
Scieur constructs reduced models from gradient differences
\cite{scieur2024}. Fixed memory is part of the construction:
Algorithm~1 discards old directions and adds the component of the
current gradient orthogonal to the retained space. Theorem~3
establishes gradient inclusion and an orthonormal direction matrix.
This is a prescribed update, not arbitrary truncated Anderson
acceleration. The experimental choice $m=25$ is not a theoretical
threshold; Appendix~F.4 also varies the memory size.

The cost of the approximation appears in the slower terms of the
rates. In Scieur's notation, let $L$ be the Hessian Lipschitz
constant, $R$ the level-set radius, $P^i$ the orthogonal projector
onto the current search space, $\delta$ the uniform relative
secant-error bound, and $\kappa$ the direction-matrix condition
bound. The points $z^i$ in the formula below are its stored secant
base points; they are not our corrected-gradient aggregates. Theorem~6 gives an $O(C_2R^2/k+LR^3/k^2)$ bound, apart
from a faster initialization term, with
\[
 C_2=\frac{\delta L}{2}(\kappa+2\kappa^2)
 +\max_{i\le k}\|\nabla^2f(x^i)-P^i\nabla^2f(x^i)P^i\|.
\]
For the recommended nonaccelerated update, $\kappa=1$.
With forward-difference length $h$ and at most $m$ retained
directions, monotonicity and the bounded level set give
$\delta\le\sqrt m(h+4R)$: each secant-error entry is
$h+2\|z^i-x^k\|$, where both centers lie in that set.
Thus fixed memory is compatible with the theorem, but it does
not give a general $1/m$ improvement of $C_2$.

Increasing the memory to the problem dimension, $m=d$, does not by
itself yield a pure $O(k^{-2})$ bound. If the retained directions span $\R^d$
at every iteration, then $P^i=I$ and only the projection term in
$C_2$ vanishes. With orthonormal directions, the remaining coefficient
is $3\delta L/2$: finite differences and curvature information from
older points still contribute to $\delta$.
Scieur distinguishes full memory, exact Hessian actions, and evaluation
of all curvature information at the current point; their combination
recovers full cubic Newton \cite[Section~3.3.2]{scieur2024}.
This construction uses $d$ current HVPs if the Hessian is obtained
column by column. For a dense model, storage is $O(d^2)$ and the
spectral solve costs $O(d^3)$ arithmetic operations, followed by a
scalar solve. The subproblem remains an explicit cubic model,
not exact minimization of the original nonlinear objective.

The accelerated Algorithm~7 has the different bound in Theorem~8.
Its coefficient $C_4$ involves secant error and the Hessian coupling
$\|(I-P)\nabla^2f(x)P\|$, where $x$ is a model base and $P$
projects onto its search space, rather than the full omitted-Hessian
term in $C_2$. Requirements~1b--3 must hold
at the actual extrapolated model bases; gradient inclusion only
at an outer iterate is insufficient. Neither theorem asserts the
pure full-Newton rate for a general fixed-dimensional space.
These are upper bounds, not lower bounds on actual convergence.
The accelerated estimate-sequence minimizer is explicit
\cite[Proposition~1]{scieur2024}; its $O(k^{-2})$ guarantee does not
require a generic full-dimensional Anderson minimization.
The paper tests both variants. Our experimental baseline uses the
recommended nonaccelerated Algorithms~1, 3, and~4.

Our HVPs give the exact reduced Hessian, so secant-error assumptions
are unnecessary. Linearly dependent directions are discarded, and our
bounds do not require angles between the remaining directions to
stay away from zero. Both methods solve explicit models; exact
minimization of the original objective is a feature of classical
SESOP.

\paragraph{Linear convergence without knowing $\mu$.}
This property is shared by several methods. For example, the
SESOP descent bound \cite[Proposition~1]{narkiss2005}, for an
$L$-Lipschitz gradient and strong convexity of modulus $\mu$, gives
\[
 f(x^{k+1})-f_\star
 \le(1-\mu/L)\bigl(f(x^k)-f_\star\bigr),
\]
although the algorithm does not use $\mu$. The CaCuAdGD entry in
the table is likewise a consequence of its descent bounds under
the additional conditions stated there, rather than a separate
theorem in that paper. Krylov CRN proves a strong-convexity result
explicitly \cite[Theorem~2]{jiang2024}.
For Scieur's nonaccelerated Type-I method the same property follows
from a model comparison, although it is not stated as a local theorem
in~\cite{scieur2024}. On a neighborhood of the minimizer, let
$L_{\mathrm{loc}}>0$ bound both the Hessian norm and the quadratic
form in the reduced model. The cubic coefficients of that model are
uniformly bounded there by the level-set curvature and the secant-error
and conditioning bounds. Gradient inclusion therefore permits the
model competitor $s=-\nabla f(x^k)/(2L_{\mathrm{loc}})$. Exact model
minimization and the accepted upper-model test give
\[
 f(x^{k+1})-f(x^k)
 \le-\frac{3}{8L_{\mathrm{loc}}}\|\nabla f(x^k)\|^2
     +O\!\left(\|\nabla f(x^k)\|^3\right)
 \le-\frac{1}{4L_{\mathrm{loc}}}\|\nabla f(x^k)\|^2
\]
for all sufficiently large $k$, because global convergence and local
smoothness imply $\|\nabla f(x^k)\|\to0$.
Near a positive-definite minimizer, the local PL inequality
$\|\nabla f(x^k)\|^2\ge2\mu(f(x^k)-f_\star)$ therefore gives
\[
 f(x^{k+1})-f_\star
 \le\left(1-\frac{\mu}{2L_{\mathrm{loc}}}\right)
       (f(x^k)-f_\star).
\]
The constants $L_{\mathrm{loc}}$ and $\mu$ occur only in this
comparison proof and are not supplied to the method.
Global convergence and compactness ensure entry into the local
neighborhood when the minimizer has positive definite Hessian.
Thus automatic local linear convergence is shared with this baseline,
including its fixed-memory forward-difference variant.
For Scieur's accelerated Algorithm~7, no automatic local linear
convergence guarantee is proved in the cited paper. Its model is based
at an extrapolated point, so the preceding descent comparison does not
directly give a contraction of successive outer iterates.

Small-dimensional relaxation methods provide another precedent
\cite{nesterov2021sdr}; Tupitsa proves adaptation to strong convexity
and the Polyak--\L{}ojasiewicz condition without supplying the modulus
\cite{tupitsa2020,tupitsa2022}. Li, Shi, and Yuan prove linear
convergence of forward--backward accelerated methods without that
information \cite{lishiyuan2024}. Our local theorem establishes
this property for the proposed cubic SESOP step under positive
definiteness of the Hessian at the minimizer.

\subsection{Contributions and organization}

\begin{itemize}[leftmargin=*,itemsep=5pt]
 \item \textbf{The method and its one-step certificate.}
 We replace the exact SESOP subspace minimization by an explicit cubic
 model. For the basic fixed-$H$ iteration, one new gradient and at most
 three HVPs determine the model; solving it needs no further calls to the
 original objective oracle (Proposition~\ref{prop:oracles}). The new
 gradient is no longer exactly orthogonal to the space, but its projected
 component is bounded by the decrease. This is the single error estimate
 used by all subsequent global results (Lemma~\ref{lem:defect}).
 \item \textbf{Baseline global and local convergence.}
 Under the assumptions below, Theorem~\ref{thm:main} gives
 \[
 f(x^K)-f_\star
 =O\!\left((L_0D_0^2+HD_0^3)
      \frac{\sqrt{1+\log(K+1)}}{(K+1)^2}\right).
 \]
 Here $L_0$ bounds the Hessian norm on the initial sublevel set,
 and $D_0$ bounds its radius about a minimizer; neither is an input.
 If the Hessian at the minimizer is positive definite, the same
 iteration converges locally linearly without a supplied
 strong-convexity parameter (Theorem~\ref{thm:local}).
 A local PL condition also gives linear convergence and finite
 trajectory length with singular Hessians and nonunique minimizers
 (Theorem~\ref{thm:local-pl}). The adaptive version has the same
 conclusions when the accepted coefficients stay bounded away from
 zero (Corollary~\ref{cor:adaptive-pl}); this bound is not supplied
 to the algorithm.
 \item \textbf{Constant-budget automatic local acceleration.}
 Deterministic epochs remove the logarithmic factor without adding
 oracle calls (Theorem~\ref{thm:restarted-rate}). A second controller
 chooses epoch lengths from observed function decreases. It retains
 the global $O(K^{-2})$ guarantee and, with the same budget of at most
 three HVPs per accepted step, automatically attains the accelerated local
 root rate $\exp(-c/\sqrt{\kappa_\star})$, where
 $\kappa_\star$ is the condition number of $\nabla^2f(x^\star)$.
 Neither that number, the local strong-convexity parameter, nor
 the optimal value $f_\star$ is supplied
 (Theorem~\ref{thm:automatic-acceleration}). Thus the accelerated local
 complexity is $O(\sqrt{\kappa_\star}\log(1/\varepsilon))$, although the
 local strong-convexity parameter is used only in the analysis. Combined
 with adaptive regularization, the implementation also needs no supplied
 Hessian-Lipschitz constant. This is the parameter-free, fixed-budget
 global-to-local guarantee claimed in this paper.
 This is not only an infinite-horizon statement.
 Corollary~\ref{cor:explicit-transition} gives an explicit positive
 local threshold $\varepsilon_{\mathrm{loc}}$ and a finite iteration bound for entry and restart adaptation
 $T_{\mathrm{adapt}}$. For every $0<\varepsilon<\varepsilon_{\mathrm{loc}}$, the same
 trajectory reaches $f(x^K)-f_\star\le\varepsilon$ once
 \[
  K\ge
  4T_{\mathrm{adapt}}
  +\frac{232\sqrt{\kappa_\star}}{\log2}
                   \log\frac{\varepsilon_{\mathrm{loc}}}{\varepsilon}.
 \]
 accepted steps, with all entry and adaptation terms finite and
 problem-dependent. The controller uses none of these analysis
 quantities, including $\varepsilon_{\mathrm{loc}}$ or $f_\star$.
 An adapted sharp CG/Krylov construction shows that
 the global exponent two cannot be improved uniformly in the stated
 fixed-budget gradient/HVP linear-span oracle class, even with momentum
 and unrestricted memory (Theorem~\ref{thm:oracle-lower-bound}).
 \item \textbf{Removing implementation idealizations.}
 Backtracking selects the cubic coefficient without a supplied Hessian
 Lipschitz bound and has an amortized linear trial count
 (Proposition~\ref{prop:counts}). A computable relative model residual
 replaces exact inner minimization while preserving the global and
 restart guarantees without additional objective-oracle calls. The criterion
 is independent of the requested final accuracy: one trajectory can be
 stopped, inspected, and continued without reconfiguring earlier model solves
 (Proposition~\ref{prop:inexact-model} and
 Corollary~\ref{cor:inexact-acceleration}). The four cubic-subspace
 algorithms use only three HVPs per accepted step. The preceding-step
direction may be added as a fourth practical direction without changing
the guarantees (Corollary~\ref{cor:three-hvp-restarts}).
 \item \textbf{CaCuSESOP: the combined practical method.}
 Section~\ref{sec:cacusesop} combines the CaCuTe gradient trial, the
 cubic SESOP fallback, previous-step memory, a Hessian--gradient
 direction, and the adaptive restart controller. The method first
 completes backtracking for the gradient trial using one HVP. It forms
 the remaining model columns only if the accepted trial fails the
 projected-residual certificate. The resulting method retains
 global $O(K^{-2})$ convergence and automatic accelerated local
 convergence with a finite cost of entry and restart adaptation
 (Corollary~\ref{cor:cacusesop-rates}). The practical version uses at
 most five HVPs; its three-direction core has the same convergence
 orders. The previous step and Hessian--gradient vector enlarge the
 practical model without changing the proof.
\end{itemize}

The paper follows the same dependency order as the proof. Section~\ref{sec:method}
defines the method and proves the two one-step bounds reused everywhere
else. Section~\ref{sec:baseline} first derives the global rate and then
the two local results. Section~\ref{sec:extensions} improves or
clarifies one aspect at a time: prescribed restarts, the sharpness of
the resulting global order, automatic restart control, the three-HVP
space, and finite inner solves. Each algorithmic
subsection states explicitly which earlier bounds it reuses.
Section~\ref{sec:cacusesop} assembles these ingredients into CaCuSESOP
and explains the role of the two additional practical directions.
Section~\ref{sec:experiments} gives the numerical protocol. The longer technical arguments are separated from this line:
the oracle lower bound is proved in Appendix~\ref{app:oracle-lower-bound},
backtracking is analyzed in Appendix~\ref{app:backtracking}, and the
finite inner solver in Appendix~\ref{app:finite-solves}.

\section{The method and its one-step guarantees}
\label{sec:method}

The goal of this section is to introduce only the objects needed by the
algorithm and to isolate the two estimates used by every convergence
result. We first state the assumptions, then motivate and define the
cubic subspace step, and finally prove its one-step properties.

\subsection{Notation and main assumptions}
\label{sec:assumptions}

We consider the unconstrained problem
\[
 \min_{x\in\R^d}f(x).
\]
The inner product is $u^Tv$, and $\|\cdot\|$ denotes the Euclidean
norm for vectors and the induced operator norm for matrices.
The gradient and Hessian are denoted throughout by $\nabla f(x)$ and
$\nabla^2f(x)$. Iteration indices are superscripts on vectors and
matrices, as in $x^k$, $s^k$, and $Q^k$, and subscripts on scalars,
as in $\omega_k$ and $H_k$. We use $k$ for the current iteration,
$K$ for a total number of accepted steps, and $i$ for a summation
index over steps. In a restarted run, $j$ numbers epochs or controller
phases. The epoch points $y^k$ are numbered locally from zero;
the concatenated trajectory $x^K$ counts all accepted steps. Coordinate indices
remain subscripts, so $q_1^k$ is the first column of $Q^k$.
We fix the starting point $x^0$ and make the following
three standard assumptions for convex optimization with cubic models.

\begin{assumption}[Convexity and differentiability]\label{ass:convex}
The function $f:\R^d\to\R$ is twice continuously differentiable and
convex. Thus, for every $x,y\in\R^d$,
\begin{equation}\label{eq:convexity}
 f(y)\ge f(x)+\nabla f(x)^T(y-x).
\end{equation}
\end{assumption}

The initial sublevel set contains all points whose value is no larger
than the starting value. Its boundedness is a standard assumption:
a decreasing sequence of objective values then keeps the iterates
in a bounded region.

\begin{assumption}[Bounded initial sublevel set]\label{ass:level}
The set
\begin{equation}\label{eq:level-set}
\mathcal C_0=\{x\in\R^d:f(x)\le f(x^0)\}
\end{equation}
is bounded.
\end{assumption}

By continuity, $\mathcal C_0$ is closed and hence compact.
The minimum of $f$ over this nonempty compact set is a global minimum,
since every point outside it has value greater than $f(x^0)$.
Fix a minimizer $x^\star\in\mathcal C_0$ and write
$f_\star=f(x^\star)$. Convexity also makes $\mathcal C_0$ convex.

\begin{assumption}[Hessian Lipschitz continuity]\label{ass:hessian}
There exists a constant $H>0$ such that
\begin{equation}\label{eq:hessian-lipschitz}
 \|\nabla^2f(x)-\nabla^2f(y)\|\le2H\|x-y\|
 \qquad\text{for all }x,y\in\mathcal C_0.
\end{equation}
\end{assumption}

This inequality bounds how much the Hessian can change between two
points. The basic algorithm is supplied with such a bound $H$ and uses it at
every iteration. Section~\ref{sec:adaptive} will remove the need to
supply this bound by checking the required estimates at each trial point.
Assumptions~\ref{ass:convex}--\ref{ass:hessian} remain in force
throughout the global analysis.

Two finite quantities will appear in the estimates:
\begin{equation}\label{eq:level-constants}
 L_0=\max_{x\in\mathcal C_0}\|\nabla^2f(x)\|,
 \qquad D_0=\max_{x\in\mathcal C_0}\|x-x^\star\|.
\end{equation}
Neither is an input to the method. In particular, we impose no global
bound on $\|\nabla^2f(x)\|$. The role of the bounded level set is to
provide these constants once monotonicity has been established.
We will prove that each model step remains in $\mathcal C_0$ before
using $L_0$ to estimate its decrease.

The standard Taylor estimates associated with
Assumption~\ref{ass:hessian} are the following
\cite[Lemma~1]{nesterovpolyak2006}:
\begin{align}
 f(x+s)&\le f(x)+\nabla f(x)^Ts
       +\tfrac12s^T\nabla^2f(x)s+\tfrac H3\|s\|^3,
       \label{eq:taylor-upper}\\
 \|\nabla f(x+s)-\nabla f(x)-\nabla^2f(x)s\|
       &\le H\|s\|^2.\label{eq:taylor-gradient}
\end{align}
These estimates are used only when $[x,x+s]\subseteq\mathcal C_0$.
Indeed, the two remainders are, respectively,
\[
 \int_0^1(1-t)s^T[\nabla^2f(x+ts)-\nabla^2f(x)]s\,dt,
 \qquad
 \int_0^1[\nabla^2f(x+ts)-\nabla^2f(x)]s\,dt.
\]
Bounding the Hessian difference by $2Ht\|s\|$ gives $H\|s\|^3/3$
and $H\|s\|^2$. For the descent analysis, only an upper bound on
the function remainder is needed in \eqref{eq:taylor-upper}.

\subsection{Descent analysis: the change to SESOP}
\label{sec:descent}

Let $x\in\mathcal C_0$, and let $S$ be a linear subspace containing
$\nabla f(x)$. Write $\Pi_Sv$ for the Euclidean orthogonal projection
of a vector $v$ onto $S$: its component along the search space.
The construction has three steps.

\textbf{1. Start from the SESOP inner problem.}
Classical SESOP minimizes the original objective over the search space:
\[
 s\in\argmin_{u\in S} f(x+u)
 \quad\Longrightarrow\quad \Pi_S\nabla f(x+s)=0.
\]
The gradient at the new point is orthogonal to every search direction.
This identity is used in the $O(k^{-2})$ convergence proof. Computing
the exact minimizer is the costly part for a nonlinear function.

\textbf{2. Replace the objective by its cubic upper model.}
We choose
\begin{equation}\label{eq:model}
 s=\argmin_{u\in S}
 \left\{\nabla f(x)^Tu+\tfrac12u^T\nabla^2f(x)u
                         +\tfrac H3\|u\|^3\right\}.
\end{equation}
If $S$ has at most three dimensions, three HVPs specify the whole
model. Its first-order condition is
\[
 \Pi_S\bigl(\nabla f(x)+\nabla^2f(x)s+H\|s\|s\bigr)=0.
\]
Taking the inner product with $s$ gives
$\nabla f(x)^Ts=-s^T\nabla^2f(x)s-H\|s\|^3$.

\textbf{3. Bound the error in orthogonality by the decrease.}
For this calculation, suppose that $[x,x+s]\subseteq\mathcal C_0$;
Lemma~\ref{lem:invariance} below proves that the model step always
has this property. Substitute the last identity into the function
Taylor bound, and subtract the projected first-order condition
from the gradient Taylor bound. This gives
\begin{align}
 f(x)-f(x+s)&\ge
 \underbrace{\tfrac12s^T\nabla^2f(x)s}_{\text{quadratic decrease}}
 +\underbrace{\tfrac{2H}{3}\|s\|^3}_{\text{cubic decrease}},
 \label{eq:preview-descent}\\
 \|\Pi_S\nabla f(x+s)\|&\le H\|s\|^2+H\|s\|^2
 =2H\|s\|^2.\label{eq:preview-projection}
\end{align}
The cubic term in the first line bounds $\|s\|^3$ by
$3(f(x)-f(x+s))/(2H)$. Raising that bound to the power $2/3$
and using the second line yields
\begin{equation}\label{eq:preview-defect}
 \boxed{\quad
 \|\Pi_S\nabla f(x+s)\|
 \le(18H)^{1/3}\bigl(f(x)-f(x+s)\bigr)^{2/3}.
 \quad}
\end{equation}
This is the key change: exact SESOP has zero projected gradient;
the cubic step leaves an error whose size is controlled by its own
decrease. We remove the relevant component before adding the new
gradient to the aggregate. Section~\ref{sec:algorithm} specifies
the correction, and Section~\ref{sec:one-step} proves the one-step
bounds. The global proof then sums the correction errors using
\eqref{eq:preview-defect}.

\subsection{Fixed-coefficient cubic SESOP}
\label{sec:algorithm}

Algorithm~\ref{alg:cubic} uses the current gradient, the displacement
from $x^0$, and a weighted aggregate $z^k$
of corrected gradients. The two changes to classical SESOP are
marked in the listing: the cubic model solve and the gradient
correction. An orthonormal basis removes redundant directions.

\begin{algorithm}[H]
\caption{Fixed-coefficient cubic SESOP}\label{alg:cubic}
\begin{algorithmic}[1]
\Require Initial point $x^0$; $H>0$ satisfying Assumption~\ref{ass:hessian}
\State Evaluate $\nabla f(x^0)$
\State $z^0\gets\nabla f(x^0)$, $\omega_0\gets1$
\For{$k=0,1,\ldots$}
 \If{$\nabla f(x^k)=0$}
  \State \Return $x^k$
 \EndIf
 \State $\mathcal S^k\gets\Span\{\nabla f(x^k),x^k-x^0,z^k\}$
 \label{line:space}
 \State Form an orthonormal basis $Q^k\in\R^{d\times r_k}$ of
 $\mathcal S^k$, with $r_k\le3$
 \State $B^k\gets (Q^k)^T[\nabla^2f(x^k)Q^k]$
 \Comment{$r_k$ HVPs}
 \Statex \textbf{Change 1: minimize the cubic model instead of $f$.}
 \State $\displaystyle a^k\gets\argmin_{a\in\R^{r_k}}
 \left\{\nabla f(x^k)^TQ^k a+\tfrac12a^TB^k a
                            +\tfrac H3\|a\|^3\right\}$
 \label{line:model}
 \Statex \hspace{\algorithmicindent}Solve this small problem by
 the scalar equation~\eqref{eq:secular}.
 \State $s^k\gets Q^k a^k$, $x^{k+1}\gets x^k+s^k$
 \State Evaluate $\nabla f(x^{k+1})$
 \Statex \textbf{Change 2: correct the gradient before aggregation.}
 \State $\mathcal V^{k+1}\gets\Span\{x^{k+1}-x^0,z^k\}$
 \State $e^{k+1}\gets\Pi_{\mathcal V^{k+1}}\nabla f(x^{k+1})$
 \label{line:correction}
 \State $h^{k+1}\gets\nabla f(x^{k+1})-e^{k+1}$
 \State $\omega_{k+1}\gets(1+\sqrt{1+4\omega_k^2})/2$
 \State $z^{k+1}\gets z^k+\omega_{k+1}h^{k+1}$
\EndFor
\end{algorithmic}
\end{algorithm}

The previous step $s^{k-1}$ may optionally be added as a fourth
direction. This additional direction does not change any theorem
below, but it increases the worst-case HVP count from three to four.

\subsubsection{Backtracking for the basic and subsequent algorithms}
\label{sec:adaptive}

The supplied coefficient $H$ can be replaced by backtracking.
The search space and aggregate update are unchanged: start with
half the previous accepted coefficient and double it until both
Taylor certificates below hold. The first certificate ensures
objective decrease; the second
controls the gradient error used by the convergence proof.
Neither the Hessian Lipschitz bound nor $L_0,D_0$ is supplied.

Choose a sufficiently small initial coefficient $H_{\mathrm{init}}>0$ and initialize
$H_{-1}=H_{\mathrm{init}}$.

At a nonstationary iterate $x^k$, start with
$\widehat H=H_{k-1}/2$ and compute the model step
$s$ with that trial coefficient. Accept it when both
the function certificate
\begin{equation}\label{eq:ftest}
 f(x^k+s)\le f(x^k)+\nabla f(x^k)^Ts
        +\tfrac12s^T\nabla^2f(x^k)s+\tfrac{\widehat H}3\|s\|^3,
\end{equation}
and the gradient certificate
\begin{equation}\label{eq:gtest}
 \|\nabla f(x^k+s)-\nabla f(x^k)-\nabla^2f(x^k)s\|
                  \le \widehat H\|s\|^2.
\end{equation}
hold. Otherwise double $\widehat H$ and recompute the trial step
using the cached reduced Hessian. On acceptance set $H_k=\widehat H$.
Algorithm~\ref{alg:adaptive-cubic} gives the complete procedure.

Evaluate the trial gradient only after the function test passes,
and retain it on acceptance. The gradient certificate is needed
for the aggregate-error bound; a function test alone is insufficient.
Both certificates use the cached HVPs, so additional trials require
no new curvature products.

Detailed proofs and trial counts are in
Appendix~\ref{app:backtracking}, including the precise initialization condition.

\begin{algorithm}[H]
\caption{Adaptive cubic SESOP}\label{alg:adaptive-cubic}
\begin{algorithmic}[1]
\Require Initial point $x^0$; initial coefficient $H_{\mathrm{init}}>0$
\State Evaluate $f(x^0)$ and $\nabla f(x^0)$
\State $z^0\gets\nabla f(x^0)$, $\omega_0\gets1$,
$H_{-1}\gets H_{\mathrm{init}}$
\For{$k=0,1,\ldots$}
 \If{$\nabla f(x^k)=0$}
  \State \Return $x^k$
 \EndIf
 \State $\mathcal S^k\gets\Span\{\nabla f(x^k),x^k-x^0,z^k\}$
 \State Form an orthonormal basis $Q^k$ of $\mathcal S^k$ and cache
 $B^k=(Q^k)^T[\nabla^2f(x^k)Q^k]$ \Comment{at most three HVPs}
 \State $\widehat H\gets H_{k-1}/2$
 \Repeat
  \State Solve the reduced cubic model with coefficient $\widehat H$,
  obtaining a trial step $s=Q^k a$
  \State Evaluate $f(x^k+s)$
  \If{the function certificate~\eqref{eq:ftest} holds}
   \State Evaluate $\nabla f(x^k+s)$
   \State $\mathrm{accept}\gets$ the truth value of
   certificate~\eqref{eq:gtest}
  \Else
   \State $\mathrm{accept}\gets\mathrm{false}$
  \EndIf
  \If{not $\mathrm{accept}$}
   \State $\widehat H\gets2\widehat H$
  \EndIf
 \Until{$\mathrm{accept}$}
 \State $H_k\gets\widehat H$, $s^k\gets s$,
 $x^{k+1}\gets x^k+s^k$ \Comment{retain the accepted value and gradient}
 \State $\mathcal V^{k+1}\gets\Span\{x^{k+1}-x^0,z^k\}$
 \State $h^{k+1}\gets\nabla f(x^{k+1})-
       \Pi_{\mathcal V^{k+1}}\nabla f(x^{k+1})$
 \State $\omega_{k+1}\gets(1+\sqrt{1+4\omega_k^2})/2$
 \State $z^{k+1}\gets z^k+\omega_{k+1}h^{k+1}$
\EndFor
\end{algorithmic}
\end{algorithm}

From this point onward, the same backtracking rule is available
for every subsequent algorithm, including the restarted variants,
finite model solves, and CaCuSESOP. Retain the accepted coefficient
across epoch boundaries, use both Taylor certificates, and keep
the additional model-residual test specified for each variant.
The global convergence orders and the positive-definite local results
are preserved.
The full local PL result, including finite trajectory length, also
holds when the accepted coefficients are eventually bounded away
from zero (Corollary~\ref{cor:adaptive-pl}). This is an analysis
condition, not a lower threshold in the backtracking rule.
The precise constants and proofs are given in
Appendix~\ref{app:backtracking} and the corresponding results below.

Since $Q^k$ is an orthonormal basis, every $u\in\mathcal S^k$ can
be written as $u=Q^k a$, with $\|Q^k a\|=\|a\|$. Thus the coordinate
solve in the listing is exactly the cubic subproblem~\eqref{eq:model}.

The removed component is $e^{k+1}$; the remaining component is
$h^{k+1}$. By construction, $h^{k+1}$ is orthogonal to both
$x^{k+1}-x^0$ and $z^k$. These are the two orthogonality relations
used in the SESOP proof. The correction is controlled by
\eqref{eq:preview-defect}, since
\[
 x^{k+1}-x^0=(x^k-x^0)+s^k\in\mathcal S^k,
 \qquad \mathcal V^{k+1}\subseteq\mathcal S^k.
\]

For uniform indexing in the analysis, set $h^0=\nabla f(x^0)$,
$e^0=0$, $z^{-1}=0$, and $\omega_{-1}=0$.
Then $z^k=\sum_{i=0}^k\omega_i h^i$.
If the algorithm stops, its output is a global minimizer by
Assumption~\ref{ass:convex}. For convergence statements, extend
$x^k$ and $z^k$ constantly, put all subsequent $s^k$, $h^k$, and
$e^k$ equal to zero, and continue the weight recurrence.

\subsubsection{Computing the step through a scalar equation}
\label{sec:reduced-model}

This subsection explains how to compute the model step by solving
one scalar equation. The next proposition gives that equation,
proves that its positive solution is unique, and shows how to
recover the step. It also shows why the solve needs no further
objective-oracle calls once the model coefficients have been computed.

\begin{proposition}[Reduced solve and oracle count]\label{prop:oracles}
Under Assumptions~\ref{ass:convex}--\ref{ass:hessian}, consider a
nonstationary iterate $x^k$ of Algorithm~\ref{alg:cubic}.
Let the columns of
$Q^k\in\R^{d\times r_k}$ form a Euclidean orthonormal basis of
$\mathcal S^k$, where $r_k=\dim\mathcal S^k\le3$. Define
\[
 B^k=(Q^k)^T\nabla^2f(x^k)Q^k.
\]
The step is $s^k=Q^k a^k$, where
\begin{equation}\label{eq:reduced-solution}
 a^k=-(B^k+\lambda_k I)^{-1}(Q^k)^T\nabla f(x^k),
\end{equation}
and $\lambda_k$ is the unique positive solution of
\begin{equation}\label{eq:secular}
 \lambda_k=H\bigl\|(B^k+\lambda_k I)^{-1}
                                  (Q^k)^T\nabla f(x^k)\bigr\|.
\end{equation}
It satisfies $\lambda_k\le\sqrt{H\|\nabla f(x^k)\|}$.
After initialization, each iteration needs one new gradient and
at most three HVPs, and no function evaluations.
\end{proposition}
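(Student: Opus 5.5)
The plan is to reduce the subproblem to coordinates and then use the standard Nesterov--Polyak characterization of global minimizers of cubic models. Write $g=(Q^k)^T\nabla f(x^k)\in\R^{r_k}$. Because $Q^k$ is orthonormal, $\|Q^ka\|=\|a\|$ and $\nabla f(x^k)^TQ^ka=g^Ta$. The model in line~\ref{line:model} is therefore $m(a)=g^Ta+\tfrac12a^TB^ka+\tfrac H3\|a\|^3$.

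By Assumption~\ref{ass:convex}, $\nabla^2f(x^k)\succeq0$, so $B^k\succeq0$. Consequently $m$ is strictly convex: a convex quadratic plus the strictly convex $\tfrac H3\|a\|^3$. It is also coercive, so it has a unique minimizer $a^k$. Since $\nabla f(x^k)\in\mathcal S^k$ and is nonzero, we have $\|g\|=\|\nabla f(x^k)\|>0$.

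The stationarity condition reads $(B^k+H\|a\|I)a=-g$. Hence $a^k\ne0$, because $a=0$ would force $g=0$. Setting $\lambda_k=H\|a^k\|>0$, the matrix $B^k+\lambda_kI$ is positive definite. This gives \eqref{eq:reduced-solution}, and taking norms gives \eqref{eq:secular}.

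For uniqueness of the positive root, consider $\phi(\lambda)=H\|(B^k+\lambda I)^{-1}g\|$ on $(0,\infty)$. In an eigenbasis of $B^k$ with eigenvalues $\beta_i\ge0$,
\[
\phi(\lambda)^2=H^2\sum_i g_i^2/(\beta_i+\lambda)^2 ,
\]
which is strictly decreasing because $g\ne0$. Therefore $\lambda-\phi(\lambda)$ is strictly increasing. It is negative near $0^+$ (where $\phi$ stays positive) and tends to $+\infty$, so there is exactly one root. Any positive root $\lambda$ yields, through \eqref{eq:reduced-solution}, a stationary point of the strictly convex $m$, so it coincides with $a^k$.

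The bound follows from $\beta_i\ge0$: we get $\phi(\lambda)\le H\|g\|/\lambda$. At the root, $\lambda_k^2\le H\|g\|=H\|\nabla f(x^k)\|$.

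For the oracle count, the model coefficients are $g$, which is available from the gradient computed at the end of the previous iteration (or at initialization), together with $B^k=(Q^k)^T[\nabla^2f(x^k)Q^k]$. The latter needs $r_k\le3$ HVPs, one per column. Forming $Q^k$, solving the scalar equation, and recovering $a^k$ use only these reduced quantities. The step $x^{k+1}=x^k+s^k$ involves no test, so no function value is needed. The single new oracle call is $\nabla f(x^{k+1})$, which also serves the correction step and the next iteration.

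The main subtlety, rather than an obstacle, is making sure the positivity of $\lambda_k$ and the invertibility of $B^k+\lambda I$ do not require $B^k\succ0$. Convexity gives only semidefiniteness, and the argument relies on $\lambda>0$ together with $g\ne0$. I will also note explicitly that the step needs no bounded-level-set hypothesis; Assumptions~\ref{ass:level}--\ref{ass:hessian} enter only to make the statement well posed inside Algorithm~\ref{alg:cubic}.
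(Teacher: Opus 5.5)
Your proof is correct and follows essentially the same route as the paper. You reduce to coordinates through the orthonormal basis, use $B^k\succeq0$ with strict convexity and coercivity to get a unique minimizer, read off $\lambda_k=H\|a^k\|>0$ from stationarity, prove uniqueness of the positive root by monotonicity in an eigenbasis, bound $\lambda_k$ via $\|(B^k+\lambda I)^{-1}\|\le1/\lambda$, and count one HVP per basis column plus the single new gradient; your extra remark that any positive root of the secular equation recovers the unique model minimizer is a small but welcome completion of the uniqueness step.
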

\begin{proof}
Since $Q^k$ is orthonormal, substituting $s=Q^k a$ into the model gives
\[
 \nabla f(x^k)^TQ^k a+\tfrac12 a^TB^k a+\tfrac H3\|a\|^3.
\]
Convexity gives $B^k\succeq0$. Since $H>0$, this reduced objective
is strictly convex and coercive, so it has a unique minimizer.
Its first-order condition is
$(B^k+H\|a^k\|I)a^k=-(Q^k)^T\nabla f(x^k)$.
The projected gradient is nonzero because the full gradient is
nonzero and belongs to $\mathcal S^k$. Thus $a^k\ne0$ and
$\lambda_k=H\|a^k\|>0$.
Also $B^k\succeq0$, so $B^k+\lambda_k I$ is invertible.
This proves \eqref{eq:reduced-solution}--\eqref{eq:secular}.

To verify uniqueness directly, diagonalize $B^k$.
The norm on the right of \eqref{eq:secular} is a strictly decreasing
continuous function of positive $\lambda_k$, with positive
(possibly infinite) limit at zero and limit zero at infinity.
The left side increases strictly from zero to infinity, so there
is exactly one intersection. Since
$\|(B^k+\lambda_k I)^{-1}\|\le1/\lambda_k$, the equation also gives
$\lambda_k^2\le H\|(Q^k)^T\nabla f(x^k)\|=H\|\nabla f(x^k)\|$.

The directions defining $\mathcal S^k$ are available before the model
solve. One HVP for each column of $Q^k$ determines
$\nabla^2f(x^k)Q^k$ and hence $B^k$. Forming
$(Q^k)^T\nabla f(x^k)$ uses the current gradient already in memory.
The scalar solve and all reduced linear algebra then use only
these coefficients, with no further objective-oracle calls.
The single new gradient is $\nabla f(x^{k+1})$, used in the
correction and in the next iteration. With the supplied $H$, the
algorithm requires no function-value test.
\end{proof}

The bound of three HVPs comes from the three directions in
line~\ref{line:space} of Algorithm~\ref{alg:cubic}; adding the previous
step raises this bound to four. At $k=0$, the only nonzero direction is
$\nabla f(x^0)$, so one HVP suffices. At later iterations, any
linearly dependent directions are discarded before constructing
$Q^k$. A product computed at an earlier point is not being reused
as an exact current-Hessian product.

We first analyze exact model minimization and exact projections.
Section~\ref{sec:inexact-model} later replaces exact minimization by
a finite residual test while retaining exact reduced coefficients
and projections.
Equation~\eqref{eq:secular} characterizes the minimizer but does not
assert an elementary closed-form solution or a fixed arithmetic
cost. Its numerical solution, basis construction, and vector
operations are additional computation. The fixed bound above
concerns calls to the original objective's gradient and HVP oracles.

\subsection{The one-step guarantees used throughout}
\label{sec:one-step}

This subsection establishes the properties of a single iteration.
Section~\ref{sec:global} will combine these properties across
iterations to prove the global convergence rate.

We first prove that a model step stays in the initial sublevel set.
This justifies the Taylor estimates used in Section~\ref{sec:descent}.
We then use the decrease of the objective to bound the gradient
correction and the current gradient.

\begin{lemma}[Model step and level-set invariance]\label{lem:invariance}
Under Assumptions~\ref{ass:convex}--\ref{ass:hessian}, let
$x\in\mathcal C_0$, let $S$ be a linear subspace containing
$\nabla f(x)$, and let $s$ be defined by the cubic subproblem
\eqref{eq:model}.  The minimizer exists and is unique, and it satisfies
\begin{equation}\label{eq:model-stationarity}
 \Pi_S\bigl(\nabla f(x)+\nabla^2f(x)s+H\|s\|s\bigr)=0.
\end{equation}
Moreover, $[x,x+s]\subseteq\mathcal C_0$, and
\begin{equation}\label{eq:two-decreases}
 f(x)-f(x+s)
 \ge\tfrac12s^T\nabla^2f(x)s+\tfrac{2H}{3}\|s\|^3.
\end{equation}
If $\nabla f(x)\ne0$, then $s\ne0$ and the decrease is strict.
\end{lemma}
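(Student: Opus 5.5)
Existence, uniqueness, and the stationarity condition \eqref{eq:model-stationarity} follow the argument of Proposition~\ref{prop:oracles}. Writing $u=Qa$ for an orthonormal basis $Q$ of $S$ gives a reduced objective whose quadratic part uses $Q^T\nabla^2f(x)Q\succeq0$ (convexity at $x$, which is available without any level-set restriction). With $H>0$ this objective is strictly convex and coercive, so it has a unique minimizer, and its gradient vanishing is exactly \eqref{eq:model-stationarity}. Taking the inner product with $s\in S$ gives
\[
\nabla f(x)^Ts=-s^T\nabla^2f(x)s-H\|s\|^3 .
\]
Hence the model value at $s$ equals $-\tfrac12 s^T\nabla^2f(x)s-\tfrac{2H}{3}\|s\|^3$. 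If $\nabla f(x)\neq0$, then $\Pi_S\nabla f(x)=\nabla f(x)\ne0$, so $s=0$ violates stationarity and $s\neq0$.

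The main obstacle is the inclusion $[x,x+s]\subseteq\mathcal C_0$. The Taylor bound \eqref{eq:taylor-upper} is only justified on segments inside $\mathcal C_0$, so it cannot be used directly to show that $x+s$ lies there. I would use a continuity argument along the path $t\mapsto x+ts$, $t\in[0,1]$. Define $\phi(t)=f(x+ts)$ and let $t^\ast=\sup\{t\in[0,1]:[x,x+ts]\subseteq\mathcal C_0\}$. Since $\mathcal C_0$ is closed, $[x,x+t^\ast s]\subseteq\mathcal C_0$, and the Taylor bound applies on this segment. For $t\le t^\ast$ we get
\[
\phi(t)-f(x)\le t\,\nabla f(x)^Ts+\tfrac{t^2}{2}s^T\nabla^2f(x)s+\tfrac{Ht^3}{3}\|s\|^3 .
\]
Substituting the identity for $\nabla f(x)^Ts$ turns the right side into
\[
-\bigl(t-\tfrac{t^2}{2}\bigr)s^T\nabla^2f(x)s-\bigl(t-\tfrac{t^3}{3}\bigr)H\|s\|^3 .
\]
This is strictly negative for $t\in(0,1]$ when $s\neq0$. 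Thus $f(x+t^\ast s)<f(x)\le f(x^0)$ whenever $t^\ast>0$.

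It remains to exclude $t^\ast<1$. If $t^\ast=0$, I would use $\phi'(0)=\nabla f(x)^Ts<0$, which holds because $s\neq0$ and $H\|s\|^3>0$. Then $\phi(t)<f(x)\le f(x^0)$ for all small $t>0$. Convexity of $\mathcal C_0$ (a sublevel set of a convex function) together with $x\in\mathcal C_0$ places the initial portion of the segment in $\mathcal C_0$, so $t^\ast>0$. If $0<t^\ast<1$, then $f(x+t^\ast s)<f(x^0)$ strictly, and by continuity nearby points $x+ts$ with $t$ slightly larger than $t^\ast$ still have value below $f(x^0)$. Again by convexity of $\mathcal C_0$ the segment extends beyond $t^\ast$, contradicting maximality. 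Hence $t^\ast=1$.

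Evaluating the Taylor estimate at $t=1$, which is now legitimate, gives \eqref{eq:two-decreases}. The decrease is strict because $s\neq0$ makes $\tfrac{2H}{3}\|s\|^3>0$. In the case $\nabla f(x)=0$ we have $s=0$ and everything holds trivially.
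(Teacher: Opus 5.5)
Your proof is correct and follows essentially the same route as the paper. You use strict convexity and coercivity of the reduced model, the identity $\nabla f(x)^Ts=-s^T\nabla^2f(x)s-H\|s\|^3$, and a first-exit continuity argument along $t\mapsto x+ts$ that relies on the strict negativity of the cubic expression at $ts$ for $t\in(0,1]$. The differences are cosmetic: you work with the supremum $t^\ast$ of admissible segment lengths, handling $t^\ast=0$ through $\phi'(0)<0$, where the paper takes the infimum of exit times; and you obtain $s\neq0$ directly by testing stationarity at $s=0$.
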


\begin{proof}
Convexity gives $\nabla^2f(x)\succeq0$.  Hence the objective in
\eqref{eq:model} is strictly convex and coercive on $S$, owing to the term
$H\|u\|^3/3$.  It has a unique minimizer, and its first-order condition is
\eqref{eq:model-stationarity}.  Taking the inner product of that condition
with $s\in S$ gives
\begin{equation}\label{eq:stationarity-along-step}
 \nabla f(x)^Ts=-s^T\nabla^2f(x)s-H\|s\|^3.
\end{equation}

If $\nabla f(x)=0$, uniqueness gives $s=0$, and all the conclusions are
immediate.  Suppose therefore that $\nabla f(x)\ne0$.  Because
$-\nabla f(x)\in S$ is a strict descent direction for the cubic objective
at zero, its minimizer is nonzero.  Equation
\eqref{eq:stationarity-along-step} then gives
$\nabla f(x)^Ts<0$, so differentiability implies
$f(x+ts)<f(x)\le f(x^0)$ for all sufficiently small $t>0$.

For every $0<t\le1$, substituting
\eqref{eq:stationarity-along-step} into the full cubic expression at $ts$
gives
\begin{align*}
 &t\nabla f(x)^Ts
 +\frac{t^2}{2}s^T\nabla^2f(x)s
 +\frac{Ht^3}{3}\|s\|^3\\
 &\qquad=-t(1-t/2)s^T\nabla^2f(x)s
          -Ht(1-t^2/3)\|s\|^3<0.
\end{align*}
Suppose that the segment from $x$ to $x+s$ leaves $\mathcal C_0$, and set
\[
 t_{\rm exit}=\inf\{t\in[0,1]:f(x+ts)>f(x^0)\}.
\]
The initial strict decrease gives $t_{\rm exit}>0$.  Continuity gives
$f(x+t_{\rm exit}s)=f(x^0)$, and the segment preceding this point is
contained in $\mathcal C_0$.  We may therefore apply
\eqref{eq:taylor-upper} on that segment.  The preceding strict inequality
then yields
\begin{align*}
 f(x+t_{\rm exit}s)
 &\le f(x)+t_{\rm exit}\nabla f(x)^Ts
   +\frac{t_{\rm exit}^2}{2}s^T\nabla^2f(x)s
   +\frac{Ht_{\rm exit}^3}{3}\|s\|^3\\
 &<f(x)\le f(x^0),
\end{align*}
contradicting the definition of $t_{\rm exit}$.  Thus
$[x,x+s]\subseteq\mathcal C_0$.

The Taylor upper bound is now valid for the full step.  Using
\eqref{eq:stationarity-along-step},
\begin{align*}
 f(x+s)-f(x)
 &\le\nabla f(x)^Ts+\tfrac12s^T\nabla^2f(x)s
                      +\tfrac H3\|s\|^3\\
 &=-\tfrac12s^T\nabla^2f(x)s-\tfrac{2H}{3}\|s\|^3.
\end{align*}
Rearranging proves \eqref{eq:two-decreases}.  Its right-hand side is
strictly positive when $s\ne0$.
\end{proof}

The next lemma bounds the component of the new gradient along the
search space. This controls the size of the correction made before
updating the aggregate, using the objective decrease just proved.

\begin{lemma}[Decrease controls the projected gradient]
\label{lem:defect}
Under Assumptions~\ref{ass:convex}--\ref{ass:hessian}, let $x$, $S$, and
$s$ be as in Lemma~\ref{lem:invariance}.  Then
\begin{equation}\label{eq:descent-defect}
 \|\Pi_S\nabla f(x+s)\|
 \le2H\|s\|^2
 \le(18H)^{1/3}\bigl(f(x)-f(x+s)\bigr)^{2/3}.
\end{equation}
\end{lemma}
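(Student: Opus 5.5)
The plan is to follow the three-step sketch of Section~\ref{sec:descent}, now justified by Lemma~\ref{lem:invariance}. That lemma gives the segment inclusion $[x,x+s]\subseteq\mathcal C_0$, so the gradient Taylor bound \eqref{eq:taylor-gradient} applies to the full step:
\[
 \nabla f(x+s)=\nabla f(x)+\nabla^2f(x)s+r, \qquad \|r\|\le H\|s\|^2.
\]

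For the first inequality in \eqref{eq:descent-defect}, apply $\Pi_S$ to this identity and subtract the stationarity condition \eqref{eq:model-stationarity}. This gives
\[
 \Pi_S\nabla f(x+s)=\Pi_S r-H\|s\|\,\Pi_S s .
\]
Since $s\in S$, we have $\Pi_S s=s$, and since $\Pi_S$ is an orthogonal projection it is nonexpansive. The triangle inequality then gives
\[
 \|\Pi_S\nabla f(x+s)\|\le H\|s\|^2+H\|s\|^2=2H\|s\|^2 .
\]

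For the second inequality, drop the nonnegative quadratic term in \eqref{eq:two-decreases}. The term is nonnegative because $\nabla^2f(x)\succeq0$ by convexity. This leaves $\Delta:=f(x)-f(x+s)\ge\tfrac{2H}{3}\|s\|^3$, that is, $\|s\|^2\le(3\Delta/(2H))^{2/3}$. Substituting,
\[
 2H\|s\|^2\le 2H\Bigl(\frac{3\Delta}{2H}\Bigr)^{2/3}
 =\bigl(8H^3\cdot\tfrac{9}{4H^2}\bigr)^{1/3}\Delta^{2/3}
 =(18H)^{1/3}\Delta^{2/3}.
\]
The case $\nabla f(x)=0$ is trivial: then $s=0$ and both sides vanish.

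There is no real obstacle here. The only point needing care is that the Taylor gradient estimate requires the whole segment to lie in $\mathcal C_0$, which is exactly what Lemma~\ref{lem:invariance} supplies. The remainder is the constant arithmetic above.
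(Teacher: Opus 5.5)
Your proposal is correct and follows the same route as the paper's proof: use the segment inclusion from Lemma~\ref{lem:invariance} to justify the gradient Taylor bound, subtract the projected stationarity condition \eqref{eq:model-stationarity} to get $2H\|s\|^2$, then drop the nonnegative quadratic term in \eqref{eq:two-decreases} and raise the cubic bound to the power $2/3$. Your constant arithmetic, $2H(3/(2H))^{2/3}=(18H)^{1/3}$, matches the paper's.
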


\begin{proof}
Lemma~\ref{lem:invariance} places the entire step segment in
$\mathcal C_0$, so the gradient Taylor estimate
\eqref{eq:taylor-gradient} applies.  Subtracting
\eqref{eq:model-stationarity} from the projected new gradient gives
\begin{align*}
 \|\Pi_S\nabla f(x+s)\|
 &\le\|\nabla f(x+s)-\nabla f(x)-\nabla^2f(x)s\|
       +H\|s\|^2\\
 &\le2H\|s\|^2.
\end{align*}
The cubic term in \eqref{eq:two-decreases} gives
\[
 \|s\|^3\le\frac{3}{2H}\bigl(f(x)-f(x+s)\bigr).
\]
Taking the power $2/3$ and multiplying by $2H$ proves the second
inequality because
$2H(3/(2H))^{2/3}=(18H)^{1/3}$.
\end{proof}

We also need a bound on the current gradient. The next lemma shows
that the objective decrease controls its squared norm; this will
let us bound the gradient aggregate $z^k$ in the global proof.

\begin{lemma}[Gradient decrease]\label{lem:gradient-decrease}
Under Assumptions~\ref{ass:convex}--\ref{ass:hessian}, let $x$, $S$, and
$s$ be as in Lemma~\ref{lem:invariance}.  Suppose
$\nabla f(x^0)\ne0$. Then $L_0+\sqrt{2HL_0D_0}>0$, and
\begin{equation}\label{eq:gradient-budget}
 f(x)-f(x+s)\ge\frac{\|\nabla f(x)\|^2}{3\bigl(L_0+\sqrt{2HL_0D_0}\bigr)}.
\end{equation}
\end{lemma}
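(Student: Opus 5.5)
The plan is to compare the model minimizer with an explicit scaled-gradient competitor in $S$, and then to bound $\|\nabla f(x)\|$ using the level-set constants \eqref{eq:level-constants}. Write $g=\nabla f(x)$ and $M=L_0+\sqrt{2HL_0D_0}$. If $g=0$, Lemma~\ref{lem:invariance} gives $s=0$, and the inequality \eqref{eq:gradient-budget} is trivial; so assume $g\neq0$.

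\textbf{Step 1: positivity of $M$ and a bound on $\|g\|$.} Since $\mathcal C_0$ is convex and contains both $y$ and $x^\star$, the segment $[x^\star,y]$ lies in $\mathcal C_0$ for every $y\in\mathcal C_0$. Using $\nabla f(x^\star)=0$,
\[
 \nabla f(y)=\int_0^1\nabla^2f\bigl(x^\star+\tau(y-x^\star)\bigr)(y-x^\star)\,d\tau .
\]
Hence $\|\nabla f(y)\|\le L_0\|y-x^\star\|\le L_0D_0$ for all $y\in\mathcal C_0$. Applying this at $y=x^0$ with $\nabla f(x^0)\neq0$ forces $L_0>0$, so $M>0$. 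Applying it at $y=x$ gives $\|g\|\le L_0D_0$.

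\textbf{Step 2: model comparison.} By Lemma~\ref{lem:invariance}, $[x,x+s]\subseteq\mathcal C_0$, so the Taylor bound \eqref{eq:taylor-upper} applies to the step. Denote by $m(u)$ the cubic objective in \eqref{eq:model}. Then $f(x+s)\le f(x)+m(s)$. Because $g\in S$, the competitor $u=-tg$ is admissible, and $m(s)\le m(-tg)$. Bounding $g^T\nabla^2f(x)g\le L_0\|g\|^2$ gives
\[
 f(x)-f(x+s)\ge t\|g\|^2\Bigl(1-\tfrac{L_0t}{2}-\tfrac{Ht^2\|g\|}{3}\Bigr).
\]

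\textbf{Step 3: choice of $t$.} Take $t=1/M$. Then $L_0t\le1$, since $M\ge L_0$. For the cubic term, Step~1 and $M^2\ge2HL_0D_0$ give
\[
 Ht^2\|g\|\le \frac{HL_0D_0}{M^2}\le\frac{HL_0D_0}{2HL_0D_0}=\frac12 .
\]
If $D_0=0$, the bound $\|g\|\le L_0D_0$ already gives $\|g\|=0$, so the case $g\neq0$ does not arise. The bracket is therefore at least $1-\tfrac12-\tfrac16=\tfrac13$, which yields \eqref{eq:gradient-budget}.

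The only delicate point is the control of the cubic term, which requires the a priori bound $\|g\|\le L_0D_0$; the factor $\sqrt{2HL_0D_0}$ in $M$ is chosen precisely so that this term is at most $1/6$.
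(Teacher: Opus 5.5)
Your proof is correct and follows the paper's argument essentially step for step: the same bound $\|\nabla f(x)\|\le L_0D_0$ obtained by integrating the Hessian along $[x^\star,x]\subseteq\mathcal C_0$, the same competitor $-\nabla f(x)/\bigl(L_0+\sqrt{2HL_0D_0}\bigr)\in S$ in the cubic model, and the same bounds of $1/2$ and $1/6$ on the quadratic and cubic terms. Your separate treatment of the cases $g=0$ and $D_0=0$ is harmless. The paper avoids these cases by noting directly that $\nabla f(x^0)\ne0$ forces $L_0D_0>0$.
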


\begin{proof}
For this proof, abbreviate the gradient-decrease coefficient by
\begin{equation}\label{eq:decrease-constant}
 \mathcal L:=L_0+\sqrt{2HL_0D_0}.
\end{equation}
The segment from $x^\star$ to $x\in\mathcal C_0$ lies in the convex set
$\mathcal C_0$.  Since $\nabla f(x^\star)=0$, integration of the Hessian
along this segment gives
\begin{equation}\label{eq:gradient-level-bound}
 \|\nabla f(x)\|\le L_0\|x-x^\star\|\le L_0D_0.
\end{equation}
In particular, applying this bound at $x^0$ shows that
$\nabla f(x^0)\ne0$ implies $L_0D_0>0$, and hence $\mathcal L>0$.

By Lemma~\ref{lem:invariance}, the Taylor upper bound applies to the
accepted step.  By the minimizing property in \eqref{eq:model}, its full
cubic expression is no larger than the value at
$-\nabla f(x)/\mathcal L\in S$.  Consequently,
\begin{align*}
 f(x+s)-f(x)
 &\le\nabla f(x)^Ts+\tfrac12s^T\nabla^2f(x)s
                      +\tfrac H3\|s\|^3\\
 &\le-\frac{\|\nabla f(x)\|^2}{\mathcal L}
 +\frac{\nabla f(x)^T\nabla^2f(x)\nabla f(x)}{2\mathcal L^2}
 +\frac{H\|\nabla f(x)\|^3}{3\mathcal L^3}\\
 &\le-\frac{\|\nabla f(x)\|^2}{\mathcal L}
 \left(1-\frac{L_0}{2\mathcal L}
          -\frac{H\|\nabla f(x)\|}{3\mathcal L^2}\right).
\end{align*}
The last inequality uses
$\|\nabla^2f(x)\|\le L_0$.  Also $L_0/(2\mathcal L)\le1/2$, and
$\mathcal L^2\ge2HL_0D_0\ge2H\|\nabla f(x)\|$ by
\eqref{eq:gradient-level-bound}, so
$H\|\nabla f(x)\|/(3\mathcal L^2)\le1/6$.  The last parenthesis is
therefore at least $1/3$, which gives
\[
 f(x+s)-f(x)\le-\frac{\|\nabla f(x)\|^2}{3\mathcal L}.
\]
Rearranging proves \eqref{eq:gradient-budget}.  Only the accepted step
uses a Taylor estimate; the comparison point
$x-\nabla f(x)/\mathcal L$ is used solely to compare explicit cubic
expressions and need not belong to $\mathcal C_0$.
\end{proof}

To sum the one-step bounds, we need the identities satisfied by
the weights in the aggregate. The next lemma gives these identities
and the growth of the weights with the iteration number.

\begin{lemma}[Weight identities]\label{lem:weights}
The weights in Algorithm~\ref{alg:cubic} satisfy, for $k\ge0$,
\begin{equation}\label{eq:weights}
 \omega_k^2-\omega_{k-1}^2=\omega_k,\qquad
 \sum_{i=0}^k\omega_i=\omega_k^2,\qquad
 \frac{k+2}{2}\le\omega_k\le k+1.
\end{equation}
For every $k\ge0$, $1/2\le\omega_{k+1}-\omega_k<1$.
\end{lemma}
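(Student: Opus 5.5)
The plan is to work directly from the recurrence $\omega_{k+1}=(1+\sqrt{1+4\omega_k^2})/2$ with $\omega_0=1$, together with the convention $\omega_{-1}=0$. The first observation is that $\omega_{k+1}$ is the positive root of $t^2-t-\omega_k^2=0$. Hence $\omega_{k+1}^2-\omega_{k+1}=\omega_k^2$ for $k\ge0$. For $k=0$ the identity reads $\omega_0^2-\omega_{-1}^2=1=\omega_0$, which holds by the convention. This gives the first identity for all $k\ge0$. Telescoping it from $i=0$ to $k$ gives $\sum_{i=0}^k\omega_i=\omega_k^2-\omega_{-1}^2=\omega_k^2$, which is the second identity.

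Next I will prove the increment bounds, since the growth bounds follow from them. Write $\omega_{k+1}-\omega_k=\bigl(1+\sqrt{1+4\omega_k^2}-2\omega_k\bigr)/2$. Since $\omega_k\ge1>0$, we have $2\omega_k<\sqrt{1+4\omega_k^2}\le 2\omega_k+1$, where the second inequality holds because $(2\omega_k+1)^2=1+4\omega_k+4\omega_k^2$. This gives $1/2<\omega_{k+1}-\omega_k\le1$, and the upper bound is in fact strict because $4\omega_k>0$. Thus $1/2\le\omega_{k+1}-\omega_k<1$ for every $k\ge0$. Positivity of $\omega_k$ is needed here, and it follows by a trivial induction since each $\omega_{k+1}\ge1/2+\omega_k$.

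Summing the increments from $\omega_0=1$ gives $1+k/2\le\omega_k\le1+k$, that is, $(k+2)/2\le\omega_k\le k+1$. The case $k=0$ is an equality on the left and on the right, and the strict upper increment gives strictness for $k\ge1$, which is not needed.

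None of these steps is a genuine obstacle. The only point requiring care is the index $k=0$ in the first identity, which relies on the convention $\omega_{-1}=0$ introduced after Algorithm~\ref{alg:cubic}. That convention is consistent with the recurrence, since $(1+\sqrt{1+0})/2=1=\omega_0$. Consequently, the quadratic relation $\omega_{k}^2-\omega_k=\omega_{k-1}^2$ holds uniformly for all $k\ge0$, and the telescoping can start at $i=0$ without a separate base term.
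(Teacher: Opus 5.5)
Your proposal is correct and follows essentially the same route as the paper: read the recurrence as the positive root of $\omega_{k+1}^2-\omega_{k+1}-\omega_k^2=0$, telescope using the convention $\omega_{-1}=0$, and bracket $\sqrt{1+4\omega_k^2}$ between $2\omega_k$ and $2\omega_k+1$ to get the increment bounds. Summing those increments from $\omega_0=1$ then gives the growth bounds, exactly as in the paper.
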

\begin{proof}
The recurrence is the positive root of
$\omega_{k+1}^2-\omega_{k+1}-\omega_k^2=0$.
The difference identity follows, including $k=0$ under
$\omega_{-1}=0$. Summation proves the cumulative identity.
For $t>0$, the inequalities
$2t\le\sqrt{1+4t^2}<2t+1$ imply that the next weight lies between
$t+1/2$ and $t+1$. Starting from $\omega_0=1$ gives the growth bounds.
\end{proof}

We now apply the model-step bounds to the search space and gradient
correction in Algorithm~\ref{alg:cubic}. The following proposition
collects the orthogonality relations and the two inequalities that
Section~\ref{sec:global} will use.

\begin{proposition}[Properties of one iteration]\label{prop:iteration}
Under Assumptions~\ref{ass:convex}--\ref{ass:hessian},
Algorithm~\ref{alg:cubic} is well defined, remains in $\mathcal C_0$,
and satisfies $f(x^{k+1})\le f(x^k)$. Its corrected gradient obeys
\begin{equation}\label{eq:orthogonalities}
 h^k\perp(x^k-x^0),\qquad h^k\perp z^{k-1},\qquad
 \|h^k\|\le\|\nabla f(x^k)\|.
\end{equation}
If $\nabla f(x^0)\ne0$, then
\begin{align}
 \|\nabla f(x^k)\|^2
 &\le3\bigl(L_0+\sqrt{2HL_0D_0}\bigr)\bigl(f(x^k)-f(x^{k+1})\bigr),
 \label{eq:algorithm-gradient}\\
 \|e^{k+1}\|
 &\le(18H)^{1/3}\bigl(f(x^k)-f(x^{k+1})\bigr)^{2/3}.
 \label{eq:defect-budget}
\end{align}
\end{proposition}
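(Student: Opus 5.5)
The plan is induction on $k$, applying Lemmas~\ref{lem:invariance}, \ref{lem:defect}, and~\ref{lem:gradient-decrease} at each iterate with $x=x^k$ and $S=\mathcal S^k$. The inductive hypothesis is $x^k\in\mathcal C_0$; the base case holds since $x^0\in\mathcal C_0$ by definition.

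\textbf{Well-definedness and monotonicity.} If $\nabla f(x^k)=0$, the algorithm stops and the constant extension trivially satisfies everything, with $s^k=0$ and $e^{k+1}=0$. Otherwise $\nabla f(x^k)\in\mathcal S^k$ by line~\ref{line:space}, so Lemma~\ref{lem:invariance} applies. It gives the following:
\begin{itemize}[leftmargin=*]
 \item the model step $s^k=Q^ka^k$ exists and is unique (the coordinate problem coincides with \eqref{eq:model} because $Q^k$ is orthonormal, as in Proposition~\ref{prop:oracles});
 \item $[x^k,x^{k+1}]\subseteq\mathcal C_0$;
 \item $f(x^{k+1})<f(x^k)$.
\end{itemize}
This closes the induction for membership in $\mathcal C_0$ and monotonicity. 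The remaining operations (gradient, projection, weight update) are always defined.

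\textbf{Orthogonality relations.} For $k\ge1$, the vector $h^k=\nabla f(x^k)-\Pi_{\mathcal V^k}\nabla f(x^k)$ is the component orthogonal to $\mathcal V^k=\Span\{x^k-x^0,z^{k-1}\}$. Hence $h^k\perp(x^k-x^0)$ and $h^k\perp z^{k-1}$, and since $h^k=(I-\Pi_{\mathcal V^k})\nabla f(x^k)$ is an orthogonal projection, $\|h^k\|\le\|\nabla f(x^k)\|$. For $k=0$, use the conventions $h^0=\nabla f(x^0)$ and $z^{-1}=0$, together with $x^0-x^0=0$; both orthogonalities are then trivial and the norm bound holds with equality. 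Beyond a stopping index, $h^k=0$ and everything is trivial.

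\textbf{The two inequalities.} Inequality \eqref{eq:algorithm-gradient} is exactly Lemma~\ref{lem:gradient-decrease} with $x=x^k$ and $s=s^k$. It is also trivial when $\nabla f(x^k)=0$, because then both sides vanish after extension.

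For \eqref{eq:defect-budget}, the key observation is that $\mathcal V^{k+1}\subseteq\mathcal S^k$. This holds because $z^k\in\mathcal S^k$ and $x^{k+1}-x^0=(x^k-x^0)+s^k\in\mathcal S^k$. Nested projections then give
\[
\Pi_{\mathcal V^{k+1}}=\Pi_{\mathcal V^{k+1}}\Pi_{\mathcal S^k},
\qquad\text{so}\qquad
\|e^{k+1}\|\le\|\Pi_{\mathcal S^k}\nabla f(x^{k+1})\|.
\]
Lemma~\ref{lem:defect} now bounds the right-hand side by $(18H)^{1/3}(f(x^k)-f(x^{k+1}))^{2/3}$.

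There is no real obstacle here. The only point needing care is this nesting argument, together with checking the degenerate index conventions ($k=0$ and post-termination) consistently.
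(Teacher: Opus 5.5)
Your proposal is correct and follows the paper's proof essentially step for step. Both arguments apply Lemma~\ref{lem:invariance} inductively, read off the orthogonalities from $h^k=(I-\Pi_{\mathcal V^k})\nabla f(x^k)$ (with the $k=0$ conventions), cite Lemma~\ref{lem:gradient-decrease} for the gradient bound, and use $\mathcal V^{k+1}\subseteq\mathcal S^k$ together with Lemma~\ref{lem:defect} for the correction bound.
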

\begin{proof}
Apply Lemma~\ref{lem:invariance} inductively, since
$\nabla f(x^k)\in\mathcal S^k$ and $x^0\in\mathcal C_0$.
This proves existence and uniqueness of every model step, invariance,
and monotonicity. Orthogonal projections exist for all subspaces,
including those with redundant or zero directions.
For $k\ge1$, \eqref{eq:orthogonalities} follows from
$h^k=(I-\Pi_{\mathcal V^k})\nabla f(x^k)$; for $k=0$ it follows from
$x^k-x^0=0$ and $z^{-1}=0$.
The gradient estimate is Lemma~\ref{lem:gradient-decrease}.
Finally, $\mathcal V^{k+1}\subseteq\mathcal S^k$, and therefore
\[
 \|e^{k+1}\|=\|\Pi_{\mathcal V^{k+1}}\nabla f(x^{k+1})\|
 \le\|\Pi_{\mathcal S^k}\nabla f(x^{k+1})\|.
\]
Lemma~\ref{lem:defect} now gives \eqref{eq:defect-budget}.
\end{proof}

\section{Baseline convergence}
\label{sec:baseline}

We now use the one-step bounds in two different ways. The global
argument controls the corrected SESOP aggregate without any local
curvature assumption. The local arguments then show what improves near
a solution, first for a nondegenerate minimizer and then under a PL
condition that allows a singular Hessian.

\subsection{Global convergence}
\label{sec:global}

We now combine the one-step guarantees from Section~\ref{sec:one-step}
to bound the objective gap after any number of iterations.
The gradient bound will control the aggregate, and the correction
bound will control the error caused by replacing the SESOP inner solve.

The proof has three steps. First, we sum the one-step inequalities
over an interval of iterations. Next, we apply this sum from the
initial point to bound the accumulated terms. Finally, we use those
bounds on shorter intervals where the weights approximately double;
this gives the recurrence needed for the final rate.

Throughout this section, Assumptions~\ref{ass:convex}--\ref{ass:hessian}
hold, and we fix the minimizer $x^\star$ used in
\eqref{eq:level-constants}.  We first consider the nonstationary case
$\nabla f(x^0)\ne0$; the stationary case is immediate and will be included
in the final theorem.  Proposition~\ref{prop:iteration} shows that all
iterates are well defined, remain in $\mathcal C_0$, and decrease the
objective.  It is therefore meaningful to introduce the objective error
and the decrease produced by one iteration,
\[
 \Delta_k:=f(x^k)-f_\star,
 \qquad
 d_k:=f(x^k)-f(x^{k+1})=\Delta_k-\Delta_{k+1}\ge0.
\]
For the proofs below, write
$\mathcal L=L_0+\sqrt{2HL_0D_0}$; this is a coefficient in the
one-step estimate, not a smoothness parameter supplied to the algorithm.
The two estimates from Proposition~\ref{prop:iteration} that drive the
global proof are
\begin{equation}\label{eq:global-two-budgets}
 \|\nabla f(x^k)\|^2\le3\mathcal L d_k,
 \qquad
 \|e^{k+1}\|\le(18H)^{1/3}d_k^{2/3},
\end{equation}
The coefficient $\mathcal L$ is positive in the nonstationary
case considered here. The vector $e^{k+1}$ is the component
removed from the new gradient before updating the aggregate.
The first inequality is \eqref{eq:algorithm-gradient}, and the
second is \eqref{eq:defect-budget}.

For the proofs below, collect the two scales into
\begin{equation}\label{eq:proof-scale}
 C_{\mathrm{global}}
 :=3\mathcal L\|x^0-x^\star\|^2+\frac{512}{3}HD_0^3.
\end{equation}
This quantity is used only in the analysis; it is not an input to the
algorithm.  We use the conventions $\omega_{-1}=0$ and $z^{-1}=0$ below.
They are consistent with the initialization in
Algorithm~\ref{alg:cubic}.

The first lemma relates the decrease over an interval to the change
in the gradient aggregate and to the projection errors. Keeping
part of the decrease on the left will let us control these error
terms in the next two lemmas.

\begin{lemma}[Weighted interval estimate]\label{lem:reserve}
For all integers $0\le \ell\le k$,
\begin{equation}\label{eq:window-reserve}
\begin{aligned}
 \omega_k^2\Delta_{k+1}
 +\frac34\sum_{i=\ell}^k\omega_i^2d_i
 &\le \omega_{\ell-1}^2\Delta_\ell
 +(x^0-x^\star)^T(z^k-z^{\ell-1})\\
 &\quad
 -\frac{\|z^k\|^2-\|z^{\ell-1}\|^2}{12\bigl(L_0+\sqrt{2HL_0D_0}\bigr)}
 +D_0\sum_{i=\ell}^k\omega_i\|e^i\|.
\end{aligned}
\end{equation}
\end{lemma}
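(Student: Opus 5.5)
The proof is the standard SESOP telescoping argument. The projection defects $e^i$ enter only through a level-set radius bound. I would carry it out in three steps.

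\textbf{Step 1: one-step convexity inequality.} For each $i$ with $\ell\le i\le k$, convexity \eqref{eq:convexity} at $x^i$ with $y=x^\star$ gives $\Delta_i\le\nabla f(x^i)^T(x^i-x^\star)$. Split the gradient as $\nabla f(x^i)=h^i+e^i$. By \eqref{eq:orthogonalities}, $h^i\perp(x^i-x^0)$, so $h^{i\,T}(x^i-x^\star)=h^{i\,T}(x^0-x^\star)$. For the removed component, Cauchy--Schwarz and $x^i\in\mathcal C_0$ (Proposition~\ref{prop:iteration}) give $e^{i\,T}(x^i-x^\star)\le D_0\|e^i\|$. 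Hence
\[
 \Delta_i\le h^{i\,T}(x^0-x^\star)+D_0\|e^i\|.
\]
This also covers $i=0$, where $h^0=\nabla f(x^0)$ and $e^0=0$, and the constant extension after a stop, where all terms vanish. Multiplying by $\omega_i$ and summing over $\ell\le i\le k$, the identity $z^k-z^{\ell-1}=\sum_{i=\ell}^k\omega_ih^i$ turns the right-hand side into $(x^0-x^\star)^T(z^k-z^{\ell-1})+D_0\sum_{i=\ell}^k\omega_i\|e^i\|$.

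\textbf{Step 2: Abel summation of the left-hand side.} I use $\omega_i=\omega_i^2-\omega_{i-1}^2$ from Lemma~\ref{lem:weights}, with $\omega_{-1}=0$, together with $\Delta_i=\Delta_{i+1}+d_i$. Shifting the index in the second sum gives
\[
 \sum_{i=\ell}^k(\omega_i^2-\omega_{i-1}^2)\Delta_i
 =\omega_k^2\Delta_{k+1}-\omega_{\ell-1}^2\Delta_\ell+\sum_{i=\ell}^k\omega_i^2d_i .
\]
Combined with Step~1, this yields the estimate with the full $\sum_{i=\ell}^k\omega_i^2d_i$ on the left and without the $\|z\|^2$ term on the right.

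\textbf{Step 3: trade a quarter of the decrease for the aggregate norm.} Write $\omega_i^2d_i=\tfrac34\omega_i^2d_i+\tfrac14\omega_i^2d_i$. By \eqref{eq:global-two-budgets} and $\|h^i\|\le\|\nabla f(x^i)\|$,
\[
 \tfrac14\omega_i^2d_i\ge\frac{\omega_i^2\|h^i\|^2}{12\mathcal L}.
\]
Since $h^i\perp z^{i-1}$, the recursion $z^i=z^{i-1}+\omega_ih^i$ gives the Pythagorean identity $\|z^i\|^2=\|z^{i-1}\|^2+\omega_i^2\|h^i\|^2$. Therefore $\sum_{i=\ell}^k\omega_i^2\|h^i\|^2=\|z^k\|^2-\|z^{\ell-1}\|^2$. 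Moving this nonnegative quantity, divided by $12\mathcal L$, to the right-hand side gives exactly \eqref{eq:window-reserve}.

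No step is deep. The main point needing care is bookkeeping at the boundary: checking that the conventions $\omega_{-1}=0$, $z^{-1}=0$, $h^0=\nabla f(x^0)$, $e^0=0$ make the case $\ell=0$ and the post-termination extension consistent. One must also confirm that the gradient budget $\|\nabla f(x^i)\|^2\le3\mathcal L d_i$ is indexed by the same $i$ as $h^i$, which it is by \eqref{eq:algorithm-gradient}.
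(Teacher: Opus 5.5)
Your proposal is correct and follows essentially the paper's proof. Both use the summation-by-parts identity from $\omega_i=\omega_i^2-\omega_{i-1}^2$, the convexity bound $\Delta_i\le (h^i)^T(x^0-x^\star)+D_0\|e^i\|$ summed against $z^i-z^{i-1}=\omega_ih^i$, and the same three-quarters/one-quarter split of the weighted decrease, paid for via the Pythagorean identity and $\|h^i\|^2\le\|\nabla f(x^i)\|^2\le 3\mathcal L d_i$ (in the nonstationary case $\mathcal L>0$); presenting the convexity step before the Abel summation is immaterial.
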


\begin{proof}
The weight identities in Lemma~\ref{lem:weights} give
$\omega_i=\omega_i^2-\omega_{i-1}^2$.  Summation by parts therefore yields
the exact identity
\begin{equation}\label{eq:weighted-gap-identity}
 \omega_k^2\Delta_{k+1}
 +\sum_{i=\ell}^k\omega_i^2d_i
 =\omega_{\ell-1}^2\Delta_\ell+\sum_{i=\ell}^k\omega_i\Delta_i.
\end{equation}
Indeed,
\[
 \sum_{i=\ell}^k\omega_i\Delta_i
 =\omega_k^2\Delta_k-\omega_{\ell-1}^2\Delta_\ell
   +\sum_{i=\ell}^{k-1}\omega_i^2(\Delta_i-\Delta_{i+1}),
\]
and substituting $\Delta_k=\Delta_{k+1}+d_k$ gives
\eqref{eq:weighted-gap-identity}.

Convexity at $x^i$ and the decomposition
$\nabla f(x^i)=h^i+e^i$ imply
\[
\begin{aligned}
 \Delta_i
 &\le \nabla f(x^i)^T(x^i-x^\star)\\
 &=(h^i)^T(x^i-x^0)+(h^i)^T(x^0-x^\star)
   +(e^i)^T(x^i-x^\star)\\
 &\le (h^i)^T(x^0-x^\star)+D_0\|e^i\|.
\end{aligned}
\]
Here $(h^i)^T(x^i-x^0)=0$ by \eqref{eq:orthogonalities}, and
$\|x^i-x^\star\|\le D_0$ because $x^i\in\mathcal C_0$.  Since
$z^i-z^{i-1}=\omega_i h^i$, summing the last inequality after multiplication
by $\omega_i$ gives
\[
 \sum_{i=\ell}^k\omega_i\Delta_i
 \le (x^0-x^\star)^T(z^k-z^{\ell-1})
     +D_0\sum_{i=\ell}^k\omega_i\|e^i\|.
\]

The second orthogonality in \eqref{eq:orthogonalities} gives a Pythagorean
identity at every update:
\[
 \|z^i\|^2
 =\|z^{i-1}+\omega_i h^i\|^2
 =\|z^{i-1}\|^2+\omega_i^2\|h^i\|^2.
\]
Moreover, $h^i$ is the orthogonal-projection residual of
$\nabla f(x^i)$, so $\|h^i\|\le\|\nabla f(x^i)\|$.  Telescoping and using
\eqref{eq:algorithm-gradient} now give
\begin{equation}\label{eq:aggregate-interval}
 \|z^k\|^2-\|z^{\ell-1}\|^2
 =\sum_{i=\ell}^k\omega_i^2\|h^i\|^2
 \le3\mathcal L\sum_{i=\ell}^k\omega_i^2d_i.
\end{equation}
Split the decrease sum on the left of
\eqref{eq:weighted-gap-identity} into three quarters and one quarter.
By \eqref{eq:aggregate-interval}, the latter quarter is at least
$(\|z^k\|^2-\|z^{\ell-1}\|^2)/(12\mathcal L)$.  Moving this lower bound to
the right proves \eqref{eq:window-reserve}.
\end{proof}

We next apply the interval inequality starting at iteration zero.
This bounds both the objective gap and the norm of the aggregate.
These bounds will control what has already accumulated when a
later interval begins.

\begin{lemma}[Prefix estimates]\label{lem:prefix}
For every integer $k\ge0$,
\begin{equation}\label{eq:window-prefix}
\begin{aligned}
 \omega_k^2\Delta_{k+1}
 +\frac14\sum_{i=0}^k\omega_i^2d_i
 &\le3\bigl(L_0+\sqrt{2HL_0D_0}\bigr)\|x^0-x^\star\|^2
   +\frac{512}{3}HD_0^3\log(k+1).
\end{aligned}
\end{equation}
Consequently,
\begin{equation}\label{eq:window-aggregate-bound}
\begin{aligned}
 \|z^k\|^2
 &\le12\bigl(L_0+\sqrt{2HL_0D_0}\bigr)\bigl(1+\log(k+1)\bigr)\\
 &\quad\cdot\Bigl[3\bigl(L_0+\sqrt{2HL_0D_0}\bigr)
                 \|x^0-x^\star\|^2+\tfrac{512}{3}HD_0^3\Bigr].
\end{aligned}
\end{equation}
The initial gap and every shifted prefix gap satisfy
\begin{equation}\label{eq:prefix-gap}
\begin{aligned}
 \Delta_0&\le3\bigl(L_0+\sqrt{2HL_0D_0}\bigr)\|x^0-x^\star\|^2,\\
 \Delta_{\ell-1}&\le\frac{4(1+\log\ell)}{\ell^2}
 \Bigl[3\bigl(L_0+\sqrt{2HL_0D_0}\bigr)\|x^0-x^\star\|^2
                          +\tfrac{512}{3}HD_0^3\Bigr]
 \quad(\ell\ge1).
\end{aligned}
\end{equation}
\end{lemma}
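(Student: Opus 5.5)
Apply Lemma~\ref{lem:reserve} with $\ell=0$, so that $\omega_{-1}^2\Delta_0=0$ and $z^{-1}=0$. The right-hand side of \eqref{eq:window-reserve} then becomes $(x^0-x^\star)^Tz^k-\|z^k\|^2/(12\mathcal L)+D_0\sum_{i=0}^k\omega_i\|e^i\|$.

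\textbf{Linear term.} Young's inequality gives $(x^0-x^\star)^Tz^k\le 3\mathcal L\|x^0-x^\star\|^2+\|z^k\|^2/(12\mathcal L)$. This cancels the negative aggregate term exactly and produces the first constant in \eqref{eq:window-prefix}.

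\textbf{Defect sum.} This is the main obstacle. With $e^0=0$ and \eqref{eq:defect-budget}, each term satisfies $D_0\omega_i\|e^i\|\le D_0\omega_i(18H)^{1/3}d_{i-1}^{2/3}$. We must absorb part of it into the reserve $\tfrac12\sum\omega_{i-1}^2 d_{i-1}$, keeping $\tfrac14$ on the left. Apply Young with exponents $3/2$ and $3$ to $\omega_i d_{i-1}^{2/3}=(\omega_{i-1}^{4/3}d_{i-1}^{2/3})\cdot(\omega_i/\omega_{i-1}^{4/3})$. The first factor, raised to $3/2$, gives $\omega_{i-1}^2d_{i-1}$. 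The second, raised to $3$, gives $\omega_i^3/\omega_{i-1}^4\lesssim 1/\omega_{i-1}\lesssim 1/i$, using Lemma~\ref{lem:weights}: $\omega_i\le\omega_{i-1}+1$ and $\omega_{i-1}\ge (i+1)/2$. Summing $1/i$ yields the $\log(k+1)$ factor with a constant times $HD_0^3$. The shift by one index is harmless: the $d_k$ term is simply not needed, and $d_{i-1}$ with $i\le k$ lies within the reserve.

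I would fix the Young weights so that the coefficient of $\omega_{i-1}^2 d_{i-1}$ equals $\tfrac12$, i.e.\ $\tfrac34-\tfrac14$. Then I would check that the leftover constant is at most $512/3$, using $\omega_i/\omega_{i-1}\le 2$ and $\omega_{i-1}\ge i/2+\dots$. Since the paper fixes the constant, I would tune the split to match it rather than derive it sharply.

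\textbf{Aggregate bound \eqref{eq:window-aggregate-bound}.} Do not cancel the $\|z^k\|^2$ term fully. Instead use Young with half the weight, $(x^0-x^\star)^Tz^k\le 6\mathcal L\|x^0-x^\star\|^2+\|z^k\|^2/(24\mathcal L)$, leaving $\|z^k\|^2/(24\mathcal L)$ on the left. Bounding the other terms as above gives $\|z^k\|^2\le 24\mathcal L[\,6\mathcal L\|x^0-x^\star\|^2+\tfrac{512}{3}HD_0^3\log(k+1)]$. This is at most $12\mathcal L(1+\log(k+1))\,C_{\mathrm{global}}$ up to constants. Alternatively, use \eqref{eq:aggregate-interval}, $\|z^k\|^2\le 3\mathcal L\sum\omega_i^2d_i$, and then $\sum\omega_i^2d_i\le 4[\dots]$ from \eqref{eq:window-prefix}. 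This gives $12\mathcal L(C_1+C_2\log(k+1))\le 12\mathcal L(1+\log(k+1))C_{\mathrm{global}}$ directly, and it is the cleaner route, so I would use it.

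\textbf{Gap bounds \eqref{eq:prefix-gap}.} For $\Delta_0$, use convexity, $\Delta_0\le \nabla f(x^0)^T(x^0-x^\star)\le L_0\|x^0-x^\star\|^2\le 3\mathcal L\|x^0-x^\star\|^2$, via \eqref{eq:gradient-level-bound}. For $\ell\ge2$, take $k=\ell-2$ in \eqref{eq:window-prefix} and divide by $\omega_{\ell-2}^2\ge \ell^2/4$ from Lemma~\ref{lem:weights}. Then bound $\log(\ell-1)\le 1+\log\ell$ and each constant by $C_{\mathrm{global}}$. The case $\ell=1$ is the $\Delta_0$ bound, which is at most $4C_{\mathrm{global}}$.
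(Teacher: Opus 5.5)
Your proposal is correct and follows the paper's proof essentially step for step. You use the same $\ell=0$ instance of Lemma~\ref{lem:reserve}, the same Young (completing-the-square) cancellation of the aggregate term, and the same $\tfrac12$-absorption giving $\tfrac{32}{3}HD_0^3\,\omega_i^3/\omega_{i-1}^4\le\tfrac{512}{3}HD_0^3/(i+1)$; it is $1/(i+1)$, not $1/i$, that sums to $\log(k+1)$. Your aggregate and shifted-gap arguments are the paper's, and your only deviation, bounding $\Delta_0$ via $\|\nabla f(x^0)\|\le L_0\|x^0-x^\star\|$ instead of $\|\nabla f(x^0)\|^2\le3\mathcal L d_0\le3\mathcal L\Delta_0$, is equally valid and slightly sharper.
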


\begin{proof}
Apply Lemma~\ref{lem:reserve} with $\ell=0$.  The conventions
$\omega_{-1}=0$ and $z^{-1}=0$, together with $e^0=0$, reduce its right-hand
side to
\[
 (x^0-x^\star)^Tz^k-\frac{\|z^k\|^2}{12\mathcal L}
 +D_0\sum_{i=1}^k\omega_i\|e^i\|.
\]
Completing the square gives
\begin{equation}\label{eq:prefix-complete-square}
 (x^0-x^\star)^Tz^k-\frac{\|z^k\|^2}{12\mathcal L}
 =3\mathcal L\|x^0-x^\star\|^2
  -\frac{\|z^k-6\mathcal L(x^0-x^\star)\|^2}{12\mathcal L}
 \le3\mathcal L\|x^0-x^\star\|^2.
\end{equation}

We use the following elementary scalar inequality.  For
$\alpha\ge0$, $\beta>0$, and $t\ge0$,
\begin{equation}\label{eq:weighted-young}
 \alpha t^{2/3}-\beta t\le\frac{4\alpha^3}{27\beta^2}.
\end{equation}
To verify it, write $t=y^3$ with $y\ge0$.  The derivative of
$\alpha y^2-\beta y^3$ is $y(2\alpha-3\beta y)$, so the maximum is attained
at $y=2\alpha/(3\beta)$ (or at $y=0$ if $\alpha=0$), and its value is the
right-hand side of \eqref{eq:weighted-young}.

For $i\ge1$, the defect estimate \eqref{eq:defect-budget} and
\eqref{eq:weighted-young}, with
\[
 \alpha=D_0(18H)^{1/3}\omega_i,\qquad
 \beta=\frac{\omega_{i-1}^2}{2},\qquad
 t=d_{i-1},
\]
give
\begin{equation}\label{eq:prefix-defect-young}
 D_0\omega_i\|e^i\|-\frac12\omega_{i-1}^2d_{i-1}
 \le\frac{32}{3}HD_0^3
       \frac{\omega_i^3}{\omega_{i-1}^4}.
\end{equation}
After summing \eqref{eq:prefix-defect-young} from $i=1$ to $k$, the
decrease left after this absorption is at least one quarter of the whole
prefix, because
\[
 \frac34\sum_{i=0}^k\omega_i^2d_i
 -\frac12\sum_{i=1}^k\omega_{i-1}^2d_{i-1}
 =\frac14\sum_{i=0}^{k-1}\omega_i^2d_i
  +\frac34\omega_k^2d_k
 \ge\frac14\sum_{i=0}^k\omega_i^2d_i.
\]

The bounds in \eqref{eq:weights} imply, for $i\ge1$,
\[
 \frac{\omega_i^3}{\omega_{i-1}^4}
 \le\frac{(i+1)^3}{((i+1)/2)^4}
 =\frac{16}{i+1}.
\]
For $k\ge1$, monotonicity of $u\mapsto1/u$ gives
\[
 \sum_{i=1}^k\frac1{i+1}
 \le\int_1^{k+1}\frac{du}{u}=\log(k+1);
\]
for $k=0$ both sides of the corresponding empty-sum estimate are zero.
Combining this bound with \eqref{eq:window-reserve},
\eqref{eq:prefix-complete-square}, and
\eqref{eq:prefix-defect-young} proves the first inequality in
\eqref{eq:window-prefix}.  The second follows from the definition
\eqref{eq:proof-scale}.

Equation \eqref{eq:aggregate-interval} with $\ell=0$, followed by
\eqref{eq:window-prefix}, gives
\[
 \|z^k\|^2
 \le3\mathcal L\sum_{i=0}^k\omega_i^2d_i
 \le12\mathcal L C_{\mathrm{global}}(1+\log(k+1)),
\]
which is \eqref{eq:window-aggregate-bound}.

It remains to prove \eqref{eq:prefix-gap}.  Convexity and
\eqref{eq:algorithm-gradient} give
\[
 \Delta_0
 \le\|x^0-x^\star\|\,\|\nabla f(x^0)\|,
 \qquad
 \|\nabla f(x^0)\|^2\le3\mathcal L d_0
 \le3\mathcal L\Delta_0.
\]
If $\Delta_0=0$, the first assertion is immediate.  Otherwise, divide the
combined inequality by $\sqrt{\Delta_0}$ and square to obtain
$\Delta_0\le3\mathcal L\|x^0-x^\star\|^2$.

For $\ell=1$, this initial estimate and \eqref{eq:proof-scale} imply
$\Delta_0\le C_{\mathrm{global}}\le4C_{\mathrm{global}}(1+\log1)$.  If $\ell\ge2$, use
\eqref{eq:window-prefix} with $k=\ell-2$ and the weight lower bound
$\omega_{\ell-2}\ge \ell/2$:
\[
 \Delta_{\ell-1}
 \le\frac{C_{\mathrm{global}}(1+\log(\ell-1))}{\omega_{\ell-2}^2}
 \le\frac{4C_{\mathrm{global}}(1+\log \ell)}{\ell^2}.
\]
This completes the proof.
\end{proof}

We now apply the interval inequality where the weights approximately
double. The next lemma bounds the correction errors on such an
interval and gives the recurrence used in the main theorem.
These intervals only organize the proof; the algorithm continues
without restarting.

\begin{lemma}[Comparable-weight window]\label{lem:window}
Fix an integer $\ell\ge1$, and let $k\ge \ell$ be the first index such that
\[
 \omega_k\ge2\omega_{\ell-1}.
\]
Then
\begin{equation}\label{eq:window-gap-recurrence}
\begin{aligned}
 \omega_k^2\Delta_{k+1}
 +\frac14\sum_{i=\ell}^k\omega_i^2d_i
 &\le\omega_{\ell-1}^2\Delta_\ell\\
 &\quad+16\sqrt{1+\log\ell}
 \Bigl[3\bigl(L_0+\sqrt{2HL_0D_0}\bigr)\|x^0-x^\star\|^2
                       +\tfrac{512}{3}HD_0^3\Bigr].
\end{aligned}
\end{equation}
\end{lemma}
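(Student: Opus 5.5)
The plan is to apply Lemma~\ref{lem:reserve} on the window $[\ell,k]$ and bound each term on its right-hand side by a multiple of $C_{\mathrm{global}}\sqrt{1+\log\ell}$ from \eqref{eq:proof-scale}, keeping $\omega_{\ell-1}^2\Delta_\ell$ untouched. There are three groups of terms: the linear aggregate term $(x^0-x^\star)^T(z^k-z^{\ell-1})$, the negative quadratic term, and the defect sum $D_0\sum_{i=\ell}^k\omega_i\|e^i\|$.

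\textbf{Window length.} First I would record how long the window is. Since $k$ is the first index with $\omega_k\ge2\omega_{\ell-1}$, we have $\omega_{k-1}<2\omega_{\ell-1}\le2\ell$ by Lemma~\ref{lem:weights}. Combined with $\omega_{k-1}\ge(k+1)/2$, this gives $k+1<4\ell$. Hence $1+\log(k+1)\le 1+\log4+\log\ell\le(1+\log4)(1+\log\ell)$, and the window contains at most $3\ell$ indices.

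\textbf{Aggregate terms.} By the Pythagorean identity in the proof of Lemma~\ref{lem:reserve}, $\|z^i\|$ is nondecreasing. So the negative term $-(\|z^k\|^2-\|z^{\ell-1}\|^2)/(12\mathcal L)$ is nonpositive and I simply drop it. For the linear term I use Cauchy--Schwarz together with $\|z^k-z^{\ell-1}\|\le\|z^k\|+\|z^{\ell-1}\|\le2\|z^k\|$. Then \eqref{eq:window-aggregate-bound} gives
\[
\|x^0-x^\star\|\,\|z^k\|
\le\sqrt{12\mathcal L\|x^0-x^\star\|^2\,C_{\mathrm{global}}(1+\log(k+1))}
\le2C_{\mathrm{global}}\sqrt{1+\log(k+1)},
\]
where the last step uses $3\mathcal L\|x^0-x^\star\|^2\le C_{\mathrm{global}}$. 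With the window-length bound, this term is at most $4\sqrt{1+\log4}\,C_{\mathrm{global}}\sqrt{1+\log\ell}$.

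\textbf{Defect sum.} I split it into the first index $i=\ell$ and the rest. For $\ell<i\le k$, I repeat the Young absorption \eqref{eq:prefix-defect-young} against $\tfrac12\omega_{i-1}^2d_{i-1}$, whose index $i-1$ lies in $[\ell,k-1]$. The same bookkeeping as in Lemma~\ref{lem:prefix} then leaves $\tfrac14\sum_{i=\ell}^k\omega_i^2d_i$ on the left. The cost is $\tfrac{32}{3}HD_0^3\sum_{i=\ell+1}^k16/(i+1)$. At most $3\ell$ terms each at most $16/(\ell+2)$ make this $O(HD_0^3)$, which is $O(C_{\mathrm{global}})$ because $\tfrac{512}{3}HD_0^3\le C_{\mathrm{global}}$.

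The index $i=\ell$ cannot be absorbed, because $d_{\ell-1}$ is outside the window. Instead I bound it directly. Using \eqref{eq:defect-budget}, $d_{\ell-1}\le\Delta_{\ell-1}$, the prefix gap \eqref{eq:prefix-gap}, and $\omega_\ell\le\ell+1$, I get
\[
D_0\omega_\ell\|e^\ell\|\lesssim D_0H^{1/3}C_{\mathrm{global}}^{2/3}(1+\log\ell)^{2/3}\ell^{-1/3}.
\]
Then $D_0H^{1/3}\le(3/512)^{1/3}C_{\mathrm{global}}^{1/3}$, and $(1+\log\ell)^{2/3}\ell^{-1/3}$ is bounded uniformly in $\ell$, so this term is also $O(C_{\mathrm{global}})$.

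\textbf{Main obstacle.} The structure is routine once these pieces are in place. The hard part is the constant bookkeeping: I need to verify that the three contributions together do not exceed $16\sqrt{1+\log\ell}\,C_{\mathrm{global}}$. These are the aggregate term (about $4\cdot1.54$), the absorbed defects (a harmonic-type sum over a window of length at most $3\ell$), and the uncontrolled first defect. If the crude bounds overshoot $16$, I would first tighten the aggregate step by using $\|z^{\ell-1}\|$ with its own smaller logarithm. I would also bound the harmonic sum by $\log\!\big((k+1)/(\ell+1)\big)\le\log4$ instead of counting terms.
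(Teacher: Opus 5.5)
Your proposal is correct and follows essentially the paper's argument. You apply Lemma~\ref{lem:reserve} on the window, bound the unabsorbable first defect $D_0\omega_\ell\|e^\ell\|$ through the prefix gap \eqref{eq:prefix-gap} by $(54/e)^{1/3}C_{\mathrm{global}}$, and absorb the internal defects with the weighted Young inequality at cost at most $512HD_0^3\le 3C_{\mathrm{global}}$.

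The only real deviation is the aggregate term. The paper keeps the negative quadratic term and completes the square around $\|z^{\ell-1}\|$, which gives $4C_{\mathrm{global}}\sqrt{1+\log\ell}+C_{\mathrm{global}}$. You instead drop that term and use $\|z^k\|$ together with $k+1<4\ell$, which costs a factor $\sqrt{1+\log 4}$. Even your crude bounds total about $(4\sqrt{1+\log4}+2.71+3)\,C_{\mathrm{global}}\sqrt{1+\log\ell}\approx 11.9\,C_{\mathrm{global}}\sqrt{1+\log\ell}<16\,C_{\mathrm{global}}\sqrt{1+\log\ell}$, so the fallback tightening you mention is not needed.
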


\begin{proof}
The lower weight bound in \eqref{eq:weights} shows that the defining index
$k$ exists.  Since $\omega_{\ell-1}\ge1$ and the weight recurrence gives the
strict inequality $\omega_\ell<\omega_{\ell-1}+1\le2\omega_{\ell-1}$, we have
$k\ge \ell+1$.  By minimality, $\omega_{k-1}<2\omega_{\ell-1}$.  The increment
bounds in Lemma~\ref{lem:weights} therefore give
\[
 \omega_{\ell-1}+\frac{k-\ell}{2}
 \le\omega_{k-1}<2\omega_{\ell-1},
 \qquad\text{hence}\qquad k-\ell<2\omega_{\ell-1}.
\]
For $\ell<i\le k$, we also have
$\omega_{i-1}\ge\omega_{\ell-1}$ and
$\omega_i\le\omega_{i-1}+1\le2\omega_{i-1}$.  Thus
\begin{equation}\label{eq:window-ratio-sum}
 \sum_{i=\ell+1}^k\frac{\omega_i^3}{\omega_{i-1}^4}
 \le\sum_{i=\ell+1}^k\frac{8}{\omega_{i-1}}
 \le\frac{8(k-\ell)}{\omega_{\ell-1}}
 \le16.
\end{equation}

We first bound the aggregate boundary term in Lemma~\ref{lem:reserve}.
The Pythagorean identity in \eqref{eq:aggregate-interval} shows that
$\|z^k\|\ge\|z^{\ell-1}\|$.  Put
$t=\|z^k\|-\|z^{\ell-1}\|\ge0$.  The triangle inequality gives
$\|z^k-z^{\ell-1}\|\le2\|z^{\ell-1}\|+t$, while
\[
 \|z^k\|^2-\|z^{\ell-1}\|^2
 =2\|z^{\ell-1}\|t+t^2.
\]
Consequently,
\begin{align*}
 &(x^0-x^\star)^T(z^k-z^{\ell-1})
 -\frac{\|z^k\|^2-\|z^{\ell-1}\|^2}{12\mathcal L}\\
 &\quad\le
 2\|x^0-x^\star\|\,\|z^{\ell-1}\|
 +\left(\|x^0-x^\star\|-\frac{\|z^{\ell-1}\|}{6\mathcal L}\right)t
 -\frac{t^2}{12\mathcal L}\\
 &\quad\le
 2\|x^0-x^\star\|\,\|z^{\ell-1}\|
 +3\mathcal L\|x^0-x^\star\|^2.
\end{align*}
For the last line, the maximum over $t\ge0$ of the last two terms is
\[
 3\mathcal L
 \max\!\left\{\|x^0-x^\star\|-
       \frac{\|z^{\ell-1}\|}{6\mathcal L},0\right\}^2
 \le3\mathcal L\|x^0-x^\star\|^2.
\]
Using \eqref{eq:window-aggregate-bound} with $k=\ell-1$ and
$3\mathcal L\|x^0-x^\star\|^2\le C_{\mathrm{global}}$, we obtain
\begin{equation}\label{eq:window-boundary}
\begin{aligned}
 &(x^0-x^\star)^T(z^k-z^{\ell-1})
 -\frac{\|z^k\|^2-\|z^{\ell-1}\|^2}{12\mathcal L}\\
 &\qquad\le
 4C_{\mathrm{global}}\sqrt{1+\log \ell}+C_{\mathrm{global}}
 \le5C_{\mathrm{global}}\sqrt{1+\log \ell}.
\end{aligned}
\end{equation}

The bound for $e^\ell$ uses $d_{\ell-1}$, the decrease immediately
before the current interval.  Since
$d_{\ell-1}\le\Delta_{\ell-1}$, Lemma~\ref{lem:prefix},
$\omega_\ell\le \ell+1\le2\ell$, and \eqref{eq:defect-budget} give
\begin{align*}
 D_0\omega_\ell\|e^\ell\|
 &\le2D_0(18H)^{1/3}\ell
       \left(\frac{4C_{\mathrm{global}}(1+\log \ell)}{\ell^2}\right)^{2/3}\\
 &=2\,4^{2/3}D_0(18H)^{1/3}C_{\mathrm{global}}^{2/3}
       \left(\frac{(1+\log \ell)^2}{\ell}\right)^{1/3}.
\end{align*}
The definition of $C_{\mathrm{global}}$ implies
\[
 \bigl[D_0(18H)^{1/3}\bigr]^3
 =18HD_0^3\le\frac{27}{256}C_{\mathrm{global}}.
\]
The function $v\mapsto(1+\log v)^2/v$ on $[1,\infty)$ has derivative
$(1+\log v)(1-\log v)/v^2$, and hence its maximum is $4/e$, attained at
$v=e$.  Substitution gives the explicit bound
\begin{equation}\label{eq:first-window-defect}
 D_0\omega_\ell\|e^\ell\|
 \le(54/e)^{1/3}C_{\mathrm{global}}<4C_{\mathrm{global}}.
\end{equation}

Every remaining correction is paid for inside the current window.  For
$\ell<i\le k$, the same scalar inequality \eqref{eq:weighted-young} used in
the prefix proof gives
\[
 D_0\omega_i\|e^i\|-\frac12\omega_{i-1}^2d_{i-1}
 \le\frac{32}{3}HD_0^3
       \frac{\omega_i^3}{\omega_{i-1}^4}.
\]
Summing and using \eqref{eq:window-ratio-sum} bounds the unabsorbed part by
\begin{equation}\label{eq:internal-window-defects}
 \frac{32}{3}HD_0^3
 \sum_{i=\ell+1}^k\frac{\omega_i^3}{\omega_{i-1}^4}
 \le\frac{512}{3}HD_0^3\le C_{\mathrm{global}}.
\end{equation}
The absorbed decreases are precisely those indexed from $\ell$ through
$k-1$.  Hence the decrease remaining on the left of
\eqref{eq:window-reserve} satisfies
\[
 \frac34\sum_{i=\ell}^k\omega_i^2d_i
 -\frac12\sum_{i=\ell+1}^k\omega_{i-1}^2d_{i-1}
 =\frac14\sum_{i=\ell}^{k-1}\omega_i^2d_i
  +\frac34\omega_k^2d_k
 \ge\frac14\sum_{i=\ell}^k\omega_i^2d_i.
\]
Finally, combine \eqref{eq:window-boundary},
\eqref{eq:first-window-defect}, and
\eqref{eq:internal-window-defects}.  Since
$\sqrt{1+\log \ell}\ge1$, their sum is at most
\[
 5C_{\mathrm{global}}\sqrt{1+\log \ell}+4C_{\mathrm{global}}+C_{\mathrm{global}}
 \le10C_{\mathrm{global}}\sqrt{1+\log \ell}
 \le16C_{\mathrm{global}}\sqrt{1+\log \ell}.
\]
Substitution in \eqref{eq:window-reserve} proves
\eqref{eq:window-gap-recurrence}.
\end{proof}

\subsubsection{Main result: cubic descent with SESOP aggregation}

The preceding estimates now give the global rate. The proof uses
intervals on which the weights approximately double. Their role is
only to organize the analysis; Algorithm~\ref{alg:cubic} carries
the same starting point, weights, and aggregate throughout its run.

After initialization, each iteration uses one new gradient and
at most three HVPs, with no function evaluations.

\begin{theorem}[Global convergence with three HVPs]\label{thm:main}
Under Assumptions~\ref{ass:convex}--\ref{ass:hessian},
Algorithm~\ref{alg:cubic}, with exact model solves and projections,
is well defined and generates a monotone sequence in $\mathcal C_0$.
If $\nabla f(x^0)\ne0$, then for every $K\ge0$,
\begin{equation}\label{eq:explicit-rate}
 f(x^K)-f_\star\le
 \frac{2048\left[3\bigl(L_0+\sqrt{2HL_0D_0}\bigr)\|x^0-x^\star\|^2
                    +\tfrac{512}{3}HD_0^3\right]
                  \sqrt{1+\log(K+1)}}{(K+1)^2},
\end{equation}

Consequently, for a universal constant $C$,
\begin{equation}\label{eq:main-rate}
 f(x^K)-f_\star\le C(L_0D_0^2+HD_0^3)
             \frac{\sqrt{1+\log(K+1)}}{(K+1)^2}.
\end{equation}
The same claims hold trivially for a stationary initial point and
after constant extension following finite termination.
\end{theorem}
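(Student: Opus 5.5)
The argument is an iteration of the comparable-weight recurrence of Lemma~\ref{lem:window} along a doubling chain of indices. Well-definedness, monotonicity and invariance of $\mathcal C_0$ come from Proposition~\ref{prop:iteration}. The stationary and terminated cases are trivial, since then $f(x^K)=f_\star$ for all later $K$. So assume $\nabla f(x^0)\ne0$ and write $C_{\mathrm{global}}$ as in \eqref{eq:proof-scale}.

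\textbf{Chain and recurrence.} Set $\ell_0=1$. Given $\ell_j$, let $k_j$ be the first index with $\omega_{k_j}\ge2\omega_{\ell_j-1}$, and put $\ell_{j+1}=k_j+1$. Dropping the nonnegative decrease sum in \eqref{eq:window-gap-recurrence} and using $\Delta_{\ell_{j+1}}=\Delta_{k_j+1}$, the quantity $A_j:=\omega_{\ell_j-1}^2\Delta_{\ell_j}$ satisfies
\[
 A_{j+1}\le A_j+16C_{\mathrm{global}}\sqrt{1+\log\ell_j}.
\]
Summing gives $A_J\le A_0+16C_{\mathrm{global}}\sum_{j<J}\sqrt{1+\log\ell_j}$. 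Here $A_0=\omega_0^2\Delta_1\le\Delta_0\le C_{\mathrm{global}}$ by \eqref{eq:prefix-gap}.

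\textbf{Main obstacle: the sum of logarithms.} Summed naively over $J\approx\log_2K$ windows, the increments would give $\log K\sqrt{\log K}$. To avoid this I would use the weighted form of the recurrence. Divide by $\omega_{\ell_{J}-1}^2$. Since the weights at least double along the chain, $\omega_{\ell_{j}-1}^2\le 4^{-(J-j)}\omega_{\ell_J-1}^2$, but only if I keep the increments multiplied by their own scale rather than summing $A_j$ additively. Concretely, unroll $\Delta_{\ell_{J}}\le \omega_{\ell_J-1}^{-2}\bigl(A_0+16C_{\mathrm{global}}\sum_j\sqrt{1+\log\ell_j}\bigr)$. If this loses a $\log$ factor, I would instead estimate each window's starting gap fresh from Lemma~\ref{lem:prefix} and apply only the last window. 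That is, take $\ell=\ell_{J-1}$ and bound $\omega_{\ell-1}^2\Delta_\ell$ by \eqref{eq:window-prefix}, which gives $C_{\mathrm{global}}(1+\log\ell)$. That is still too big, because \eqref{eq:window-prefix} carries $\log$ rather than $\sqrt{\log}$. So the real route must be this: the window increment $16C\sqrt{1+\log\ell}$ replaces the prefix bound at each scale, and geometric growth means the final gap is dominated by the last one or two windows once we use $\Delta$ monotone. Specifically, I will prove by induction on $j$ that $A_j\le c\,C_{\mathrm{global}}\sqrt{1+\log\ell_j}$. This requires comparing $A_j$ with $A_{j+1}$ after rescaling, and the doubling makes the previous contribution shrink by a factor $1/4$ relative to the new scale. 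Keeping the induction closed in that form, with constants like $16/(1-1/4)$, is where I expect the work.

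\textbf{Arbitrary $K$.} For general $K\ge1$, pick $j$ with $\ell_j\le K<\ell_{j+1}$. Monotonicity gives $\Delta_K\le\Delta_{\ell_j}\le A_j/\omega_{\ell_j-1}^2$. Since $\omega_{k_j}<2\omega_{\ell_j-1}+1$, $\omega_{k_j}\ge (K+1)/2$ and $\omega_{\ell_j-1}\ge1$, we get $\omega_{\ell_j-1}\gtrsim(K+1)/8$. This yields the $(K+1)^{-2}\sqrt{1+\log(K+1)}$ form with a constant of order $64\cdot 32$, consistent with $2048$. The case $K=0$ follows from \eqref{eq:prefix-gap}. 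Finally, \eqref{eq:main-rate} follows from $\|x^0-x^\star\|\le D_0$ and $\sqrt{2HL_0D_0}\,D_0^2\le L_0D_0^2+HD_0^3$ (AM--GM).
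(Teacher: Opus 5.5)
\textbf{There is a genuine gap in the central step.} Your window chain, the base case $A_0\le C_{\mathrm{global}}$, and the reduction of \eqref{eq:main-rate} to \eqref{eq:explicit-rate} are fine.

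When you drop $\tfrac14\sum_{i=\ell}^k\omega_i^2d_i$ from \eqref{eq:window-gap-recurrence}, you are left with $A_{j+1}\le A_j+16C_{\mathrm{global}}\sqrt{1+\log\ell_j}$. Its coefficient on $A_j$ is exactly one, and no rescaling recovers a contraction. Unrolling gives $\Delta_{\ell_J}\le\omega_{\ell_J-1}^{-2}\bigl(A_0+16C_{\mathrm{global}}\sum_{j<J}\sqrt{1+\log\ell_j}\bigr)$, and every window's increment carries the same final weight. The factor $\omega_{\ell_j-1}^2/\omega_{k_j}^2\le\tfrac14$ that you hope to gain in gap units is already spent by this normalization. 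Since $2\omega_{\ell_j-1}\le\omega_{k_j}<2\omega_{\ell_j-1}+1$, that ratio tends to $\tfrac14$ and no faster. With roughly $\log_2K$ windows, this yields only an order $(\log K)^{3/2}/K^2$ bound.

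For the same reason, the induction $A_j\le c\,C_{\mathrm{global}}\sqrt{1+\log\ell_j}$ cannot close. Because $\ell_{j+1}\approx2\ell_j$, the increment $\sqrt{1+\log\ell_{j+1}}-\sqrt{1+\log\ell_j}$ tends to zero. It therefore cannot absorb the additive term $16C_{\mathrm{global}}\sqrt{1+\log\ell_j}$. The ``$16/(1-1/4)$'' closure you anticipate has no factor $1/4$ to come from.

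\textbf{The missing idea is that the retained quarter of the decrease produces the contraction.} The weights increase and the decreases telescope, so $\sum_{i=\ell}^k\omega_i^2d_i\ge\omega_\ell^2(\Delta_\ell-\Delta_{k+1})$. Substituting this into \eqref{eq:window-gap-recurrence} gives
\[
 \bigl(\omega_k^2-\tfrac14\omega_\ell^2\bigr)\Delta_{k+1}\le\bigl(\omega_{\ell-1}^2-\tfrac14\omega_\ell^2\bigr)\Delta_\ell+16C_{\mathrm{global}}\sqrt{1+\log\ell}.
\]
Put $\alpha=\omega_\ell^2/\omega_{\ell-1}^2\in[1,4)$ and $\beta=\omega_k^2/\omega_{\ell-1}^2\ge4$. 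The inequality becomes
\[
 A_{j+1}\le\frac{1-\alpha/4}{1-\alpha/(4\beta)}A_j+\frac{16C_{\mathrm{global}}\sqrt{1+\log\ell_j}}{1-\alpha/(4\beta)}\le\frac45A_j+22C_{\mathrm{global}}\sqrt{1+\log\ell_j}.
\]
Since $\ell_j$ increases, induction then gives $A_j\le110\,C_{\mathrm{global}}\sqrt{1+\log\ell_j}$, because $\tfrac45\cdot110+22=110$.

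\textbf{The final step also needs a sharper comparison.} For $\ell_j\le K<\ell_{j+1}$, use $\omega_{K-1}<2\omega_{\ell_j-1}$, which holds by minimality or trivially if $K=\ell_j$. Combined with $\omega_{K-1}\ge(K+1)/2$, this gives $\Delta_K\le16A_j/(K+1)^2\le1760\,C_{\mathrm{global}}\sqrt{1+\log(K+1)}/(K+1)^2$. Your cruder bound $\omega_{\ell_j-1}\gtrsim(K+1)/8$ would cost a factor $64$. Together with $110$, that exceeds the stated constant $2048$.
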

\begin{proof}
Propositions~\ref{prop:iteration} and~\ref{prop:oracles} give
well-definedness, monotonicity, and the oracle count.
If $\nabla f(x^0)=0$, convexity makes $x^0$ a minimizer and there
is nothing to prove. Otherwise the preceding lemmas apply.
Recall that $\Delta_k=f(x^k)-f_\star$, $d_k=\Delta_k-\Delta_{k+1}$,
and that the bracket in \eqref{eq:explicit-rate} is the constant
$C_{\mathrm{global}}$ in \eqref{eq:proof-scale}.

Partition the positive indices into the intervals of
Lemma~\ref{lem:window}. Set $k_0=1$. For each interval number
$j\ge0$, let $k_{j+1}$ be one plus the first index at least $k_j$
for which $\omega_{k_{j+1}-1}\ge2\omega_{k_j-1}$.
Thus interval $j$ consists of the steps $k_j,\ldots,k_{j+1}-1$.
To track the weighted gap at its beginning, define
\[
 U_j=\omega_{k_j-1}^2\Delta_{k_j}.
\]
These intervals cover all positive indices because the weights
increase without bound.

For one such interval, write $\ell=k_j$ and $k=k_{j+1}-1$.
Monotonicity of the weights and telescoping of the decreases give
\[
 \sum_{i=\ell}^k\omega_i^2d_i
 \ge\omega_\ell^2\sum_{i=\ell}^k d_i
 =\omega_\ell^2(\Delta_\ell-\Delta_{k+1}).
\]
Substituting this into \eqref{eq:window-gap-recurrence} yields
\[
 \left(\omega_k^2-\tfrac14\omega_\ell^2\right)\Delta_{k+1}
 \le\left(\omega_{\ell-1}^2-\tfrac14\omega_\ell^2\right)\Delta_\ell
            +16C_{\mathrm{global}}\sqrt{1+\log \ell}.
\]
For the following algebra only, put
\[
 \alpha=\frac{\omega_\ell^2}{\omega_{\ell-1}^2},\qquad
 \beta=\frac{\omega_k^2}{\omega_{\ell-1}^2}.
\]
The weight increment bound and $\omega_{\ell-1}\ge1$ give
$1\le \alpha<4$, while the definition of $k$ gives $\beta\ge4$.
As $U_{j+1}=\omega_k^2\Delta_{k+1}$, the preceding inequality is
\[
 U_{j+1}\le
 \frac{1-\alpha/4}{1-\alpha/(4\beta)}U_j
 +\frac{16C_{\mathrm{global}}\sqrt{1+\log k_j}}{1-\alpha/(4\beta)}.
\]
Both coefficients can be bounded independently of the interval.
Indeed,
\[
 \frac{1-\alpha/4}{1-\alpha/(4\beta)}
 \le\frac{1-\alpha/4}{1-\alpha/16}\le\frac45,
 \qquad
 \frac1{1-\alpha/(4\beta)}\le\frac43.
\]
For the second inequality in the first chain, the rational function
decreases for $1\le \alpha<4$, so its maximum is attained at $\alpha=1$.
We have therefore obtained the contraction
\begin{equation}\label{eq:endpoint-contraction}
 U_{j+1}\le\frac45U_j+22C_{\mathrm{global}}\sqrt{1+\log k_j}.
\end{equation}

The initial value satisfies
$U_0=\Delta_1\le\Delta_0\le C_{\mathrm{global}}$ by Lemma~\ref{lem:prefix}.
Induction in \eqref{eq:endpoint-contraction}, using $k_{j+1}>k_j$,
gives
\[
 U_j\le110C_{\mathrm{global}}\sqrt{1+\log k_j}.
\]
The constant $110$ is chosen so that $(4/5)110+22=110$.
For an arbitrary $K\ge1$, choose the interval containing it,
$k_j\le K<k_{j+1}$. By monotonicity, $\Delta_K\le\Delta_{k_j}$.
The defining minimality of $k_{j+1}-1$ gives
$\omega_{K-1}<2\omega_{k_j-1}$, and
Lemma~\ref{lem:weights} gives $\omega_{K-1}\ge(K+1)/2$.
Combining these relations,
\begin{align*}
 \Delta_K
 &\le\frac{U_j}{\omega_{k_j-1}^2}
 \le\frac{16U_j}{(K+1)^2}\\
 &\le\frac{1760C_{\mathrm{global}}\sqrt{1+\log(K+1)}}{(K+1)^2}.
\end{align*}
For $K=0$, Lemma~\ref{lem:prefix} gives $\Delta_0\le C_{\mathrm{global}}$.
Thus the uniform factor $2048$ in \eqref{eq:explicit-rate} covers
all indices. At any terminating index the gap becomes zero, so
constant extension preserves the estimate.

Finally, $\|x^0-x^\star\|\le D_0$ and the elementary inequality
$\sqrt{ab}\le(a+b)/2$ give
\[
 \mathcal L D_0^2
 =L_0D_0^2+\sqrt{2(L_0D_0^2)(HD_0^3)}
 \le\tfrac32L_0D_0^2+HD_0^3.
\]
Absorbing the numerical constants proves \eqref{eq:main-rate}.
\end{proof}

The proof identifies where the logarithm enters and why its square
root is sufficient. Summing the correction costs from the initial
iteration produces a harmonic sum. That estimate bounds the norm
of $z^k$ by a square root of a logarithm. On an interval of comparable
weights, the internal correction costs have a uniformly bounded
sum, and only the aggregate at the interval boundary carries the
square-root logarithmic factor. The retained decrease then yields
the endpoint contraction \eqref{eq:endpoint-contraction}.

\subsection{Automatic local linear convergence}
\label{sec:local}

The linear rate in this section is local: it holds once the iterates
are sufficiently close to the minimizer. Under the additional
assumption below, the global theorem ensures that the iterates
reach this region from any starting point satisfying our assumptions.
The algorithm needs no test for entering the region and no
strong-convexity parameter.

We first prove entry into the local region. We then show that the
model step leaves only the error outside the search space, up to a
smaller remainder. Finally, the gradient direction gives a fixed
contraction of the error. We finish by replacing positive definiteness
with a local PL inequality, which also permits nonunique minimizers.

\begin{assumption}[Nondegenerate minimizer]
\label{ass:local}
The Hessian at the fixed minimizer $x^\star$ is positive definite:
\[
 v^T\nabla^2f(x^\star)v>0\qquad\text{for every }v\ne0.
\]
\end{assumption}

The local estimates use the smallest and largest eigenvalues of the
Hessian at the minimizer and their ratio:
\[
 \mu_\star=\lambda_{\min}(\nabla^2f(x^\star))>0,\qquad
 L_\star=\lambda_{\max}(\nabla^2f(x^\star)),\qquad
\kappa_\star=L_\star/\mu_\star.
\]
Thus $\kappa_\star$ is the local condition number. We measure vector
errors in the norm induced by the Hessian itself:
\begin{equation}\label{eq:local-error-norm}
 \|v\|_\star=\bigl(v^T\nabla^2f(x^\star)v\bigr)^{1/2}.
\end{equation}
It is equivalent to the Euclidean norm, since
\[
 \sqrt{\mu_\star}\|v\|\le\|v\|_\star\le\sqrt{L_\star}\|v\|.
\]
These quantities describe the local behavior; none is supplied to
Algorithm~\ref{alg:cubic}.

The global theorem gives convergence of function values. The next
lemma turns this into convergence of the iterates, so the local
estimates used below eventually apply.

\begin{lemma}[The iterates enter the local region]
\label{lem:local-entry}
Under Assumptions~\ref{ass:convex}, \ref{ass:level},
\ref{ass:hessian}, and~\ref{ass:local}, the minimizer $x^\star$
is unique and the iterates of Algorithm~\ref{alg:cubic} converge
to it. In particular, for all sufficiently large $k$,
\[
 \nabla^2f(x^k)\succeq\tfrac12\mu_\star I.
\]
\end{lemma}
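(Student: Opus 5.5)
The plan has three steps: uniqueness from local strict convexity, convergence of the iterates by compactness, and the Hessian bound by continuity.

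\textbf{Uniqueness.} Assumption~\ref{ass:local} and continuity of the Hessian (Assumption~\ref{ass:convex}) give a radius $\rho>0$ such that
\[
 \nabla^2f(x)\succeq\tfrac12\mu_\star I \quad\text{whenever } \|x-x^\star\|\le\rho.
\]
If $\nabla^2 f$ is Lipschitz near $x^\star$ one may take $\rho=\mu_\star/(4H)$, but plain continuity is enough. On this ball $f$ is strongly convex. Suppose $y$ is another minimizer. By convexity the whole segment $[x^\star,y]$ consists of minimizers, so $f$ is constant on it. Points of this segment near $x^\star$ lie in the ball, where $f$ is strictly convex along the segment, which is a contradiction. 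Hence $x^\star$ is the unique minimizer.

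\textbf{Convergence of the iterates.} Theorem~\ref{thm:main}, or the trivial stationary case, gives $f(x^k)\to f_\star$. Proposition~\ref{prop:iteration} keeps every iterate in the compact set $\mathcal C_0$. Take any subsequence of $(x^k)$. By compactness it has a further subsequence converging to some $\bar x\in\mathcal C_0$. Continuity of $f$ gives $f(\bar x)=f_\star$, so $\bar x$ is a minimizer, and uniqueness forces $\bar x=x^\star$. Since every subsequence has a further subsequence converging to $x^\star$, the whole sequence converges to $x^\star$.

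Under finite termination the sequence is extended constantly at a stationary point. That point is a minimizer, hence equals $x^\star$, so the conclusion still holds.

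\textbf{Hessian bound.} Once $\|x^k-x^\star\|\le\rho$, the choice of $\rho$ gives $\nabla^2f(x^k)\succeq\tfrac12\mu_\star I$. Concretely, Weyl's inequality and continuity of the Hessian give
\[
 \lambda_{\min}(\nabla^2f(x^k))\ge\mu_\star-\|\nabla^2f(x^k)-\nabla^2f(x^\star)\|\ge\tfrac12\mu_\star
\]
for all sufficiently large $k$.

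\textbf{Main difficulty.} The only step with real content is the uniqueness argument. It needs care because positive definiteness holds only at $x^\star$, so strong convexity is local. Combining that local strong convexity with the fact that the minimizer set of a convex function is convex rules out any other minimizer, and the remaining steps are routine compactness arguments.
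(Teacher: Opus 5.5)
Your proof is correct and follows the paper's argument. In both, continuity of the Hessian gives a ball of uniform positive definiteness. Uniqueness follows because the convex minimizer set would otherwise contain a segment through that ball. The whole sequence converges by $f(x^k)\to f_\star$ and compactness of $\mathcal C_0$, and the Hessian bound follows from eventual entry into the ball. The only cosmetic difference is that the paper phrases the uniqueness contradiction via the quadratic growth bound $f(x)\ge f_\star+\tfrac14\mu_\star\|x-x^\star\|^2$ on the ball, whereas you use strict convexity of $f$ along the segment, which is equivalent here.
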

\begin{proof}
Continuity of the Hessian gives a ball about $x^\star$ on which
$\nabla^2f(x)\succeq(\mu_\star/2)I$. Since
$\nabla f(x^\star)=0$, integrating along the segment from $x^\star$
to a point in that ball gives
\[
 f(x)\ge f(x^\star)+\tfrac14\mu_\star\|x-x^\star\|^2.
\]
Thus $x^\star$ is a strict local minimizer. If a second global
minimizer existed, convexity would make every point of the segment
between the two a minimizer. Points on that segment arbitrarily
close to $x^\star$ would contradict the displayed inequality.
Hence the global minimizer is unique.

Theorem~\ref{thm:main} gives $f(x^k)\to f(x^\star)$, and all
iterates lie in the compact initial sublevel $\mathcal C_0$.
By continuity, every cluster point must therefore be a global
minimizer and so must equal $x^\star$. This implies convergence
of the whole sequence: otherwise some subsequence would remain
a positive distance from $x^\star$, while compactness would give
it a cluster point with that same property. The Hessian bound now
follows from entry into the ball chosen above.
\end{proof}

The next lemma separates the error after a step into two parts:
the part of the current error outside the search space, and a
remainder quadratic in the current error. The final theorem will
bound the first part using the gradient direction. No independence
or angle condition on the directions defining the search space is needed.
Here $\dist_\star(v,\mathcal S)=\inf_{u\in\mathcal S}\|v-u\|_\star$
denotes distance in the norm~\eqref{eq:local-error-norm}.

\begin{lemma}[The error after one model step]
\label{lem:local-projection}
Under Assumptions~\ref{ass:convex}--\ref{ass:hessian} and~\ref{ass:local},
let $P^k$ be the orthogonal
projector onto $\mathcal S^k$ for the inner product $v^T\nabla^2f(x^\star)w$.
There exists a finite constant $C_1$, depending on the problem
and the fixed coefficient $H$ but independent of $k$ and the choice
of search space, such that, for all sufficiently large $k$,
\begin{equation}\label{eq:local-projection}
 \begin{aligned}
 x^{k+1}-x^\star&=(I-P^k)(x^k-x^\star)+w^k,\\
 w^k&\in\mathcal S^k,\qquad
 \|w^k\|_\star\le C_1\|x^k-x^\star\|_\star^2.
 \end{aligned}
\end{equation}
Consequently, using the distance defined just above the lemma,
\begin{equation}\label{eq:local-distance-bound}
 \|x^{k+1}-x^\star\|_\star\le\dist_\star(x^k-x^\star,\mathcal S^k)+C_1\|x^k-x^\star\|_\star^2.
\end{equation}
\end{lemma}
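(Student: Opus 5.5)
Write $\delta=x^k-x^\star$, $g=\nabla f(x^k)$, $A=\nabla^2f(x^\star)$, $B=\nabla^2f(x^k)$, and $s=s^k$. The plan is to compare the model step $s$ with the ideal step $-P^k\delta$. The key step is to write the first-order condition \eqref{eq:model-stationarity} in the $A$-inner product. For $u\in\mathcal S^k$ it reads
\[
u^T\bigl(g+Bs+H\|s\|s\bigr)=0 .
\]
Since $u^T g=u^TA\delta+u^Tr_1$ with $r_1=g-A\delta$, the condition becomes
\[
\langle u,\delta+s\rangle_\star=-u^T\bigl(r_1+(B-A)s+H\|s\|s\bigr).
\]
The Taylor/Lipschitz estimates give $\|r_1\|\le H\|\delta\|^2$ (from \eqref{eq:taylor-gradient} at $x^\star$ with step $\delta$) and $\|B-A\|\le 2H\|\delta\|$. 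Both need $[x^\star,x^k]\subseteq\mathcal C_0$, which holds by convexity of $\mathcal C_0$.

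Next, set $w^k:=P^k\delta+s\in\mathcal S^k$. Then
\[
x^{k+1}-x^\star=\delta+s=(I-P^k)\delta+w^k ,
\]
which is exactly the decomposition claimed in \eqref{eq:local-projection}. By the definition of $P^k$ we have $\langle u,\delta\rangle_\star=\langle u,P^k\delta\rangle_\star$ for $u\in\mathcal S^k$. So $w^k$ is the $A$-Riesz representer on $\mathcal S^k$ of the linear functional
\[
u\mapsto -u^T\bigl(r_1+(B-A)s+H\|s\|s\bigr).
\]
Taking $u=w^k$ gives
\[
\|w^k\|_\star^2\le\|w^k\|\,\bigl(H\|\delta\|^2+2H\|\delta\|\|s\|+H\|s\|^2\bigr).
\]
Using $\|w^k\|\le\mu_\star^{-1/2}\|w^k\|_\star$, this yields $\|w^k\|_\star\le \mu_\star^{-1/2}H(\|\delta\|+\|s\|)^2$. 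No basis, angle, or dimension information about $\mathcal S^k$ enters; only the variational characterization is used. This is why $C_1$ is independent of the search space.

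The remaining and main step is to show $\|s\|\le c\|\delta\|$ for large $k$. I would take the inner product of the stationarity condition with $s$, which gives
\[
s^TBs+H\|s\|^3=-g^Ts\le\|g\|\|s\| .
\]
By Lemma~\ref{lem:local-entry}, $B\succeq\tfrac12\mu_\star I$ for all sufficiently large $k$; this is the only point where "sufficiently large $k$" is used. Hence $\tfrac12\mu_\star\|s\|\le\|g\|\le L_0\|\delta\|$ by \eqref{eq:gradient-level-bound}, so $\|s\|\le 2L_0\|\delta\|/\mu_\star$.

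Substituting this bound and converting norms with $\|\delta\|\le\mu_\star^{-1/2}\|\delta\|_\star$ gives \eqref{eq:local-projection} with
\[
C_1=H\mu_\star^{-3/2}\bigl(1+2L_0/\mu_\star\bigr)^2 .
\]
Finally, \eqref{eq:local-distance-bound} follows from the triangle inequality. Indeed, $\|(I-P^k)\delta\|_\star=\dist_\star(\delta,\mathcal S^k)$, because $P^k$ is the $A$-orthogonal projector onto $\mathcal S^k$.
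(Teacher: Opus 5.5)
Your proof is correct and follows essentially the same route as the paper. You write the projected stationarity condition in the $\nabla^2 f(x^\star)$-inner product with the same remainder $-(r_1+(B-A)s+H\|s\|s)$, and you identify $w^k=P^k\delta+s$, which equals the paper's $P^k[\nabla^2f(x^\star)]^{-1}r^k$. You also control $\|s^k\|$ through $\nabla^2 f(x^k)\succeq\tfrac12\mu_\star I$ from Lemma~\ref{lem:local-entry}, as the paper does.

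The differences are only cosmetic. You bound $\|w^k\|_\star$ by testing the Riesz identity with $u=w^k$ instead of using nonexpansiveness of $P^k$. You use $\|\nabla f(x^k)\|\le L_0\|\delta\|$ rather than $L_\star\|u^k\|+H\|u^k\|^2$, which gives an explicit $C_1$.
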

\begin{proof}
Write $u^k=x^k-x^\star$. The initial sublevel is convex and
contains both $x^k$ and $x^\star$. Assumption~\ref{ass:hessian}
therefore applies along their segment. In particular,
\begin{align}
 \|\nabla f(x^k)-\nabla^2f(x^\star)u^k\|
 &\le\int_0^1 2Ht\|u^k\|^2\,dt
 =H\|u^k\|^2,\label{eq:local-gradient-expansion}\\
 \|\nabla^2f(x^k)-\nabla^2f(x^\star)\|&\le2H\|u^k\|.
 \label{eq:local-hessian-expansion}
\end{align}
The first bound also gives
$\|\nabla f(x^k)\|\le L_\star\|u^k\|+H\|u^k\|^2=O(\|x^k-x^\star\|_\star)$.

Since $s^k\in\mathcal S^k$, taking its inner product with the
projected stationarity equation~\eqref{eq:model-stationarity}
gives
\[
 (s^k)^T\nabla^2f(x^k)s^k+H\|s^k\|^3
 =-\nabla f(x^k)^Ts^k.
\]
Lemma~\ref{lem:local-entry} and Cauchy--Schwarz imply
\[
 \tfrac12\mu_\star\|s^k\|^2
 \le\|\nabla f(x^k)\|\|s^k\|,
 \qquad
 \|s^k\|\le\frac{2}{\mu_\star}\|\nabla f(x^k)\|=O(\|x^k-x^\star\|_\star).
\]
The last inequality also holds when $s^k=0$.

Define the stationarity remainder
\[
 r^k=\nabla^2f(x^\star)u^k-\nabla f(x^k)
       +(\nabla^2f(x^\star)-\nabla^2f(x^k))s^k-H\|s^k\|s^k.
\]
The two Taylor bounds and the step-length estimate yield
\[
 \|r^k\|
 \le H\|u^k\|^2+2H\|u^k\|\|s^k\|+H\|s^k\|^2
 =O(\|x^k-x^\star\|_\star^2).
\]
For every $v\in\mathcal S^k$, equation~\eqref{eq:model-stationarity}
now gives
\[
 v^T\nabla^2f(x^\star)(u^k+s^k)=v^Tr^k.
\]
By the defining property of the projector orthogonal in the norm $\|\cdot\|_\star$,
$P^k(u^k+s^k)=P^k[\nabla^2f(x^\star)]^{-1}r^k$.
Also $(I-P^k)s^k=0$, because $s^k$ lies in the search space.
Together these identities prove~\eqref{eq:local-projection} with
$w^k=P^k[\nabla^2f(x^\star)]^{-1}r^k$. Orthogonal projection does not increase
the norm $\|\cdot\|_\star$, so
\[
 \|w^k\|_\star\le\|[\nabla^2f(x^\star)]^{-1}r^k\|_\star
 \le \mu_\star^{-1/2}\|r^k\|=O(\|x^k-x^\star\|_\star^2).
\]
All estimates are uniform for sufficiently large $k$, independently
of the dimension of $\mathcal S^k$ or the angles between its defining directions.
Finally, $(I-P^k)u^k$ is the error of the best approximation from
$\mathcal S^k$ in the norm $\|\cdot\|_\star$. The triangle inequality in
\eqref{eq:local-projection} gives~\eqref{eq:local-distance-bound}.
\end{proof}

It remains to show that the error decreases by a fixed factor.
We now use the gradient direction to bound the part outside the
search space. Together with the smaller remainder from the preceding
lemma, this gives local linear convergence for the same algorithm.

\begin{theorem}[Local linear rate without a supplied modulus]
\label{thm:local}
Under Assumptions~\ref{ass:convex}, \ref{ass:level},
\ref{ass:hessian}, and~\ref{ass:local}, the iterates converge to $x^\star$. Unless the method terminates
finitely, there is a finite constant $C$ such that, eventually,
\begin{equation}\label{eq:local-rate}
 \|x^{k+1}-x^\star\|_\star\le\frac{\kappa_\star-1}{\kappa_\star+1} \|x^k-x^\star\|_\star+C\|x^k-x^\star\|_\star^2.
\end{equation}
For every $q\in(\frac{\kappa_\star-1}{\kappa_\star+1},1)$, this gives
$\|x^{k+1}-x^\star\|_\star\le q\|x^k-x^\star\|_\star$ for all sufficiently large $k$. In the absence
of finite termination, the objective gaps also satisfy
\[
 \limsup_{k\to\infty}
 \frac{f(x^{k+1})-f_\star}{f(x^k)-f_\star}
 \le\left(\frac{\kappa_\star-1}{\kappa_\star+1}\right)^2.
\]
No strong-convexity parameter, restart, or test for entering a local
phase is supplied to the algorithm.
\end{theorem}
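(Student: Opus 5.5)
Convergence of the iterates to $x^\star$ is Lemma~\ref{lem:local-entry}, so only the contraction \eqref{eq:local-rate} and its consequences need proof. Start from \eqref{eq:local-distance-bound} of Lemma~\ref{lem:local-projection}, which reduces the problem to bounding $\dist_\star(u^k,\mathcal S^k)$, where $u^k=x^k-x^\star$. Since $\nabla f(x^k)\in\mathcal S^k$, every scaled gradient $t\nabla f(x^k)$ is admissible. Choose $t$ as the optimal Richardson step $t=2/(L_\star+\mu_\star)$ for the model operator $A=\nabla^2f(x^\star)$. This choice appears only in the analysis, so the algorithm never needs $\mu_\star$ or $L_\star$. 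Expansion \eqref{eq:local-gradient-expansion} gives $\nabla f(x^k)=Au^k+\rho^k$ with $\|\rho^k\|\le H\|u^k\|^2$. Hence
\[
 \dist_\star(u^k,\mathcal S^k)\le\|u^k-tAu^k\|_\star+t\|\rho^k\|_\star .
\]

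For the first term, $I-tA$ commutes with $A$ and is self-adjoint in the $\|\cdot\|_\star$ inner product. Its eigenvalues $1-t\lambda$, with $\lambda\in[\mu_\star,L_\star]$, have absolute value at most $(L_\star-\mu_\star)/(L_\star+\mu_\star)=(\kappa_\star-1)/(\kappa_\star+1)$. This bounds the first term by that factor times $\|u^k\|_\star$. For the second term, $\|\rho^k\|_\star\le\sqrt{L_\star}H\|u^k\|^2\le\sqrt{L_\star}H\mu_\star^{-1}\|u^k\|_\star^2$. Adding $C_1\|u^k\|_\star^2$ from Lemma~\ref{lem:local-projection} gives \eqref{eq:local-rate} with $C=C_1+2\sqrt{L_\star}H/(\mu_\star(L_\star+\mu_\star))$, valid for all $k$ large enough that that lemma applies.

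For the consequences, fix $q$ strictly between the contraction factor and $1$. Since $\|u^k\|_\star\to0$, eventually $C\|u^k\|_\star\le q-(\kappa_\star-1)/(\kappa_\star+1)$, which yields the linear rate. For the function values, use the Taylor expansion about $x^\star$ together with the Hessian Lipschitz bound: $f(x)-f_\star=\tfrac12\|x-x^\star\|_\star^2+O(\|x-x^\star\|^3)$. If the method does not terminate, $u^k\ne0$ for all $k$. Dividing then gives
\[
 \frac{f(x^{k+1})-f_\star}{f(x^k)-f_\star}
 =\frac{\|u^{k+1}\|_\star^2\,(1+O(\|u^{k+1}\|))}{\|u^k\|_\star^2\,(1+O(\|u^k\|))}.
\]
Combined with \eqref{eq:local-rate}, this gives the claimed limsup bound $((\kappa_\star-1)/(\kappa_\star+1))^2$.

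Most of the work is already contained in Lemma~\ref{lem:local-projection}. The only delicate point is that the comparison direction must be measured in the same $\|\cdot\|_\star$ geometry as the projector $P^k$. Otherwise the $\kappa_\star$-dependent Richardson factor would pick up an extra norm-equivalence constant $\sqrt{\kappa_\star}$. I would therefore keep every comparison in the $A$-norm and convert to Euclidean norms only inside the quadratic remainder terms.
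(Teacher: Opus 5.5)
Your proposal is correct and follows essentially the same route as the paper: compare against the admissible point $2\nabla f(x^k)/(L_\star+\mu_\star)\in\mathcal S^k$, bound the Richardson factor spectrally in the $\|\cdot\|_\star$ norm, absorb the gradient Taylor remainder into the quadratic term, combine with Lemma~\ref{lem:local-projection}, and finish with the Taylor expansion of the gap at $x^\star$. Your explicit constant $C=C_1+2\sqrt{L_\star}H/(\mu_\star(L_\star+\mu_\star))$ simply makes concrete the paper's $O(\|x^k-x^\star\|_\star^2)$ bookkeeping.
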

\begin{proof}
Convergence follows from Lemma~\ref{lem:local-entry}.
Because $\nabla f(x^k)\in\mathcal S^k$, we can approximate
$u^k=x^k-x^\star$ by
$2\nabla f(x^k)/(L_\star+\mu_\star)$ inside that space.
The approximation error is
\[
 u^k-\frac{2\nabla f(x^k)}{L_\star+\mu_\star}
 =\left(I-\frac{2\nabla^2f(x^\star)}{L_\star+\mu_\star}\right)u^k
   +\frac{2(\nabla^2f(x^\star)u^k-\nabla f(x^k))}{L_\star+\mu_\star}.
\]
The second term has size $O(\|x^k-x^\star\|_\star^2)$
in the norm $\|\cdot\|_\star$, by
\eqref{eq:local-gradient-expansion} and equivalence of the norms.
For the first term, diagonalize $\nabla^2f(x^\star)$. A component corresponding to
an eigenvalue $\lambda\in[\mu_\star,L_\star]$ is multiplied by
$1-2\lambda/(L_\star+\mu_\star)$, whose absolute value is at most
$\frac{\kappa_\star-1}{\kappa_\star+1}$. Therefore
\[
 \begin{aligned}
 \dist_\star(u^k,\mathcal S^k)
 &\le\left\|u^k-\frac{2\nabla f(x^k)}{L_\star+\mu_\star}\right\|_\star\\
 &\le\frac{\kappa_\star-1}{\kappa_\star+1} \|x^k-x^\star\|_\star+O(\|x^k-x^\star\|_\star^2).
 \end{aligned}
\]
Combining this with Lemma~\ref{lem:local-projection} proves
\eqref{eq:local-rate}. Given $q>\frac{\kappa_\star-1}{\kappa_\star+1}$, convergence ensures
$C\|x^k-x^\star\|_\star\le q-\frac{\kappa_\star-1}{\kappa_\star+1}$ eventually; dividing
\eqref{eq:local-rate} by $\|x^k-x^\star\|_\star$ then gives the claimed contraction.

Finally, the Hessian Lipschitz bound on the segment from $x^\star$
to $x^k$ gives the two-sided Taylor estimate
\[
 \left|f(x^k)-f_\star-\tfrac12\|x^k-x^\star\|_\star^2\right|
 \le\tfrac H3\|x^k-x^\star\|^3=O(\|x^k-x^\star\|_\star^3).
\]
Thus the objective gap is $\tfrac12\|x^k-x^\star\|_\star^2(1+O(\|x^k-x^\star\|_\star))$.
Substituting~\eqref{eq:local-rate} into the ratio of consecutive
gaps proves the last assertion.
\end{proof}

This is a local conclusion under an additional assumption at the
minimizer, rather than a global strong-convexity requirement.
The global theorem ensures entry, and the same fixed-$H$ iteration
then contracts automatically. The displayed factor is the usual
spectral interval bound from the gradient direction. The other
directions may improve the actual steps; the theorem does not
assert a faster dependence on the local condition number.

\subsubsection{A local PL condition allows a singular Hessian}

Positive definiteness gives the sharper geometric analysis above.
For linear objective convergence, a local
Polyak--\L{}ojasiewicz (PL) inequality is sufficient. The following
argument also proves that the iterates stay in the relevant
neighborhood; it does not assume entry near an arbitrarily selected
minimizer. We write $B(x,r)=\{y:\|y-x\|<r\}$ for the open ball.

\begin{theorem}[Local PL convergence and finite trajectory length]
\label{thm:local-pl}
Under Assumptions~\ref{ass:convex}--\ref{ass:hessian}, consider
a nonstationary run of the fixed-coefficient model steps, with
search spaces containing the current gradient and with restarts
permitted. Their decrease bounds are
\begin{equation}\label{eq:pl-two-budgets}
 \begin{aligned}
 f(x^k)-f(x^{k+1})&\ge
 \frac{\|\nabla f(x^k)\|^2}{3(L_0+\sqrt{2HL_0D_0})},\\
 f(x^k)-f(x^{k+1})&\ge
 \frac{2H}{3}\|x^{k+1}-x^k\|^3.
 \end{aligned}
\end{equation}
For an infinite run, suppose an accumulation point $\bar x$ has a
neighborhood on which, for some $r,\mu>0$,
\begin{equation}\label{eq:local-pl-condition}
 \|\nabla f(x)\|^2\ge2\mu(f(x)-f_\star)
 \quad\text{for all }x\in\mathcal C_0\cap B(\bar x,r).
\end{equation}
Decrease $\mu$ if necessary so that
$0<2\mu<3(L_0+\sqrt{2HL_0D_0})$, and put
\[
 q=1-\frac{2\mu}{3(L_0+\sqrt{2HL_0D_0})}\in(0,1).
\]
Then $\bar x$ is a minimizer and,
for some $k_0$ and every $k\ge k_0$,
\begin{equation}\label{eq:local-pl-conclusions}
 \begin{aligned}
 f(x^{k+1})-f_\star&\le q(f(x^k)-f_\star),\\
 \sum_{i=k}^\infty\|x^{i+1}-x^i\|
 &\le\frac{(f(x^k)-f_\star)^{1/3}}
              {(2H/3)^{1/3}(1-q^{1/3})},\\
 \|x^k-\bar x\|
 &\le\frac{(f(x^{k_0})-f_\star)^{1/3}}
              {(2H/3)^{1/3}(1-q^{1/3})}\,q^{(k-k_0)/3}.
 \end{aligned}
\end{equation}
Thus the gaps converge Q-linearly and the iterates converge
R-linearly to $\bar x$. Neither uniqueness of the minimizer nor a
positive-definite Hessian is required. The PL constant is used only
in the analysis. Finite termination at a stationary point is the
immediate alternative.
\end{theorem}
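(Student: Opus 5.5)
The two decrease bounds in \eqref{eq:pl-two-budgets} are one-step facts, so I will simply quote them. The first is Lemma~\ref{lem:gradient-decrease} and the second is the cubic term of \eqref{eq:two-decreases} in Lemma~\ref{lem:invariance}. Both apply to any fixed-$H$ model step from a point of $\mathcal C_0$ whose search space contains the current gradient. A restart only changes the search space or the aggregate, while monotonicity keeps every iterate in $\mathcal C_0$. Hence both bounds hold at every step of the concatenated run, regardless of restarts.

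Next I show that $\bar x$ is a minimizer. The values $f(x^k)$ are monotone and bounded below, so $d_k\to0$. The first budget then gives $\nabla f(x^k)\to0$. By continuity along a subsequence, $\nabla f(\bar x)=0$, and convexity makes $\bar x$ a minimizer, so $f(\bar x)=f_\star$ and $f(x^k)\downarrow f_\star$. The gap contraction holds at any $k$ with $x^k\in B(\bar x,r)$. There, \eqref{eq:local-pl-condition} and the first budget give $f(x^k)-f(x^{k+1})\ge\frac{2\mu}{3\mathcal L}(f(x^k)-f_\star)$, which rearranges to $\Delta_{k+1}\le q\Delta_k$. The restriction $2\mu<3\mathcal L$ only ensures $q>0$.

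The main obstacle is the trapping argument: once the iterates are near $\bar x$, they must stay in the PL ball forever. I would use the second budget in the form $\|x^{i+1}-x^i\|\le(3/(2H))^{1/3}d_i^{1/3}\le(3/(2H))^{1/3}\Delta_i^{1/3}$. While the iterates remain in the ball, $\Delta_i\le q^{i-k}\Delta_k$, so the step lengths form a geometric series in $q^{1/3}$. That series is bounded by $\Delta_k^{1/3}/((2H/3)^{1/3}(1-q^{1/3}))=:\rho(\Delta_k)$. Since $\bar x$ is an accumulation point and $\Delta_k\to0$, I choose $k_0$ with $\|x^{k_0}-\bar x\|<r/2$ and $\rho(\Delta_{k_0})<r/2$. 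Induction then shows that if $x^{k_0},\dots,x^{k}$ all lie in the ball, then $\|x^{k+1}-\bar x\|\le\|x^{k_0}-\bar x\|+\sum_{i=k_0}^{k}\|x^{i+1}-x^i\|<r$. So the whole tail stays inside, the contraction holds for all $k\ge k_0$, and the tail-sum estimate follows by summing the geometric bound starting at any $k\ge k_0$.

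Finite length implies that $x^k$ is Cauchy, so it converges to some limit. That limit must be $\bar x$, because $\bar x$ is an accumulation point. Therefore $\|x^k-\bar x\|\le\sum_{i\ge k}\|x^{i+1}-x^i\|\le\rho(\Delta_k)\le\rho(\Delta_{k_0})\,q^{(k-k_0)/3}$, which uses $\Delta_k\le q^{k-k_0}\Delta_{k_0}$. This is the third line of \eqref{eq:local-pl-conclusions}. No uniqueness of the minimizer or Hessian nondegeneracy enters anywhere, and finite termination at a stationary point is the trivial alternative.
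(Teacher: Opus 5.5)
Your proposal is correct and follows the paper's proof essentially step for step. You quote the two one-step budgets, which hold across restarts by gradient inclusion and monotonicity. You deduce that $\bar x$ is stationary, hence a minimizer, with $f(x^k)\downarrow f_\star$, and get $\Delta_{k+1}\le q\Delta_k$ from PL plus the gradient budget. You trap the tail in $B(\bar x,r)$ by choosing $k_0$ along the subsequence so that $\|x^{k_0}-\bar x\|$ plus the geometric step-length sum is below $r$. You then read off the finite-length, Cauchy-limit, and R-linear iterate bounds. The only differences are cosmetic: you split $r/2+r/2$ and use $d_k\to0$, where the paper uses a single combined inequality and summability of $\|\nabla f(x^k)\|^2$.
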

\begin{proof}
Within the proof, write
\[
 C_{\mathrm{grad}}=3(L_0+\sqrt{2HL_0D_0}),\qquad
 c_{\mathrm{step}}=2H/3.
\]
The argument uses only the two decrease bounds with these positive
coefficients. It therefore also applies when other variants supply
different positive coefficients, with $q=1-2\mu/C_{\mathrm{grad}}$.
Summing the first bound gives
$\sum_k\|\nabla f(x^k)\|^2<\infty$. All accumulation points are
therefore stationary, hence minimize $f$ by convexity. Compactness,
continuity, and monotonicity imply $f(x^k)\to f_\star$.
Write $\Delta_k=f(x^k)-f_\star$ within this proof.
At an iterate in the PL neighborhood, the two bounds give
\[
 \Delta_{k+1}\le q\Delta_k,\qquad
 \|x^{k+1}-x^k\|\le c_{\mathrm{step}}^{-1/3}\Delta_k^{1/3}.
\]
Choose an index $k_0$ along a subsequence converging to $\bar x$
such that
\[
 \|x^{k_0}-\bar x\|
 +\frac{\Delta_{k_0}^{1/3}}
       {c_{\mathrm{step}}^{1/3}(1-q^{1/3})}<r.
\]
If $x^K$ were the first subsequent iterate outside the ball, the
contraction holds at every preceding iterate, including $x^{K-1}$.
Consequently
\[
 \|x^K-\bar x\|
 \le\|x^{k_0}-\bar x\|
 +c_{\mathrm{step}}^{-1/3}\Delta_{k_0}^{1/3}
       \sum_{i=k_0}^{K-1}q^{(i-k_0)/3}<r,
\]
a contradiction. This proves invariance even if an individual
trial segment is not assumed to lie in the PL neighborhood.
Summing the step bound from any later $k$ proves the finite-length
estimate. The sequence is Cauchy, and its limit must equal its
accumulation point $\bar x$. The remaining estimate follows from
the geometric gap bound.
\end{proof}

The two bounds follow from Lemmas~\ref{lem:invariance}
and~\ref{lem:gradient-decrease} and remain valid across restarts.
Thus the result requires no positive-definite Hessian; it uses
only the stated local PL condition at an accumulation point.

For example,
\[
 f(u,v)=\frac{u^2}{2}+\frac{(|v|-1)_+^3}{3},
 \qquad t_+=\max\{t,0\},
\]
is convex and $C^2$, has a globally Lipschitz Hessian and compact
sublevel sets, and has minimizer set $\{0\}\times[-1,1]$.
Near every $(0,\bar v)$ with $|\bar v|<1$, it satisfies PL with
$\mu=1$, although its Hessian is $\operatorname{diag}(1,0)$.
No PL condition is asserted at $(0,\pm1)$. The accumulation-point
qualification in the theorem is essential: objective convergence
does not imply entry near an arbitrary prescribed minimizer.

\section{Beyond the baseline method}
\label{sec:extensions}

All results in this section keep the same cubic subspace step and reuse
the one-step estimates from Section~\ref{sec:method}. We first improve
the global bound and identify the oracle-model limit of that improvement.
We then change one ingredient at a time: the restart schedule, the
search space, and the accuracy of the reduced solve. Each variant also
admits the backtracking rule already given in Section~\ref{sec:adaptive}. This order separates the exact baseline theory from
the mechanisms that make it adaptive and finitely implementable.

\subsection{Removing the logarithm by deterministic restarts}
\label{sec:restarted-rate}

The basic method combines a global rate with automatic local linear
convergence. Its global estimate still contains a square root of a
logarithm. We now remove that factor by restarting the gradient
aggregate at prescribed iteration counts. The cubic step itself is
unchanged, so the same three-HVP bound and local linear guarantee
remain available.

The reason for restarting is visible in the preceding global proof.
The correction errors contribute a sum of weight ratios, as in
\eqref{eq:prefix-defect-young}. That sum grows logarithmically along
an uninterrupted run. On an epoch whose weights vary by only a
constant factor, it is bounded by a constant. We therefore start each
epoch with a larger weight and stop it before the weights have grown
too much. First we bound the progress within one such epoch; then we
show how successive epochs give the inverse-square rate.
We then state a matching lower bound for the specified vector-oracle
class, with its proof deferred to Appendix~\ref{app:oracle-lower-bound}.

\subsubsection{The restart rule}

Algorithm~\ref{alg:deterministic-restarts} states the complete schedule.
Here $j$ numbers the epochs, and $k$ numbers steps within the current
epoch; its points are $y^k$, with $y^0$ its starting point.
The step in line~\ref{line:det-step} may use either the supplied fixed
coefficient of Algorithm~\ref{alg:cubic} or the adaptive coefficient
selection of Algorithm~\ref{alg:adaptive-cubic}.

\begin{algorithm}[H]
\caption{Cubic SESOP with deterministic restarts}
\label{alg:deterministic-restarts}
\begin{algorithmic}[1]
\Require Initial point $x^0$; fixed or adaptive cubic-coefficient rule
\State Evaluate $\nabla f(x^0)$ and set $y^0\gets x^0$
\For{$j=0,1,\ldots$}
 \State $t\gets2^j$, $z^0\gets t\nabla f(y^0)$,
 $\omega_0\gets t$
 \For{$k=0,\ldots,4t$}
  \If{$\nabla f(y^k)=0$}
   \State \Return $y^k$
  \EndIf
  \State $\mathcal S^k\gets
   \Span\{\nabla f(y^k),y^k-y^0,z^k\}$
  \State Compute an exact cubic-model step $s^k\in\mathcal S^k$
  using the selected coefficient rule \Comment{at most three HVPs}
  \label{line:det-step}
  \State $y^{k+1}\gets y^k+s^k$ and evaluate $\nabla f(y^{k+1})$
  \State $\mathcal V^{k+1}\gets\Span\{y^{k+1}-y^0,z^k\}$
  \State $h^{k+1}\gets\nabla f(y^{k+1})-
       \Pi_{\mathcal V^{k+1}}\nabla f(y^{k+1})$
  \State $\omega_{k+1}\gets(1+\sqrt{1+4\omega_k^2})/2$
  \State $z^{k+1}\gets z^k+\omega_{k+1}h^{k+1}$
 \EndFor
 \State $y^0\gets y^{4t+1}$ \Comment{starting point of the next epoch}
\EndFor
\end{algorithmic}
\end{algorithm}

As before, stop whenever the current gradient is zero.
The endpoint gradient is already available for initializing the
next aggregate. No function value or estimated problem constant
is used to choose the epoch length. For adaptive steps, initialize the
coefficient as in Algorithm~\ref{alg:adaptive-cubic} and retain the last
accepted coefficient across epoch boundaries.

\subsubsection{Progress over one epoch}

The next lemma explains what is gained and what is lost by starting
with weight $t$ instead of one. The correction-error sum becomes
bounded over the epoch. In exchange, its estimate includes the
starting objective gap multiplied by $t^2-t$. The subsequent theorem
will show that the preceding epoch already makes this term small
enough.

\begin{lemma}[An epoch with an arbitrary initial weight]
\label{lem:epoch-bound}
Under Assumptions~\ref{ass:convex}--\ref{ass:hessian}, start an
epoch at $y^0\in\mathcal C_0$ with an integer weight $t\ge1$ and
aggregate $t\nabla f(y^0)$.
For every prefix of $k+1$ steps, where $0\le k\le4t$,
\begin{equation}\label{eq:epoch-bound}
\begin{aligned}
 \omega_k^2\bigl(f(y^{k+1})-f_\star\bigr)
 &\le(t^2-t)\bigl(f(y^0)-f_\star\bigr)
       +\frac32\bigl(L_0+\sqrt{2HL_0D_0}\bigr) D_0^2\\
 &\quad+\frac{32}{3}HD_0^3
          \sum_{i=1}^k\frac{\omega_i^3}{\omega_{i-1}^4}.
\end{aligned}
\end{equation}
In particular, the completed epoch satisfies
\begin{equation}\label{eq:completed-epoch}
 f(y^{4t+1})-f_\star
 \le\frac{(t^2-t)\bigl(f(y^0)-f_\star\bigr)
              +\tfrac32\bigl(L_0+\sqrt{2HL_0D_0}\bigr) D_0^2
              +\tfrac{1024}{3}HD_0^3}{9t^2}.
\end{equation}
The incoming previous displacement may be retained from before
the epoch.
\end{lemma}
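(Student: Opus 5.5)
The plan is to repeat the argument of Lemma~\ref{lem:reserve} and Lemma~\ref{lem:prefix} inside the epoch, with local indexing. Two things change: the weights now start at $\omega_0=t$, and the aggregate starts at $z^0=t\nabla f(y^0)$ instead of $\nabla f(y^0)$. Set $\Delta_k=f(y^k)-f_\star$ and $d_k=\Delta_k-\Delta_{k+1}$, and put $h^0=\nabla f(y^0)$, $e^0=0$. The uniform convention is to pretend that $\omega_{-1}$ satisfies $\omega_0^2-\omega_{-1}^2=\omega_0$, which gives $\omega_{-1}^2=t^2-t$. Then $z^k=\sum_{i=0}^k\omega_ih^i$ with $\omega_0=t$, and the weight identity $\omega_i=\omega_i^2-\omega_{i-1}^2$ holds for all $i\ge0$. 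Summation by parts then gives exactly
\[
 \omega_k^2\Delta_{k+1}+\sum_{i=0}^k\omega_i^2d_i
 =(t^2-t)\Delta_0+\sum_{i=0}^k\omega_i\Delta_i .
\]
This identity is the source of the term $(t^2-t)(f(y^0)-f_\star)$.

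\textbf{Step 1 (one-step bounds and the interval estimate).} Every step is a model step from a point of $\mathcal C_0$ whose search space contains the current gradient. Lemmas~\ref{lem:invariance}--\ref{lem:gradient-decrease} therefore give $\|\nabla f(y^i)\|^2\le3\mathcal L d_i$ and, since $\mathcal V^{i+1}\subseteq\mathcal S^i$, also $\|e^{i+1}\|\le(18H)^{1/3}d_i^{2/3}$. By construction $h^i\perp y^i-y^0$ and $h^i\perp z^{i-1}$, so the proof of Lemma~\ref{lem:reserve} with $\ell=0$ goes through verbatim. It gives $\omega_k^2\Delta_{k+1}+\tfrac34\sum\omega_i^2d_i\le(t^2-t)\Delta_0+(y^0-x^\star)^Tz^k-\|z^k\|^2/(12\mathcal L)+D_0\sum_{i\ge1}\omega_i\|e^i\|$. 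Completing the square bounds the middle pair by $3\mathcal L\|y^0-x^\star\|^2\le3\mathcal L D_0^2$.

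This is twice the displayed $\tfrac32\mathcal L D_0^2$. To recover the constant I would split the decrease differently, keeping a fraction $1/2$ instead of $1/4$ for the aggregate term. The defect absorption then costs $\tfrac12\omega_{i-1}^2d_{i-1}$ against only the remaining half, and $\tfrac12\sum_{i\le k}\omega_i^2d_i-\tfrac12\sum_{i\ge1}\omega_{i-1}^2d_{i-1}=\tfrac12\omega_k^2d_k\ge0$. Completing the square with $\|z^k\|^2/(6\mathcal L)$ gives $\tfrac32\mathcal L D_0^2$. I expect this to be the main bookkeeping point: check that the Young inequality \eqref{eq:weighted-young} with $\beta=\omega_{i-1}^2/2$ still yields $\tfrac{32}{3}HD_0^3\,\omega_i^3/\omega_{i-1}^4$, which is the stated third term.

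\textbf{Step 2 (the completed epoch).} For $k=4t$ I need two estimates. First, by Lemma~\ref{lem:weights} the increments satisfy $\omega_{i}-\omega_{i-1}\ge1/2$, so $\omega_{4t}\ge t+2t=3t$ and $\omega_{4t}^2\ge9t^2$. Second, $\sum_{i=1}^{4t}\omega_i^3/\omega_{i-1}^4\le32$. For this, $\omega_i\le\omega_{i-1}+1\le2\omega_{i-1}$ gives each term at most $8/\omega_{i-1}\le8/t$, and there are $4t$ terms. Hence $\tfrac{32}{3}HD_0^3\cdot32=\tfrac{1024}{3}HD_0^3$, and dividing by $\omega_{4t}^2\ge9t^2$ yields \eqref{eq:completed-epoch}.

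Finally, the argument never uses a previous-step direction. The optional retention of the incoming previous displacement only enlarges $\mathcal S^k$, and the one-step bounds hold for any search space containing the gradient, so that remark is immediate.
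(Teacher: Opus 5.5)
Your proposal is correct and follows the paper's proof essentially step for step. The convention $\omega_{-1}^2=t^2-t$ reproduces the summation-by-parts identity with boundary term $(t^2-t)\Delta_0$. Your half/half split of the weighted decrease, instead of the $\tfrac34/\tfrac14$ split of Lemma~\ref{lem:reserve}, is exactly how the paper obtains $\tfrac32\mathcal L D_0^2$ and absorbs the defects via \eqref{eq:weighted-young}. The bounds $\omega_{4t}\ge3t$ and $\sum_{i=1}^{4t}\omega_i^3/\omega_{i-1}^4\le32$ are the ones used there. The only thing you leave implicit is the stationary case $\nabla f(y^0)=0$, handled by constant extension, which the paper disposes of first so that $\mathcal L>0$ in the divisions.
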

\begin{proof}
Abbreviate $\mathcal L=L_0+\sqrt{2HL_0D_0}$ within this proof.
If $\nabla f(y^0)=0$, constant extension gives both estimates
immediately. Otherwise $\mathcal L>0$ by
\eqref{eq:gradient-level-bound}. Gradient inclusion gives
monotonicity and invariance by Lemma~\ref{lem:invariance}.
Moreover,
\[
 y^{i+1}-y^0=(y^i-y^0)+s^i\in\mathcal S^i,
 \qquad
 \Span\{y^{i+1}-y^0,z^i\}\subseteq\mathcal S^i.
\]
Thus the one-step bounds \eqref{eq:global-two-budgets} and the
corrected-gradient orthogonalities remain valid with $y^0$ as the epoch's starting point.
None of these facts requires the incoming displacement to be zero.

For this proof, write
\[
 \Delta_i=f(y^i)-f_\star,\qquad
 d_i=f(y^i)-f(y^{i+1})\ge0,
\]
and let $h^i$ and $e^i$ denote the corrected gradient and the
removed component within this epoch. Set
$h^0=\nabla f(y^0)$, $e^0=0$, and $z^{-1}=0$.
The weight recurrence gives
\[
 \omega_i^2-\omega_{i-1}^2=\omega_i\quad(i\ge1),
 \qquad
 t+\frac i2\le\omega_i\le t+i.
\]
The increment bounds follow from the same calculation as in
Lemma~\ref{lem:weights}. Telescoping the difference identity gives
$\sum_{i=0}^k\omega_i=\omega_k^2-t^2+t$.
Consequently, summation by parts yields
\begin{equation}\label{eq:epoch-gap-identity}
 \omega_k^2\Delta_{k+1}
 +\sum_{i=0}^k\omega_i^2d_i
 =(t^2-t)\Delta_0+\sum_{i=0}^k\omega_i\Delta_i.
\end{equation}
This is the only change to the starting boundary term in
\eqref{eq:weighted-gap-identity}.

As in the proof of Lemma~\ref{lem:reserve}, convexity and
$(h^i)^T(y^i-y^0)=0$ imply
\[
 \sum_{i=0}^k\omega_i\Delta_i
 \le(y^0-x^\star)^Tz^k
      +D_0\sum_{i=1}^k\omega_i\|e^i\|.
\]
The orthogonality $(h^i)^Tz^{i-1}=0$ gives
\[
 \|z^k\|^2
 =\sum_{i=0}^k\omega_i^2\|h^i\|^2
 \le3\mathcal L\sum_{i=0}^k\omega_i^2d_i.
\]
Use half of the weighted decrease in
\eqref{eq:epoch-gap-identity} to bound the aggregate. Completing
the square gives
\begin{align*}
 \omega_k^2\Delta_{k+1}
 +\frac12\sum_{i=0}^k\omega_i^2d_i
 &\le(t^2-t)\Delta_0+(y^0-x^\star)^Tz^k
       -\frac{\|z^k\|^2}{6\mathcal L}
       +D_0\sum_{i=1}^k\omega_i\|e^i\|\\
 &\le(t^2-t)\Delta_0
       +\frac32\mathcal L\|y^0-x^\star\|^2
       +D_0\sum_{i=1}^k\omega_i\|e^i\|.
\end{align*}
The remaining half of the decrease pays for the correction errors.
Indeed, the defect bound and \eqref{eq:weighted-young} give
\[
 D_0\omega_i\|e^i\|
 -\frac12\omega_{i-1}^2d_{i-1}
 \le\frac{32}{3}HD_0^3
       \frac{\omega_i^3}{\omega_{i-1}^4}
 \qquad(i\ge1).
\]
Summing this inequality and discarding the remaining nonnegative
term $\omega_k^2d_k/2$ proves \eqref{eq:epoch-bound}.

For a completed epoch, $k=4t$ and $\omega_{4t}\ge3t$.
Since $\omega_{i-1}\ge t\ge1$ and
$\omega_i\le\omega_{i-1}+1\le2\omega_{i-1}$,
\[
 \sum_{i=1}^{4t}\frac{\omega_i^3}{\omega_{i-1}^4}
 \le\sum_{i=1}^{4t}\frac8{\omega_{i-1}}
 \le32.
\]
Substitution in \eqref{eq:epoch-bound} proves
\eqref{eq:completed-epoch}.
\end{proof}

The proof uses only bounds on the sublevel containing the epoch.
Consequently, it also applies with $L_0$ and $D_0$ replaced by the
Hessian bound and radius of the smaller set
$\{x:f(x)\le f(y^0)\}$, and with the corresponding value of
$\mathcal L$. This observation will later let us estimate an epoch
more sharply near the minimizer.

\subsubsection{Combining the epochs}

We now use the one-epoch estimate to prove the global rate.
Doubling $t$ asks the next epoch to start with a gap four times
smaller. Inequality~\eqref{eq:completed-epoch} provides exactly
this reduction. Monotonicity then extends the bound from epoch
boundaries to every iterate.

\begin{theorem}[Inverse-square convergence with deterministic epochs]
\label{thm:restarted-rate}
Under Assumptions~\ref{ass:convex}--\ref{ass:hessian}, use the
restart rule above with exact model solves and projections. Index
the concatenated trajectory by the total number $K$ of cubic steps.
It is monotone, remains in $\mathcal C_0$, and satisfies
\begin{equation}\label{eq:restarted-rate}
 f(x^K)-f_\star\le
 \frac{81D_0^2}{(K+1)^2}
 \left[\frac32\bigl(L_0+\sqrt{2HL_0D_0}\bigr)+\frac{1024}{3}HD_0\right]
 \qquad(K\ge0),
\end{equation}
Under Assumption~\ref{ass:local}, it also retains the local linear
guarantee of Theorem~\ref{thm:local}.
\end{theorem}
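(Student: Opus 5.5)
The plan is to run an induction over epoch boundaries using the completed-epoch estimate \eqref{eq:completed-epoch}. Monotonicity then transfers the boundary bounds to every intermediate iterate, and the local statement is inherited from the proofs of Lemma~\ref{lem:local-projection} and Theorem~\ref{thm:local}.

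\textbf{Setup.} Abbreviate $\mathcal L=L_0+\sqrt{2HL_0D_0}$ and
\[
A=\tfrac32\mathcal L D_0^2+\tfrac{1024}{3}HD_0^3 .
\]
Let $t_j=2^j$, and let $G_j$ be the gap at the starting point of epoch $j$. Before epoch $j$ there are $N_j=\sum_{i<j}(4\cdot2^i+1)=4(2^j-1)+j$ cubic steps. Lemma~\ref{lem:invariance} applies at every step because $\nabla f(y^k)\in\mathcal S^k$. It gives well-definedness, monotonicity and invariance of $\mathcal C_0$ across restarts, since a restart only changes the aggregate and the reference point. Lemma~\ref{lem:epoch-bound} is stated for any $y^0\in\mathcal C_0$, so it applies to every epoch with the global constants $L_0,D_0$. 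If the method terminates, constant extension makes all claims trivial, so I assume an infinite run.

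\textbf{Boundary induction.} For epoch $0$ we have $t=1$, so the factor $t^2-t$ vanishes. Then \eqref{eq:completed-epoch} gives $G_1\le A/9\le A/4$. I claim $G_j\le A/4^j$ for all $j\ge1$. Assuming it for $j$, apply \eqref{eq:completed-epoch} with $t=2^j$:
\[
G_{j+1}\le\frac{(4^j-2^j)G_j+A}{9\cdot4^j}\le\frac{2A}{9\cdot 4^j}\le\frac{A}{4^{j+1}},
\]
because $2/9\le1/4$. For $j=0$ I do not bound $G_0$ through $\Delta_0\le3\mathcal L\|x^0-x^\star\|^2$, since that can exceed $A$. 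Instead, inside epoch $0$ I use \eqref{eq:epoch-bound} with $t=1$. Because $\omega_i\ge t\ge1$ and $\omega_i\le2\omega_{i-1}$, the sum $\sum_{i\le k}\omega_i^3/\omega_{i-1}^4$ is at most $32$, so the right-hand side is at most $A$. Hence $f(y^{k+1})-f_\star\le A/\omega_k^2\le 4A/(k+2)^2$ for the first $5$ steps, which is within the claimed form. The case $K=0$ is covered by $\Delta_0\le\Delta$-bounds together with the constant $81$; if needed I check it directly from the prefix estimate.

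\textbf{Transfer to all $K$.} For $K$ inside epoch $j\ge1$, i.e.\ $N_j\le K<N_{j+1}$, monotonicity gives $f(x^K)-f_\star\le G_j\le A/4^j$. The remaining task is the counting estimate $K+1\le N_{j+1}=4(2^{j+1}-1)+j+1\le 9\cdot 2^j$, which holds because $j+1\le 2^j+4$. This yields $(K+1)^2\le 81\cdot4^j$, hence
\[
f(x^K)-f_\star\le \frac{81A}{(K+1)^2},
\]
which is \eqref{eq:restarted-rate}. I expect this bookkeeping, namely fitting the $+1$ per epoch, epoch $0$ and $K=0$ under the single constant $81$, to be the only fiddly point. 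If the counting is off by a step, I would instead compare against $G_{j}$ with the sharper intermediate bound \eqref{eq:epoch-bound} inside the epoch.

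\textbf{Local guarantee.} The proofs of Lemma~\ref{lem:local-entry}, Lemma~\ref{lem:local-projection} and Theorem~\ref{thm:local} use only three facts about the iteration: $f(x^K)\to f_\star$ (now supplied by \eqref{eq:restarted-rate}), invariance of $\mathcal C_0$, and the property that each step is the exact cubic-model minimizer over a space containing the current gradient. None of these depends on the aggregate or its restarts. Hence the same contraction \eqref{eq:local-rate} holds along the concatenated trajectory.
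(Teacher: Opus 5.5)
Your proposal is correct and is essentially the paper's proof: the same boundary induction through \eqref{eq:completed-epoch} with $2/9\le1/4$, the same count $K+1\le N_{j+1}\le 9\cdot2^j$, and the same inheritance of the local argument from gradient inclusion and the fixed coefficient. The only difference is epoch~$0$, which the paper avoids treating separately by noting $f(x^0)-f_\star\le\tfrac{L_0}{2}\|x^0-x^\star\|^2\le\tfrac{L_0D_0^2}{2}\le A$, so its induction starts at $j=0$; your route via $t^2-t=0$ also works, and your loosely stated case $K=0$ does follow from $\Delta_0\le3\mathcal L D_0^2\le2A$.
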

After initialization, each step uses one new gradient and at most
three HVPs, with no function evaluations. Besides the supplied $H$,
the rule needs no smoothness bound, strong-convexity parameter,
radius, or optimal value.

\begin{proof}
For this induction, abbreviate the numerator of
\eqref{eq:restarted-rate} by
\[
 C_{\mathrm{restart}}=81D_0^2
 \left[\tfrac32\bigl(L_0+\sqrt{2HL_0D_0}\bigr)
                      +\tfrac{1024}{3}HD_0\right].
\]
Gradient inclusion is preserved at every restart, so
Lemma~\ref{lem:invariance} gives monotonicity and invariance
throughout the run. If a zero gradient is encountered, constant
extension preserves the claimed bound and requires no further
oracle calls. We therefore prove the estimate on the remaining
nonstationary trajectory.

The completed-epoch bound has additive term
$C_{\mathrm{restart}}/81$, where $C_{\mathrm{restart}}$ is the numerator
of the stated rate.
Suppose an epoch with initial weight $t$ starts at $y^0$ with
$f(y^0)-f_\star\le C_{\mathrm{restart}}/(81t^2)$. Then
\eqref{eq:completed-epoch} gives
\[
 f(y^{4t+1})-f_\star
 \le\frac{(t^2-t)C_{\mathrm{restart}}/(81t^2)+(C_{\mathrm{restart}}/81)}{9t^2}
 \le\frac{2(C_{\mathrm{restart}}/81)}{9t^2}
 \le\frac{C_{\mathrm{restart}}}{81(2t)^2}.
\]
This is the required starting estimate for the next epoch.
For the first epoch, $t=1$. Integrating the Hessian on the segment
from $x^\star$ to $x^0$ gives
\[
 f(x^0)-f_\star
 \le\frac{L_0}{2}\|x^0-x^\star\|^2
 \le\frac{L_0D_0^2}{2}\le C_{\mathrm{restart}}/81.
\]
Induction proves the starting estimate in every epoch. Neither the gap nor $C_{\mathrm{restart}}$ is used by the algorithm.

Let $\tau_j$ be the global index at the start of epoch $j$.
Summing the preceding epoch lengths gives
\[
 \tau_j=\sum_{\ell=0}^{j-1}(4\cdot2^\ell+1)
       =4(2^j-1)+j.
\]
If $\tau_j\le K<\tau_{j+1}$ and $t=2^j$, monotonicity and the
starting estimate give $f(x^K)-f_\star\le C_{\mathrm{restart}}/(81t^2)$.
Moreover,
\[
 K+1\le\tau_{j+1}=8t+j-3\le9t,
\]
where $j\le2^j=t$ for every integer $j\ge0$.
Thus $f(x^K)-f_\star\le C_{\mathrm{restart}}/(K+1)^2$, which is
\eqref{eq:restarted-rate}.

Each restarted space has at most three dimensions. Forming its
reduced Hessian and updating the gradient therefore has the same
oracle count as Proposition~\ref{prop:oracles}. Restarting uses
the current gradient already in memory and requires no additional
oracle call.

Finally, under Assumption~\ref{ass:local}, the minimizer is unique
by the argument in Lemma~\ref{lem:local-entry}. Compactness and
\eqref{eq:restarted-rate} imply $x^K\to x^\star$. The local
projection estimate in Lemma~\ref{lem:local-projection} and the
proof of Theorem~\ref{thm:local} use only gradient inclusion,
the fixed cubic coefficient, and the local Hessian bounds. They
do not depend on a fixed epoch starting point or aggregate. They consequently
apply across the epoch boundaries and give the same local rate.
\end{proof}

The increasing initial weight is part of this restart rule.
Resetting $\omega_0$ to one at every epoch would leave a logarithm
of the epoch length in \eqref{eq:epoch-bound}. Thus
Theorem~\ref{thm:restarted-rate} proves a logarithm-free bound for
the stated restarted method; the bound for the uninterrupted
Algorithm~\ref{alg:cubic} remains Theorem~\ref{thm:main}.

\subsection{Why the inverse-square global order is sharp}
\label{sec:oracle-lower-bound}

The restart rule gives a global inverse-square rate with a constant
number of new gradients and HVPs per step. The next theorem shows
that this exponent cannot be improved uniformly over the problem
class by adding momentum or retaining more history. We specify the
oracle model in the statement: every query and output must stay in
the linear span of the vector information already obtained. The
budget counts all gradient and HVP calls, including inner calls.

We adapt the classical extremal conjugate-gradient example of
Meinardus~\cite{meinardus1963}, described in detail by
Li~\cite{li2008meinardus}, within the sharp polynomial framework
of Greenbaum~\cite{greenbaum1979}. The adaptation fixes the problem
constants, adds a smooth nonquadratic coordinate, and counts both
gradients and HVPs. Its purpose is to establish sharpness for our
oracle model using this classical construction.

\newcommand{\OracleLowerBoundStatement}{%
A vector-oracle call returns either $\nabla f(x)$ or
$\nabla^2f(x)v$. Consider algorithms starting at $0$ for which every
query point, HVP input vector, and output point belongs to the linear
span of previously returned vectors; before the first call this span
is $\{0\}$. Scalar coefficients may depend arbitrarily on the observed
information. Memory, scalar computations, and function-value queries
within the available span are unrestricted. Every gradient or HVP
evaluation counts, including the initial gradient and any inner calls.

For every integer call budget $M\ge1$, there exists a convex nonquadratic
$C^\infty$ function $f:\R^{M+2}\to\R$ with compact sublevel sets
and a unique minimizer $x^\star$, such that
\[
 0\preceq\nabla^2f(x)\preceq3I\qquad(x\in\R^{M+2}).
\]
Its smallest global Hessian Lipschitz constant is exactly $2$,
so $H=1$ is a valid supplied bound. Its third derivative is globally
$32$-Lipschitz in the induced multilinear norm. For
$\mathcal C_0=\{x:f(x)\le f(0)\}$, the level-set constants are
\[
 L_0=\max_{x\in\mathcal C_0}\|\nabla^2f(x)\|=1,
 \qquad D_0=\max_{x\in\mathcal C_0}\|x-x^\star\|=1.
\]
On this single function, every algorithm in the stated class and
every output $x$ obtained after at most $M$ vector-oracle calls satisfy
\[
 f(x)-f_\star\ge\frac{1}{2\cosh^2(2)(M+1)^2},
 \qquad f_\star=f(x^\star).
\]
Monotonicity is not required. In particular, a uniformly bounded
number of gradient and HVP calls per outer iteration cannot yield
a uniform dimension-independent $O(k^{-3})$ guarantee with constants
depending only on the displayed smoothness and level-set bounds.
}

\begin{theorem}[Adapted classical Krylov lower bound]
\label{thm:oracle-lower-bound}
\OracleLowerBoundStatement
\end{theorem}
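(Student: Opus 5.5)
The plan is to build a separable function $f(y,u)=q(y)+g(u)$ on $\R^{M+1}\times\R$. The quadratic part will be $q(y)=\tfrac12 y^TAy-b^Ty$ with $A=\operatorname{diag}(\lambda_0,\dots,\lambda_M)$, $0<\lambda_i\le1$. The scalar part $g$ will be a convex $C^\infty$ function with $g(0)=g'(0)=g''(0)=0$, $0\le g''\le3$, $|g'''|\le2$ with equality attained (so that the smallest Hessian Lipschitz constant is exactly $2$), and $g''''$ bounded by $32$. The role of $g$ is only to make $f$ nonquadratic and to fix the stated constants. Because $g'(0)=g''(0)=0$, it will never be seen by a span-restricted algorithm started at $0$. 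I would pick $g$ from an explicit family, for instance a suitably scaled even function built from $\log\cosh$ or a quartic-then-quadratic smoothing, tuned so that $g''\le1$ on the portion of $\mathcal C_0$ where $u$ can lie while $\sup g''=3$ globally. This gives $L_0=1$ and keeps $D_0$ governed by the $y$-block.

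\textbf{Step 1 (Krylov confinement).} By induction on the number $m$ of calls, I will show that all returned vectors lie in $\{0\}\times\mathcal K_m(A,b)$, where $\mathcal K_m=\Span\{b,Ab,\dots,A^{m-1}b\}$. A query point $x=(y,0)$ with $y\in\mathcal K_m$ gives $\nabla f(x)=(Ay-b,\,g'(0))=(Ay-b,0)\in\mathcal K_{m+1}$. An HVP at such a point in direction $(v,0)$ returns $(Av,\,g''(0)\cdot0)=(Av,0)\in\mathcal K_{m+1}$. Hence after at most $M$ calls, any output has the form $(y,0)$ with $y\in\mathcal K_M$. Function-value queries and arbitrary scalar coefficients do not enlarge the span, and memory plays no role. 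The minimizer is $x^\star=(A^{-1}b,u^\star)$, where $u^\star=0$ is the unique minimizer of $g$. Therefore $y-y^\star=-p(A)y^\star$ for some polynomial $p$ of degree at most $M$ with $p(0)=1$, and
\[
 f(x)-f_\star\ \ge\ \tfrac12\min_{p(0)=1,\ \deg p\le M}\ \sum_{i=0}^M\lambda_i\,p(\lambda_i)^2(y^\star_i)^2 .
\]

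\textbf{Step 2 (the extremal Meinardus/Greenbaum instance).} Fix $\delta=c/(M+1)^2$ and let the $\lambda_i$ be the $M+1$ extremal points of the Chebyshev polynomial $T_M$ mapped affinely onto $[\delta,1]$. The weights $\lambda_i(y^\star_i)^2$ will be chosen proportional to the discrete equioscillation measure, namely the Chebyshev–Lobatto quadrature weights. By Greenbaum's sharpness argument, using a dual certificate or orthogonality of $T_M$ against lower-degree polynomials under that measure, the weighted least-squares minimum then equals the minimax value. That value is
\[
 \|y^\star\|_A^2 \big/ T_M\!\bigl(\tfrac{1+\delta}{1-\delta}\bigr)^2 .
\]
Since $\operatorname{arccosh}\bigl(\tfrac{1+\delta}{1-\delta}\bigr)\le 2\sqrt\delta\,(1+o(1))$, a choice of $c$ of order $1$ gives $T_M\le\cosh(2)$ uniformly in $M$. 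It then remains to normalize $b$ so that $\tfrac12\|y^\star\|_A^2\ge 1/(2(M+1)^2)$ while $\max_{\mathcal C_0}\|x-x^\star\|=1$.

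\textbf{Step 3 (constants).} I will verify $L_0=1$ (the maximum eigenvalue of $A$ is $1$, and $g''\le1$ on the relevant range), $D_0=1$, compactness of sublevel sets (from $\lambda_i>0$ and the growth of $g$), uniqueness of the minimizer, and the global bounds $0\preceq\nabla^2f\preceq3I$ and Lipschitz constants $2$ and $32$. All of these come from the diagonal structure and the scalar properties of $g$.

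I expect the main obstacle to be Step 2 combined with the normalization in Step 3: getting the constants to come out exactly. The quadrature weights force $y^\star_i$ to scale like $\lambda_i^{-1/2}$ times the weights, so the smallest-eigenvalue coordinate dominates $\|y^\star\|$. Meanwhile $D_0$ is the radius of the ellipsoidal sublevel set through $0$ about $y^\star$, which is $\sqrt{2(q(0)-q_\star)/\lambda_{\min}}=\|y^\star\|_A/\sqrt\delta$. Setting this equal to $1$ gives $\tfrac12\|y^\star\|_A^2=\delta/2=c/(2(M+1)^2)$. Therefore I must check that $c\ge1$ is compatible with $T_M\bigl(\tfrac{1+\delta}{1-\delta}\bigr)\le\cosh(2)$ for every $M\ge1$. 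I would first try $c=1$ and bound $\operatorname{arccosh}\bigl(\tfrac{1+\delta}{1-\delta}\bigr)\le 2/(M+1)\cdot(\text{slack})\le 2/M$ by a direct series estimate. I would also make sure that the $u$-direction does not enlarge $D_0$, by choosing $g$ so that its sublevel set at height $f(0)-f_\star$ has radius at most $1$.
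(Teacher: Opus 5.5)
Your architecture matches the paper's. It confines outputs to $\{0\}\times\mathcal K_M$ by the span induction, places the spectrum at the Chebyshev extrema on $[\delta,1]$ with $\delta=(M+1)^{-2}$ (so $M\log(1+2/M)\le2$ gives $T_M((1+\delta)/(1-\delta))\le\cosh 2$), and appends a smooth coordinate the algorithm never sees. Step~1 and the choice $c=1$ are correct. Two specific choices in your construction are wrong, however, and the first one destroys the rate.

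\textbf{The weights in Step~2.} Write the constraint as $\sum_i\ell_i(0)p(\lambda_i)=1$, with $\ell_i$ the Lagrange basis at the nodes. Then the minimum of $\sum_iw_ip(\lambda_i)^2$ over $\deg p\le M$, $p(0)=1$, is exactly $1/\sum_i(\ell_i(0)^2/w_i)$. By Cauchy--Schwarz this equals the squared minimax value $1/W_M^2$, with $W_M=\sum_i|\ell_i(0)|=T_M((1+\delta)/(1-\delta))$, if and only if $w_i\propto|\ell_i(0)|$. At the Chebyshev extrema $|\ell_i(0)|\propto c_i/\lambda_i$, where $c_i$ are the Lobatto weights ($\tfrac12$ at the endpoints, $1$ inside). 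So the Lobatto weights belong on $b_i^2=(Ay^\star)_i^2$, which is Meinardus' right-hand side, not on $\lambda_i(y^\star_i)^2$. Your remark that $y^\star_i$ scales like $\lambda_i^{-1/2}$ times the weights confirms this reading; the correct scaling is $\sqrt{c_i}/\lambda_i$.

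The orthogonality you invoke is the wrong one. The first-order condition for the rescaled $T_M$ to be the constrained least-squares minimizer is orthogonality, under $w$, against $\lambda s(\lambda)$ with $\deg s\le M-1$. Discrete Chebyshev orthogonality delivers this only when $w_i\lambda_i\propto c_i$. With your choice $w_i=c_i/M$, the least-squares minimum is the squared minimax value times $(\sum_ic_i/\lambda_i)^2/(M\sum_ic_i/\lambda_i^2)$. Since $1/\lambda_{M-j}\approx M^2/(1+\pi^2j^2/4)$, this factor is of order $1/M$. After your normalization $\|y^\star\|_A^2=\delta$, the best point of $\mathcal K_M$ therefore has gap of order $M^{-3}$; it is reachable by one gradient at $0$ followed by $M-1$ HVPs. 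So your instance cannot give the stated bound.

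The repair is the paper's choice $w_i=|\ell_i(0)|/W_M$, $y^\star_i=\sqrt{\delta w_i/\lambda_i}$, together with the one-line bound $1=(\sum_i\ell_i(0)p(\lambda_i))^2\le W_M^2\sum_iw_ip(\lambda_i)^2$. This needs no appeal to Greenbaum's certificate.

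\textbf{The scalar part.} Imposing $g''(0)=0$ is unnecessary and incompatible with $D_0=1$.

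It is unnecessary because the Hessian is block diagonal. The last component of $\nabla^2f(y,0)(v,0)$ is $g''(0)\cdot0=0$ whatever $g''(0)$ is, so only $g'(0)=0$ is used.

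It is incompatible because a convex $g$ with $g(0)=g'(0)=g''(0)=0$ satisfies $g(u)=O(u^4)$. Let $e$ be the unit eigenvector of $A$ for $\delta$, and put $\rho^2=1-2g(u)/\delta$. Since $f(0)-f_\star=\delta/2$, the point $(y^\star+\rho e,u)$ lies in $\mathcal C_0$. Its squared distance to $x^\star$ is $1-2g(u)/\delta+u^2>1$ for small $u\ne0$. Hence bounding the radius of the $u$-sublevel set alone is the wrong test; you need $g(u)\ge\tfrac\delta2u^2$ wherever $g(u)\le\delta/2$.

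In your notation, the paper uses $g(u)=\tfrac12u^2+\phi(u-2)$, where $\phi'''=2\eta$ for a bump $\eta$ supported in $(0,1)$. Thus $\phi$ and all its derivatives vanish on $(-\infty,0]$. On $\mathcal C_0$ one has $u^2\le\delta\le\tfrac14$, so $f$ is exactly quadratic there. Consequently $\mathcal C_0$ is an ellipsoid whose smallest curvature $\delta$ lies in the $y$-block, which gives $L_0=D_0=1$ at once. Meanwhile $\nabla^2f\preceq3I$, the exact Hessian-Lipschitz constant $2$, and the bound $32$ all come from the bump, far from $\mathcal C_0$.
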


The full statement and proof are given in
Appendix~\ref{app:oracle-lower-bound}. For the fixed-coefficient
method and its restart variants, the total vector-oracle budget
through $K$ steps is at most $1+4K$. Thus the inverse-square
exponent is sharp within this oracle class. The theorem allows
momentum, orthogonalization, HVP outputs as new directions, and exact
minimization inside the available span. It excludes external probe
directions and transformations that create vectors outside that span.
The hard function and its dimension depend on $M$; this is a global
worst-case statement and does not restrict the faster local rates
proved for individual problems.

\subsection{Automatic accelerated local convergence by restart control}
\label{sec:automatic-acceleration}

The basic method already converges linearly near a nondegenerate
minimizer. When the curvature varies greatly between directions,
its guaranteed contraction can nevertheless be slow. We now improve
this local guarantee by choosing how often to restart. The target is
an iteration count proportional to the square root of the local
condition number, rather than to the condition number itself.
The method will not need to know that number.

There are two questions to answer. First, how long should an epoch
be near the minimizer? The next lemma shows that an appropriate
epoch reduces the objective gap by a fixed factor. Second, how can
we find such a length without knowing either the curvature or the
optimal value? We compare the decreases observed over consecutive
epochs and increase the length when those decreases do not shrink
fast enough.

This length search must also work when the starting point is far
from a minimizer. We therefore alternate it with the deterministic
schedule from Section~\ref{sec:restarted-rate}. Those deterministic
stages retain the global inverse-square guarantee. The stages that
choose their length from observed decreases provide the faster local
rate. Both stages advance the same sequence of points, using the
same cubic model step; they do not run competing copies of the method.

Choosing restarts from observed progress has precedents for accelerated
gradient methods \cite{fercoqqu2019,alamo2022}. The results below give
the global and local guarantees for this cubic SESOP controller.
All model solves remain exact, as in the basic algorithm.
Assumptions~\ref{ass:convex}--\ref{ass:hessian} remain in force.
The local condition number is
$\kappa_\star=L_\star/\mu_\star$, with $\mu_\star,L_\star$ the extreme
Hessian eigenvalues defined in Section~\ref{sec:local}.
It is used only to state and prove the rate.

\subsubsection{What one epoch gives near a minimizer}

We first convert the one-epoch estimate into a reduction of the
current objective gap. Near the minimizer, strong convexity bounds
the size of the current sublevel in terms of that gap. This is the
link between the global epoch estimate and a local contraction.

\begin{lemma}[A locally contracting epoch]
\label{lem:local-epoch}
Start an epoch at $y\in\mathcal C_0$, with initial weight $t\ge1$
and aggregate $t\nabla f(y)$. Perform $4t+1$ steps, where $t$ is an
integer, and denote the endpoint by $y^+$.
Suppose the current sublevel set
$\mathcal C(y)=\{x:f(x)\le f(y)\}$ is contained in a neighborhood
where $\nabla^2 f(x)\succeq\mu I$, with $\mu>0$. Define
\[
 D=\max_{x\in\mathcal C(y)}\|x-x^\star\|,\qquad
 L=\max_{x\in\mathcal C(y)}\|\nabla^2 f(x)\|.
\]
Here $D$ is the radius of the current sublevel and $L$ is its
Hessian norm bound. For $\Delta=f(y)-f_\star$,
\begin{equation}\label{eq:local-epoch-factor}
 f(y^+)-f_\star
 \le
 \frac{t^2-t+3\bigl(L+\sqrt{4HLD}\bigr)/\mu
                 +(4096/3)HD/\mu}{9t^2}\,\Delta.
\end{equation}
The incoming previous displacement may be retained from before
the epoch.
\end{lemma}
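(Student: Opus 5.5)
The plan is to apply Lemma~\ref{lem:epoch-bound} with the level-set constants of the smaller set $\mathcal C(y)$ in place of those of $\mathcal C_0$, and then convert the resulting additive terms into multiples of $\Delta$ using local strong convexity. If $\nabla f(y)=0$, then $y$ is a minimizer, $\Delta=0$, and constant extension gives \eqref{eq:local-epoch-factor}. Otherwise, let $y=y^0,y^1,\ldots,y^{4t+1}=y^+$ denote the epoch's iterates.

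\textbf{Step 1: the epoch stays in $\mathcal C(y)$.} Gradient inclusion and Lemma~\ref{lem:invariance} give $f(y^{i+1})\le f(y^i)$ and $[y^i,y^{i+1}]\subseteq\mathcal C(y)\subseteq\mathcal C_0$. The argument of Lemma~\ref{lem:invariance} uses only the starting value, so $f(x^0)$ may be replaced by $f(y)$ there. Assumption~\ref{ass:hessian} holds on the subset $\mathcal C(y)$, so the Taylor estimates \eqref{eq:taylor-upper}--\eqref{eq:taylor-gradient} apply with the same $H$. In Lemma~\ref{lem:gradient-decrease}, $\|\nabla^2 f\|\le L$ and the gradient bound $\|\nabla f(x)\|\le LD$ hold on the convex set $\mathcal C(y)$, which contains $x^\star$. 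That lemma therefore holds with $\mathcal L=L+\sqrt{2HLD}$. Every iterate satisfies $\|y^i-x^\star\|\le D$. Hence the proof of Lemma~\ref{lem:epoch-bound} goes through verbatim, as the remark following it asserts, and gives
\[
 f(y^+)-f_\star\le\frac{(t^2-t)\Delta+\tfrac32\bigl(L+\sqrt{2HLD}\bigr)D^2+\tfrac{1024}{3}HD^3}{9t^2}.
\]

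\textbf{Step 2: bound the radius by the gap.} Let $x\in\mathcal C(y)$. The segment $[x^\star,x]$ lies in the convex set $\mathcal C(y)$, on which $\nabla^2 f\succeq\mu I$. Since $\nabla f(x^\star)=0$, integrating along this segment gives
\[
 \Delta\ge f(x)-f_\star\ge\tfrac{\mu}{2}\|x-x^\star\|^2,
 \qquad\text{so}\qquad D^2\le 2\Delta/\mu.
\]
Substituting this bound gives two estimates:
\[
 \tfrac32\bigl(L+\sqrt{2HLD}\bigr)D^2\le 3\bigl(L+\sqrt{4HLD}\bigr)\Delta/\mu,
 \qquad
 \tfrac{1024}{3}HD^3=\tfrac{1024}{3}HD\cdot D^2\le\tfrac{2048}{3}HD\,\Delta/\mu.
\]
Both are dominated by the corresponding terms in \eqref{eq:local-epoch-factor}, since $\sqrt{2HLD}\le\sqrt{4HLD}$ and $2048/3\le 4096/3$. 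This completes the proof.

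\textbf{Main obstacle.} The only delicate point is Step~1: checking that every ingredient of Lemma~\ref{lem:epoch-bound} is local to $\mathcal C(y)$. This covers invariance, the gradient-decrease coefficient, the defect bound, and the use of $D$ in the convexity estimate $\Delta_i\le (h^i)^T(y^0-x^\star)+D\|e^i\|$. I would walk through each of these, noting in particular that the comparison point in Lemma~\ref{lem:gradient-decrease} need not lie in the set. I would also note that the retained incoming displacement does not affect the argument, exactly as in Lemma~\ref{lem:epoch-bound}.
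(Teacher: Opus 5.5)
Your argument is correct and follows the paper's proof: apply the completed-epoch bound of Lemma~\ref{lem:epoch-bound} on $\mathcal C(y)$ with $L,D$ in place of $L_0,D_0$, then substitute $D^2\le2\Delta/\mu$ and $D^3\le2D\Delta/\mu$.

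The one correction concerns your reading of the constants. The factors $\sqrt{4HLD}$ and $4096/3$ are not slack. They are exactly what adaptive steps give: by Theorem~\ref{thm:ag-rate}, the accepted-step bounds hold with $H$ replaced by $H_{\max}=2H$. Hence $\mathcal L=L+\sqrt{4HLD}$, and the defect term becomes $\tfrac{1024}{3}(2H)D^3\le\tfrac{4096}{3}HD\,\Delta/\mu$.

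The paper's proof covers both the fixed and adaptive coefficient rules, and the lemma is applied to adaptive epochs in Theorem~\ref{thm:automatic-acceleration}. Your proof handles only the fixed rule. For the adaptive case, drop the segment-invariance claim in your Step~1: Lemma~\ref{lem:invariance} does not supply it when an accepted coefficient lies below $H$. It is also not needed. Use instead:
\begin{itemize}
 \item monotonicity of the endpoints, which follows from \eqref{eq:ftest};
 \item the defect bound, which follows from \eqref{eq:gtest};
 \item the gradient comparison with $H_k\le 2H$.
\end{itemize}
These are all the epoch proof uses.
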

\begin{proof}
The epoch proof \eqref{eq:epoch-bound} uses only gradient inclusion,
the corrected-gradient orthogonalities relative to the epoch's
starting point, and the local decrease bounds. It does not require that
the incoming displacement be zero. Apply it on $\mathcal C(y)$,
using the one-step bounds for fixed coefficients or, for adaptive
steps, those proved in Appendix~\ref{app:backtracking}.
For every $x\in\mathcal C(y)$, strong convexity along the segment
from $x^\star$ to $x$ gives
\[
 \frac\mu2\|x-x^\star\|^2
 \le f(x)-f_\star\le f(y)-f_\star=\Delta.
\]
Taking the maximum over that set gives $D^2\le2\Delta/\mu$.
Substitute this and $D^3\le2D\Delta/\mu$ into that proof's
bound for $k=4t$, and use $\omega_{4t}\ge3t$.
\end{proof}

As the starting sublevel shrinks to a minimizer with positive
definite Hessian, the numerator contribution
$3\bigl(L+\sqrt{4HLD}\bigr)/\mu+(4096/3)HD/\mu$ is of order
$\kappa_\star$. Thus epochs with $t$ of order
$\sqrt{\kappa_\star}$ contract the gap by a fixed factor. This
observation does not prescribe an implementable length, since
the local curvature bounds are unknown.

\subsubsection{The restart controller}

The objective gap cannot be observed, because $f_\star$ is unknown.
The decrease over an epoch can be observed. If each epoch reduces
the gap by a factor of at most one third, consecutive decreases
shrink by at least a factor of two; the proof below verifies this
implication. We use that decrease comparison to decide whether an
epoch length should be increased.

We also start this search afresh at regular intervals. This prevents
an epoch length chosen far from the solution from remaining fixed
throughout the local phase. Algorithm~\ref{alg:controlled-restarts}
combines that search with the global guarantee already proved. An
``epoch with weight $t$'' means the inner epoch loop of
Algorithm~\ref{alg:deterministic-restarts}, initialized with that weight
and run for the stated number of accepted steps.

\begin{algorithm}[H]
\caption{Cubic SESOP with controlled restarts}
\label{alg:controlled-restarts}
\begin{algorithmic}[1]
\Require Initial point $x^0$; fixed or adaptive cubic-coefficient rule
\State Evaluate $f(x^0)$ and $\nabla f(x^0)$; set $k\gets0$
\For{$j=0,1,\ldots$} \Comment{$j$ is the phase number}
 \State $T\gets2^j$
 \Statex \textit{Global stage}
 \State From $x^k$, run Algorithm~\ref{alg:deterministic-restarts}
 for $T$ accepted steps, restarting its epoch number at zero;
 store its endpoint as $x^{k+T}$ and set $k\gets k+T$
 \Statex \textit{Tuned stage}
 \State $t\gets1$, $d_{\mathrm{prev}}\gets\varnothing$, $b\gets T$
 \While{$b>0$}
  \State $y^0\gets x^k$, $\ell\gets\min\{4t+1,b\}$
  \State From $y^0$, run an epoch with initial weight $t$ for
  $\ell$ accepted steps; store its endpoint as $x^{k+\ell}$
  \State $k\gets k+\ell$, $b\gets b-\ell$
  \If{$\ell=4t+1$} \Comment{the epoch was not truncated}
   \State $d\gets f(y^0)-f(x^k)$
   \If{$d_{\mathrm{prev}}\ne\varnothing$ and
          $d>d_{\mathrm{prev}}/2$}
    \State $t\gets2t$, $d_{\mathrm{prev}}\gets\varnothing$
   \Else
    \State $d_{\mathrm{prev}}\gets d$
   \EndIf
  \EndIf
 \EndWhile
\EndFor
\end{algorithmic}
\end{algorithm}

In this listing, $k$ counts all accepted steps, $j$ numbers phases,
$T$ is the budget of each stage, $b$ is its remaining budget, and
$\ell$ is the length of the next epoch. The scalar
$d_{\mathrm{prev}}$ stores the previous complete epoch decrease.
At either stage boundary, truncate the last epoch if its budget
would be exceeded. A truncated epoch is not used in the decrease
comparison. Every new epoch resets its starting point, aggregate, and
weights, including when $t$ is retained. Keep the last accepted
regularization coefficient across every boundary when backtracking is
used. Stop at a zero gradient.

Only endpoint function values are new inputs to the restart decision.
The restart decision uses neither $f_\star$, local Hessian bounds,
nor a strong-convexity parameter. In the fixed-coefficient version these endpoint evaluations
are an additional cost; they are already available in the adaptive
version.

\subsubsection{A global inverse-square rate and a local accelerated rate}

The next theorem gives both guarantees for this single trajectory.
Its global statement counts every step in both stages. Its local
statement says that the objective gap eventually decreases
exponentially, with an exponent proportional to
$1/\sqrt{\kappa_\star}$. The proof first protects the global rate,
then bounds the epoch lengths near the minimizer, and finally counts
the progress made by the stages that adapt those lengths.

\begin{theorem}[Automatic local acceleration with a global guarantee]
\label{thm:automatic-acceleration}
Under Assumptions~\ref{ass:convex}--\ref{ass:hessian}, use exact
fixed-coefficient steps or the adaptive steps of Section~\ref{sec:adaptive}.
Apply the preceding controller and index its single
trajectory by the total number $K$ of accepted cubic steps.
The trajectory is monotone, stays in $\mathcal C_0$, and satisfies
\begin{equation}\label{eq:controlled-global-rate}
 f(x^K)-f_\star\le\frac{5184D_0^2}{(K+1)^2}
 \left[\frac32\bigl(L_0+\sqrt{4HL_0D_0}\bigr)
                    +\frac{2048}{3}HD_0\right]
 \qquad(K\ge0).
\end{equation}
Under Assumption~\ref{ass:local}, with $\kappa_\star$ defined above,
$x^K\to x^\star$ and
\begin{equation}\label{eq:controlled-local-root}
 \limsup_{K\to\infty}(f(x^K)-f_\star)^{1/K}
 \le \exp\left(-\frac{\log 2}{116\sqrt{\kappa_\star}}\right).
\end{equation}
For a positive initial gap, reducing it by a factor $\varepsilon$
requires at most
\[
 \left(\frac{116\sqrt{\kappa_\star}}{\log2}+o(1)\right)
 \log(1/\varepsilon)
 \qquad\text{as }\varepsilon\downarrow0,
\]
where the $o(1)$ is for the fixed problem.
\end{theorem}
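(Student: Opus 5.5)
The plan is to prove the two assertions separately: the global bound from the global stages alone, and the local root rate from the tuned stages, treating the global stages there as a factor-two overhead.

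\textbf{Global rate.} The controller is a concatenation of phases. Phase $j$ consists of a global stage of $T=2^j$ steps followed by a tuned stage of $T$ steps. Every step is a cubic model step whose search space contains the current gradient. Lemma~\ref{lem:invariance} (or its adaptive analogue in Appendix~\ref{app:backtracking}) therefore gives monotonicity and invariance in $\mathcal C_0$ along the whole trajectory.

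The global stage of phase $j$ restarts Algorithm~\ref{alg:deterministic-restarts} from some point $x^k\in\mathcal C_0$. Its sublevel set lies inside $\mathcal C_0$. Hence Theorem~\ref{thm:restarted-rate}, applied with starting point $x^k$, bounds the gap at the end of that stage by $C/(T+1)^2$, where $C$ is the bracket of \eqref{eq:restarted-rate}. For the adaptive rule, the constants are inflated to those displayed in \eqref{eq:controlled-global-rate} ($4HL_0D_0$ and $2048/3$), as in the appendix.

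Now take $K$ in phase $j\ge1$. By monotonicity, the gap at $K$ is at most the gap at the end of the global stage of phase $j-1$, which is at most $C/2^{2(j-1)}$. Since $K+1\le 2(2^{j+1}-1)+1\le 2^{j+2}$, this yields
\[
\frac{C}{2^{2(j-1)}}\le \frac{64\,C}{(K+1)^2},
\qquad 64\cdot81=5184 .
\]
Phase $0$ is covered by the initial-gap bound already used in Theorem~\ref{thm:restarted-rate}.

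\textbf{Local rate: entry and tuned epochs.} Under Assumption~\ref{ass:local}, the argument of Lemma~\ref{lem:local-entry} combined with \eqref{eq:controlled-global-rate} gives $x^K\to x^\star$. Consequently, eventually the current sublevel set $\mathcal C(y)$ shrinks into a neighborhood where $\nabla^2 f\succeq\mu I$ with $\mu\to\mu_\star$, $L\to L_\star$ and $D\to0$.

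Lemma~\ref{lem:local-epoch} then says that an epoch with weight $t$ multiplies the gap by at most
\[
\frac{t^2-t+(3+o(1))\kappa_\star}{9t^2}.
\]
Fix $t_\star$ as the smallest power of two with factor $\le1/3$; then $t_\star\le c\sqrt{\kappa_\star}$. For $t\ge t_\star$ the implication noted before the algorithm applies. If $\Delta^+\le\Delta/3$ for every complete epoch, then each decrease satisfies $d\ge2\Delta/3$, while the next decrease is at most $\Delta^+\le d/2$. So the doubling test never fires. It follows that within a tuned stage $t$ never exceeds $2t_\star$, and at most $\log_2(2t_\star)+1$ doublings occur, each wasting at most two complete epochs.

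\textbf{Main obstacle: non-doubling epochs with $t<t_\star$.} The issue is that such epochs need not contract the gap by $1/3$. The plan is to show that halving decreases still force geometric gap decay. By Theorem~\ref{thm:local}, every step eventually contracts the gap by some fixed $\rho<1$. Hence each complete epoch has $d\ge(1-\rho)\Delta$, where $\Delta$ is the gap at the start of that epoch.

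Along a run of $m$ non-doubling complete epochs, $d_m\le2^{-m}d_1$, and therefore
\[
\Delta_m\le \frac{2^{-m}d_1}{1-\rho}.
\]
So, up to a stage-dependent additive loss of $\log(1/(1-\rho))$ in the exponent, the gap halves per complete epoch. Each epoch has length at most $4(2t_\star)+1$, and this holds both when $t<t_\star$ (halving decreases) and when $t\ge t_\star$ (factor $1/3$).

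The additive losses are a bounded number of wasted epochs per stage, namely doublings, truncations and the $\rho$-term. They are $O(\sqrt{\kappa_\star}\log\kappa_\star)$ steps per stage. Since stage lengths $T=2^j$ grow, these losses are $o(K)$.

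\textbf{Counting.} Counting halvings per accepted step, and including the factor two for the global stages (which are monotone, hence harmless), gives
\[
\log\frac{1}{f(x^K)-f_\star}\ \ge\ \frac{K\log2}{2(8t_\star+1)}-o(K).
\]
Bounding $t_\star$ explicitly in terms of $\sqrt{\kappa_\star}$ should yield the constant $116$ in \eqref{eq:controlled-local-root}. The $\varepsilon$-complexity then follows by inverting this inequality.
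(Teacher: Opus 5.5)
Your proposal is essentially the paper's proof. The ingredients match: the global bound comes from the preceding global stage plus monotonicity (hence $64\cdot81=5184$), and there is no doubling beyond $t_\star$ because a $1/3$-contraction forces $d_{j+1}\le d_j/2$. Inside each non-doubling group, the halving chain of decreases becomes gap decay through a per-epoch bound $d\ge c\,\Delta$, and the per-stage losses are amortized over the doubling budgets $T_j$.

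The one substitution is the source of $c$. You take $c=1-\rho$ from the eventual per-step contraction in Theorem~\ref{thm:local}. Its transfer to the controller's trajectory should be cited from the remarks after Theorem~\ref{thm:restarted-rate} and from Proposition~\ref{prop:adaptive-local}. The paper instead combines the first-step gradient-decrease bound with $f(x)-f_\star\le\|\nabla f(x)\|^2/(2\mu)$ on the sublevel. This gives the explicit $\theta=2\mu/(3\mathcal L)\ge1/(9\kappa_\star)$, which Corollary~\ref{cor:explicit-transition} later reuses.

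Two bookkeeping points need correcting.
\begin{enumerate}
\item The $\log(1/(1-\rho))$ loss is paid once per visited level, not once per stage. It is still bounded, since at most $\log_2t_\star+1$ levels occur.
\item Your factor two for the global stages is too optimistic. At the end of the global stage of phase $j$ (index $3T_j-2$), only about a third of the steps lie in completed tuned stages; the paper simply uses $K<4T_j$.
\end{enumerate}
The stated constant still survives. With your definition of $t_\star$ and $\Gamma=(3+o(1))\kappa_\star$, one has $t_\star\lesssim\sqrt{6\kappa_\star}$. Even with factor three and your loose epoch length $8t_\star+1$, this gives $3(8t_\star+1)\approx62\sqrt{\kappa_\star}<116\sqrt{\kappa_\star}$.
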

The root estimate describes the local R-rate for a fixed problem;
it does not by itself quantify the transient. The next corollary
bounds that finite cost separately. The controller is supplied
with neither $\kappa_\star$ nor a strong-convexity parameter.

Every accepted step uses at most three HVPs. The fixed version uses
one new gradient per step and at most $K+1$ function evaluations
through step $K$, with caching. For the adaptive version the trial
and oracle bounds of Proposition~\ref{prop:counts} are unchanged.
Finite termination is covered by constant extension without
additional oracle calls.

\begin{proof}
For this proof, define
\[
 C_{\mathrm{restart}}=81D_0^2
 \left[\frac32\bigl(L_0+\sqrt{4HL_0D_0}\bigr)
                    +\frac{2048}{3}HD_0\right].
\]
Every reset preserves gradient inclusion and the correction-space
containment. The accepted-step decrease and invariance arguments
therefore apply throughout. The global stage also retains its
inverse-square estimate when it starts with an incoming previous
displacement: as in Lemma~\ref{lem:local-epoch}, the epoch potential
does not use a zero displacement. The original level-set bounds
$L_0,D_0$ and Assumption~\ref{ass:hessian} apply on every starting sublevel.

\paragraph{Step 1: retain the global bound.}
For the proof, write $T_j=2^j$ for the stage budget in phase $j$.
That phase has $2T_j$ steps and starts after $2(T_j-1)$ accepted
steps. For $j\ge1$, the preceding global stage
performed $T_j/2$ steps. Its bound and monotonicity give, everywhere
in the current phase,
\[
 f(x^K)-f_\star\le\frac{C_{\mathrm{restart}}}{(T_j/2+1)^2}
 \le\frac{4C_{\mathrm{restart}}}{T_j^2}.
\]
Since $K+1\le4T_j$ in that phase, this proves
\eqref{eq:controlled-global-rate}. The initial phase follows from
$f(x^0)-f_\star\le L_0D_0^2/2\le C_{\mathrm{restart}}$ and monotonicity.

\paragraph{Step 2: bound the epoch lengths near the minimizer.}
Positive definiteness makes the minimizer unique. The global
bound and compactness imply $x^K\to x^\star$.
Write $\mu_\star=\lambda_{\min}(\nabla^2f(x^\star))$ and
$L_\star=\lambda_{\max}(\nabla^2f(x^\star))$.
Choose a sufficiently small objective threshold $r>0$ and its sublevel
$\mathcal C_r=\{x:f(x)\le f_\star+r\}\subset\mathcal C_0$.
It shrinks to $x^\star$ as $r\downarrow0$.
On this sublevel we may take $\mu=\mu_\star/2$, bound the Hessian
norm by $2L_\star$, and use
\[
 D_r=\max_{x\in\mathcal C_r}\|x-x^\star\|,\qquad
 \mathcal L=2L_\star+\sqrt{8HL_\star D_r}\le3L_\star.
\]
Here $D_r$ is the radius of this smaller sublevel and
$\mathcal L$ is its effective gradient-decrease bound.
Further reduce $r$ so that
$(4096/3)HD_r/\mu\le\kappa_\star$. Define
\[
 \Gamma=\frac{3\mathcal L}{\mu}
                  +\frac{4096HD_r}{3\mu}\le19\kappa_\star,
 \qquad \theta=\frac{2\mu}{3\mathcal L}\ge\frac{1}{9\kappa_\star}.
\]
Here $\Gamma$ bounds the curvature contribution to one epoch,
and $0<\theta<1$ is a lower bound on the fraction of the current
gap removed by its first step. Let $t_\star$ be the smallest power of two not less
than $\max\{1,\sqrt{\Gamma/2}\}$, and set
\[
 R=\log_2t_\star,\qquad \ell_{\max}=4t_\star+1.
\]
$R$ counts possible doublings of the epoch weight, and
$\ell_{\max}$ bounds the length of any tuned epoch.
Since $\kappa_\star\ge1$, these choices give
$t_\star<7\sqrt{\kappa_\star}$ and
$\ell_{\max}\le29\sqrt{\kappa_\star}$.

Lemma~\ref{lem:local-epoch} bounds the contraction of every complete
epoch inside $\mathcal C_r$ by
$(t^2-t+\Gamma)/(9t^2)$. At $t=t_\star$ this is at most $1/3$.
For this paragraph, number successive complete epochs by $j$,
write $y^j$ for their starting points, and put
$\Delta_j=f(y^j)-f_\star$ and $d_j=f(y^j)-f(y^{j+1})$.
Their decreases satisfy
\[
 d_j\ge\tfrac23\Delta_j,\qquad
 d_{j+1}\le\Delta_{j+1}\le\tfrac13\Delta_j,
 \qquad d_{j+1}\le d_j/2.
\]
The controller can therefore never double beyond $t_\star$ in a
tuned stage that starts inside $\mathcal C_r$.
Every epoch of that stage has length at most $\ell_{\max}$.

Strong convexity is needed only along the segment joining $x$ to
$x^\star$, which lies in the same sublevel. It gives
\[
 f(x)-f_\star
 \le\nabla f(x)^T(x-x^\star)-\frac\mu2\|x-x^\star\|^2
 \le\frac{\|\nabla f(x)\|^2}{2\mu}.
\]
Thus the first-step gradient decrease bound also controls the
whole epoch decrease:
\[
 d_j\ge \theta\Delta_j,
\]
using the gradient decrease bound with $3\mathcal L$.

\paragraph{Step 3: count progress even when comparisons fail.}
Fix a consecutive group of $J\ge1$ completed epochs at the same
value of $t$. Number these epochs by $j=0,\ldots,J-1$ and denote their
decreases by $d_j$.
Write $\Delta_{\mathrm{start}}$ and $\Delta_{\mathrm{end}}$ for the objective
gaps at the beginning and end of this group.
All comparisons except possibly the last pass, since a failure
ends the group. For $J\ge2$,
\[
 d_{J-2}\le2^{-(J-2)}d_0
 \le2^{-(J-2)}\Delta_{\mathrm{start}}.
\]
The first-step bound for epoch $J-2$ and monotonicity give
\[
 \Delta_{\mathrm{end}}
 \le\frac{d_{J-2}}{\theta}
 \le\frac4\theta\,2^{-J}\Delta_{\mathrm{start}}.
\]
For $J=1$ the same bound follows from monotonicity and $\theta<1$.
To count the work, retain the actual length of each group rather
than assigning length $\ell_{\max}$ to every epoch.
For a visited level $t=2^\ell$, let $J_\ell$ be the number of its
completed epochs, and let its logarithmic gap reduction be
\[
 G_\ell=\log_2
 \frac{\text{gap before these epochs}}{\text{gap after these epochs}}.
\]
Assume these gaps are positive; finite termination is immediate.
For a level with no completed epoch, set $J_\ell=G_\ell=0$.
Monotonicity and the preceding group bound give
$G_\ell\ge\max\{0,J_\ell-\log_2(4/\theta)\}$, hence
$J_\ell\le\log_2(4/\theta)+G_\ell$. Each such epoch has length
$4\cdot2^\ell+1$. The level only increases, and $0\le\ell\le R$.
The sum of these distinct lengths is bounded by
\[
 \sum_{\ell=0}^R(4\cdot2^\ell+1)
 =8t_\star+R-3\le3\ell_{\max},
\]
where $R\le t_\star-1$ and $\ell_{\max}=4t_\star+1$.
Now let $\Delta_{\mathrm{start}},\Delta_{\mathrm{end}}$ denote the gaps at the
endpoints of the whole tuned stage, of budget $T$, and put
$G=\log_2(\Delta_{\mathrm{start}}/\Delta_{\mathrm{end}})$.
The completed groups are consecutive,
and the final partial epoch is monotone, so
$\sum_{\ell=0}^R G_\ell\le G$. That partial epoch has fewer than $\ell_{\max}$
steps. Summing the actual work of the completed groups yields
\[
 \begin{aligned}
 T-\ell_{\max}&\le\sum_{\ell=0}^R(4\cdot2^\ell+1)J_\ell\\
 &\le3\ell_{\max}\log_2(4/\theta)
                  +\ell_{\max}\sum_{\ell=0}^R G_\ell\\
 &\le3\ell_{\max}\log_2(4/\theta)+\ell_{\max}G.
 \end{aligned}
\]
The same inequality holds if no epoch completes, since then $T<\ell_{\max}$.
It follows that $G\ge T/\ell_{\max}-1-3\log_2(4/\theta)$, or equivalently
\begin{equation}\label{eq:tuned-stage}
 \Delta_{\mathrm{end}}\le F\exp(-\beta T)\Delta_{\mathrm{start}},
 \qquad
 F=2(4/\theta)^3,\qquad \beta=\frac{\log2}{\ell_{\max}}.
\end{equation}
The scalar $F$ accounts for adapting the epoch length, while
$\beta$ is the progress exponent within a tuned stage. Both are
analysis constants; the controller uses neither.

\paragraph{Step 4: combine the progress of successive phases.}
Choose a phase $j_0$ whose starting point belongs to $\mathcal C_r$,
and let $k_0=2(T_{j_0}-1)$ be its starting iteration index.
All later iterates stay in that sublevel. Before any iterate in
phase $j>j_0$, the completed tuned stages have total budget
$\sum_{\ell=j_0}^{j-1}T_\ell=T_j-T_{j_0}$.
The intervening global stages are monotone. Repeatedly applying
\eqref{eq:tuned-stage} therefore gives
\[
 f(x^K)-f_\star
 \le\bigl(f(x^{k_0})-f_\star\bigr)
       F^{j-j_0}\exp[-\beta(T_j-T_{j_0})].
\]
Since $K<4T_j$ and $j/K\to0$, taking $K$th roots gives a bound
$\exp(-\beta/4)$, which proves \eqref{eq:controlled-local-root}
using $\ell_{\max}\le29\sqrt{\kappa_\star}$. For every fixed positive
$c<(\log2)/(116\sqrt{\kappa_\star})$, the root bound implies
$f(x^K)-f_\star\le \exp(-cK)$ for all sufficiently large $K$.
For a positive initial gap, the number of steps needed to reduce
it by a factor $\varepsilon$ consequently has limiting upper
ratio to $\log(1/\varepsilon)$ at most $1/c$. Letting $c$
increase to $(\log2)/(116\sqrt{\kappa_\star})$ proves the
stated asymptotic iteration bound. A zero initial gap is covered
by immediate termination.

Resets need neither new gradients nor HVPs. Fixed-coefficient
function evaluations are made only at selected accepted iterates,
so caching bounds their number by $K+1$. In the adaptive case the
same values are already available. The regularization coefficient
is retained across boundaries, so its amortized trial proof is
unchanged.
\end{proof}

The local assertion is an R-rate for objective gaps, not a claim
of the same contraction at every individual step. The corresponding
root bounds for iterate errors and gradient norms have half the
exponent, by the local quadratic and gradient estimates.
The entry cost depends on the initial point, Hessian variation,
and curvature at the minimizer. The following bound makes these
dependencies explicit. In particular, \eqref{eq:controlled-local-root} belongs
to this restart controller and is not a theorem about the
uninterrupted baseline or the increasing-epoch rule
alone. 

\subsubsection{Explicit entry and adaptation costs}

The Hessian Lipschitz assumption also quantifies entry into the
local regime. The constants below are conservative and used only
in the analysis; the controller is unchanged.
The bound separates three costs: reaching a sufficiently small
sublevel, adapting the epoch length, and subsequently reducing
the objective gap to the requested accuracy.

\begin{corollary}[A finite global-to-local complexity bound]
\label{cor:explicit-transition}
Under the assumptions of Theorem~\ref{thm:automatic-acceleration},
suppose $\nabla^2 f(x^\star)\succ0$. Write
$\mu_\star=\lambda_{\min}(\nabla^2 f(x^\star))$ and retain
$\kappa_\star$ from that theorem and $H$ from
Assumption~\ref{ass:hessian}. Define the
local objective threshold and a sufficient entry index by
\begin{align*}
 \varepsilon_{\mathrm{loc}}&=\frac{\mu_\star^3}{2^{27}H^2},\\
 K_{\mathrm{entry}}&=\left\lceil72D_0
 \sqrt{\frac{\tfrac32(L_0+\sqrt{4HL_0D_0})
                   +\tfrac{2048}{3}HD_0}
                  {\varepsilon_{\mathrm{loc}}}}\right\rceil.
\end{align*}
Let $T_{\mathrm{adapt}}$ be the smallest power of two satisfying both
\begin{align*}
 T_{\mathrm{adapt}}&\ge K_{\mathrm{entry}}+2,\\
 T_{\mathrm{adapt}}&\ge\frac{58\sqrt{\kappa_\star}}{\log2}
 \left[\log2+3\log(36\kappa_\star)\right].
\end{align*}
This is the budget of a phase of the existing controller, not a
parameter supplied to it. For every $K\ge2(T_{\mathrm{adapt}}-1)$,
\begin{equation}\label{eq:controlled-finite-tail}
 f(x^K)-f_\star\le \varepsilon_{\mathrm{loc}}
 \exp\left[-\frac{\log2}{232\sqrt{\kappa_\star}}
                    (K-4T_{\mathrm{adapt}})\right].
\end{equation}
Consequently, for every $0<\varepsilon<\varepsilon_{\mathrm{loc}}$, objective
accuracy $f(x^K)-f_\star\le\varepsilon$ is guaranteed once
\begin{equation}\label{eq:controlled-finite-complexity}
 K\ge\left\lceil4T_{\mathrm{adapt}}
       +\frac{232\sqrt{\kappa_\star}}{\log2}
                     \log\frac{\varepsilon_{\mathrm{loc}}}{\varepsilon}\right\rceil.
\end{equation}
In particular, a sufficient number of accepted steps is
\begin{equation}\label{eq:controlled-separated-costs}
\begin{aligned}
 O\!\Biggl(&1+D_0\sqrt{
 \frac{\tfrac32(L_0+\sqrt{4HL_0D_0})
                         +\tfrac{2048}{3}HD_0}
      {\varepsilon_{\mathrm{loc}}}}\\
 &+\sqrt{\kappa_\star}\left[
       \log(2+\kappa_\star)+\log\frac{\varepsilon_{\mathrm{loc}}}{\varepsilon}
                       \right]\Biggr),
\end{aligned}
\end{equation}
with a universal implicit constant. None of these analysis
parameters changes the oracle counts of
Theorem~\ref{thm:automatic-acceleration}.
\end{corollary}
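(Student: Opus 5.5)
The argument makes the four steps of the proof of Theorem~\ref{thm:automatic-acceleration} quantitative. There $r$ was chosen small but otherwise unspecified, and the phase $j_0$ was simply ``eventual''. We now fix $r=\varepsilon_{\mathrm{loc}}$ and verify the two smallness requirements on $\mathcal C_r$ from Step~2, namely $\nabla^2f\succeq\tfrac12\mu_\star I$ with $\|\nabla^2f\|\le2L_\star$ on $\mathcal C_r$, and $(4096/3)HD_r/\mu\le\kappa_\star$ with $\mathcal L\le3L_\star$. We then bound the entry phase through the global rate \eqref{eq:controlled-global-rate}, and finally unroll \eqref{eq:tuned-stage} with explicit $F$ and $\beta$.

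\textbf{Step 1: the local sublevel.} Let $\rho=\mu_\star/(4H)$. On $B(x^\star,\rho)$, Assumption~\ref{ass:hessian} gives $\|\nabla^2f(x)-\nabla^2f(x^\star)\|\le2H\rho=\mu_\star/2$. Hence $\nabla^2f\succeq\tfrac12\mu_\star I$ and $\|\nabla^2f\|\le L_\star+\mu_\star/2\le2L_\star$ there. Along a ray from $x^\star$, strong convexity on this ball gives $f(x)-f_\star\ge\tfrac14\mu_\star\|x-x^\star\|^2$ inside it. By convexity of $f$ along rays, every $x$ with $\|x-x^\star\|\ge\rho$ then satisfies $f(x)-f_\star\ge\mu_\star\rho^2/16$.

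Since $\varepsilon_{\mathrm{loc}}=\mu_\star^3/(2^{27}H^2)$ is far below $\mu_\star\rho^2/16=\mu_\star^3/(256H^2)$, the sublevel $\mathcal C_r$ lies in the ball. On it we get $D_r^2\le4\varepsilon_{\mathrm{loc}}/\mu_\star$, that is, $D_r\le\mu_\star/(2^{12.5}H)$. Two checks remain. First, $\sqrt{8HL_\star D_r}\le L_\star$, which gives $\mathcal L\le3L_\star$. Second, $(4096/3)HD_r/(\mu_\star/2)\le(8192/3)2^{-12.5}<1\le\kappa_\star$. The power $2^{27}$ is chosen precisely so that these margins hold, and I expect them to go through with room to spare. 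We then have $\Gamma\le19\kappa_\star$ and $\theta\ge1/(9\kappa_\star)$, so $\log_2(4/\theta)\le\log_2(36\kappa_\star)$ and $F=2(4/\theta)^3\le2(36\kappa_\star)^3$.

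\textbf{Step 2: entry.} From \eqref{eq:controlled-global-rate}, written as $5184=72^2$, we get $f(x^K)-f_\star\le\varepsilon_{\mathrm{loc}}$ whenever $K+1\ge K_{\mathrm{entry}}$. Take $j_0$ with $T_{j_0}=T_{\mathrm{adapt}}$. The phase starts at index $k_0=2(T_{\mathrm{adapt}}-1)\ge K_{\mathrm{entry}}$, so $x^{k_0}\in\mathcal C_r$, and by monotonicity every later iterate stays there.

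\textbf{Step 3: unrolling the tuned stages.} Apply \eqref{eq:tuned-stage} to every tuned stage of phases $j\ge j_0$, with $\beta=\log2/\ell_{\max}\ge\log2/(29\sqrt{\kappa_\star})$. The global stages only add monotone factors. Each tuned stage of budget $T_\ell\ge T_{\mathrm{adapt}}$ satisfies $\log F\le\log2+3\log(36\kappa_\star)\le\tfrac12\beta T_{\mathrm{adapt}}$, which is exactly the second condition on $T_{\mathrm{adapt}}$. Hence each stage contracts by at least $\exp(-\beta T_\ell/2)$. The completed tuned budget before an index $K$ in phase $j$ is $T_j-T_{j_0}$. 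Using $K<4T_j$ gives $T_j-T_{\mathrm{adapt}}\ge(K-4T_{\mathrm{adapt}})/4$, so the exponent is at least $\beta(K-4T_{\mathrm{adapt}})/8\ge\log2\,(K-4T_{\mathrm{adapt}})/(232\sqrt{\kappa_\star})$. Together with the starting gap $\le\varepsilon_{\mathrm{loc}}$ this gives \eqref{eq:controlled-finite-tail}. For $K<4T_{\mathrm{adapt}}$ the exponential factor exceeds one and monotonicity suffices.

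\textbf{Step 4: conclusions and the main obstacle.} Inverting the tail bound gives \eqref{eq:controlled-finite-complexity}. Bounding $T_{\mathrm{adapt}}\le2\max\{K_{\mathrm{entry}}+2,\;58\sqrt{\kappa_\star}(\log2+3\log(36\kappa_\star))/\log2\}$ gives \eqref{eq:controlled-separated-costs}. The main obstacle is the bookkeeping in Step~3. It has to hold for indices $K$ inside a phase, where the current tuned stage may be incomplete, and the per-stage factors $F$ must be absorbed uniformly. If the halving argument $\log F\le\beta T_\ell/2$ turns out too tight, the fallback is to sum the stage exponents and bound $(j-j_0)\log F\le\beta T_{\mathrm{adapt}}$ using $\sum_\ell T_\ell\ge2^{j-j_0}T_{\mathrm{adapt}}$. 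This may require adjusting the constant $232$ only within the factor-two slack already present.
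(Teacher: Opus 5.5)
Your proposal is correct and follows essentially the same route as the paper. The shared steps are entry via the global rate with $5184=72^2$, localization of $\mathcal C_0\cap\{f-f_\star\le\varepsilon_{\mathrm{loc}}\}$, the explicit stage constants $F_{\mathrm{loc}}=2(36\kappa_\star)^3$ and $\beta_{\mathrm{loc}}=\log2/(29\sqrt{\kappa_\star})$, and exactly your halving argument $\log F_{\mathrm{loc}}\le\beta_{\mathrm{loc}}T_{\mathrm{adapt}}/2$ combined with $K<4T_j$, so your fallback is unnecessary. The only difference is cosmetic: the paper takes $\rho=\mu_\star/(4096H)$ so that $\varepsilon_{\mathrm{loc}}=\mu_\star\rho^2/8$ and $D\le\rho$ directly, whereas you use the larger ball of radius $\mu_\star/(4H)$ and then bound $D_r$ by quadratic growth; this works equally well as long as the Hessian bounds are applied only on segments inside $\mathcal C_0$, as you implicitly do.
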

\begin{proof}
The entry index is $\lceil8\sqrt{C_{\mathrm{restart}}/
\varepsilon_{\mathrm{loc}}}\rceil$, using the numerator abbreviation
from the proof of Theorem~\ref{thm:automatic-acceleration}.
Set $\rho=\mu_\star/(4096H)$ and
$L_\star=\lambda_{\max}(\nabla^2 f(x^\star))$.
For $x\in\mathcal C_0$ with $\|x-x^\star\|\le\rho$, the Hessian
Lipschitz bound gives
\[
 \|\nabla^2 f(x)-\nabla^2 f(x^\star)\|
 \le2H\rho\le\frac{\mu_\star}{1024}.
\]
In particular, the Hessian there is bounded below by
$(\mu_\star/2)I$ and above by $2L_\star I$.
If a point $x\in\mathcal C_0$ lies at distance at least $\rho$
from $x^\star$, its segment to $x^\star$ lies in the convex set
$\mathcal C_0$. At the first point $z$ on that segment at distance
$\rho$ from $x^\star$, integration of the Hessian lower bound gives
$f(z)-f_\star\ge \mu_\star\rho^2/4$.
Convexity along the ray implies $f(x)\ge f(z)$. Since
$\varepsilon_{\mathrm{loc}}=\mu_\star\rho^2/8$, we have proved
\[
 \mathcal C_0\cap\{x:f(x)-f_\star\le \varepsilon_{\mathrm{loc}}\}
 \subset B(x^\star,\rho).
\]
Only segments inside $\mathcal C_0$ were used; the full ball need
not lie in that set.

On this local sublevel, take $\mu=\mu_\star/2$ and let $D$ be the
maximum distance to $x^\star$, so that $D\le\rho$.
The constants in the proof of
Theorem~\ref{thm:automatic-acceleration} then satisfy
\[
 2L_\star+\sqrt{8HL_\star D}\le3L_\star,\qquad
 \frac{4096HD}{3\mu}\le\frac23\le\kappa_\star.
\]
Substitution into Lemma~\ref{lem:local-epoch} and the first-step
decrease bound reproduces the local estimates used in that proof.
Its tuned-stage argument \eqref{eq:tuned-stage} therefore gives,
for a stage of $T$ accepted steps starting in this set,
\[
 \Delta_{\mathrm{end}}\le F_{\mathrm{loc}}\exp(-\beta_{\mathrm{loc}} T)
                         \Delta_{\mathrm{start}},\qquad
 F_{\mathrm{loc}}=2(36\kappa_\star)^3,
 \quad\beta_{\mathrm{loc}}=\frac{\log2}{29\sqrt{\kappa_\star}},
\]
Here $\Delta_{\mathrm{start}}$ and $\Delta_{\mathrm{end}}$ are the objective
gaps at the stage endpoints; $F_{\mathrm{loc}}$ and
$\beta_{\mathrm{loc}}$ are explicit versions of the stage constants
in \eqref{eq:tuned-stage}. The global bound gives
$f(x^K)-f_\star\le \varepsilon_{\mathrm{loc}}$ for every $K\ge K_{\mathrm{entry}}$.
The phase of budget $T_{\mathrm{adapt}}$ starts at
$2(T_{\mathrm{adapt}}-1)\ge K_{\mathrm{entry}}$ and hence lies in this sublevel.
For every subsequent tuned stage, its budget $T$ satisfies
$T\ge T_{\mathrm{adapt}}\ge2\log F_{\mathrm{loc}}/\beta_{\mathrm{loc}}$.
Its contraction is therefore at most
$\exp(-\beta_{\mathrm{loc}} T/2)$.

In a phase with budget $T_j\ge T_{\mathrm{adapt}}$, the completed tuned
stages since the phase of budget $T_{\mathrm{adapt}}$ have total budget
$T_j-T_{\mathrm{adapt}}$. The intervening steps are monotone, so every
iterate in this phase satisfies
\[
 f(x^K)-f_\star\le \varepsilon_{\mathrm{loc}}
       \exp[-\beta_{\mathrm{loc}}(T_j-T_{\mathrm{adapt}})/2].
\]
This also holds in the first such phase, by monotonicity.
Since $K<4T_j$, it implies \eqref{eq:controlled-finite-tail}.
Solving that inequality for $K$ proves
\eqref{eq:controlled-finite-complexity}. Finally, the next power
of two is less than twice its positive threshold,
$\log F_{\mathrm{loc}}=O(\log(2+\kappa_\star))$, and
$\beta_{\mathrm{loc}}^{-1}=O(\sqrt{\kappa_\star})$.
These facts prove \eqref{eq:controlled-separated-costs}.
\end{proof}

The first nonconstant term in \eqref{eq:controlled-separated-costs}
is an entry cost; it depends on the curvature scale $\mu_\star$,
the Hessian Lipschitz constant $2H$, and the initial sublevel scales. The next
term pays for restart adaptation, and the last term describes
subsequent accelerated decrease. For accuracies
$\varepsilon\ge \varepsilon_{\mathrm{loc}}$ the global estimate alone suffices.
The bound is an accuracy guarantee, not a stopping test involving
the unknown optimal value.

\subsection{An optional previous-step direction}

\subsubsection{Why three directions suffice}

The proofs use the current gradient and the two directions that carry
the corrected aggregate. They do not require the previous displacement,
which is why Algorithms~\ref{alg:cubic}--\ref{alg:controlled-restarts}
use only three HVPs. Some implementations nevertheless benefit from
retaining the preceding step. The next statement permits this practical
addition without making it part of the theoretical construction.

\begin{corollary}[Optional fourth memory direction]
\label{cor:three-hvp-restarts}
In any of Algorithms~\ref{alg:cubic}--\ref{alg:controlled-restarts},
let $s^{k-1}$ be the preceding accepted step and enlarge the stated
search space to
\[
 \widehat{\mathcal S}^k
 =\Span\{\nabla f(y^k),s^{k-1},y^k-y^0,z^k\}.
\]
Keep the same cubic solve, corrected aggregate, weights, coefficient
rule, and restart decisions.
For exact model solves, the baseline guarantee of
Theorem~\ref{thm:main}, the logarithm-free deterministic-restart
guarantee of Theorem~\ref{thm:restarted-rate}, the global estimate
\eqref{eq:controlled-global-rate}, the local accelerated bound
\eqref{eq:controlled-local-root}, and the finite transition bound of
Corollary~\ref{cor:explicit-transition} all hold with the same constants.
These conclusions apply to both the fixed-coefficient and adaptive
versions whenever the corresponding original theorem does.
Each accepted step then uses at most four HVPs; the function, gradient,
and adaptive-trial bounds are unchanged.
The ordinary local linear conclusion of Theorem~\ref{thm:local}
and the local PL conclusion of Theorem~\ref{thm:local-pl} also apply
under their respective assumptions.
\end{corollary}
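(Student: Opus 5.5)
The plan is to audit which properties of the search space the earlier proofs actually use, and then check that each survives the enlargement $\mathcal S^k\subseteq\widehat{\mathcal S}^k$. Every argument above uses only:
\begin{itemize}
\item gradient inclusion $\nabla f(y^k)\in\widehat{\mathcal S}^k$;
\item the containment $\mathcal V^{k+1}=\Span\{y^{k+1}-y^0,z^k\}\subseteq\widehat{\mathcal S}^k$;
\item the two one-step bounds \eqref{eq:global-two-budgets}, with the coefficients $\mathcal L$ and $(18H)^{1/3}$.
\end{itemize}
None of them uses the dimension of the space or an angle condition between its directions.

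\textbf{Step 1: the one-step lemmas.} Apply Lemmas~\ref{lem:invariance}, \ref{lem:defect} and~\ref{lem:gradient-decrease} with $S=\widehat{\mathcal S}^k$. They are stated for an arbitrary subspace containing the current gradient. They give invariance in $\mathcal C_0$, the decrease \eqref{eq:two-decreases}, the defect bound $\|\Pi_S\nabla f(y^{k+1})\|\le(18H)^{1/3}d_k^{2/3}$, and the gradient budget, all with unchanged constants.

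\textbf{Step 2: the correction and orthogonalities.} Since $s^k\in\widehat{\mathcal S}^k$, we have $y^{k+1}-y^0=(y^k-y^0)+s^k\in\widehat{\mathcal S}^k$, and $z^k\in\widehat{\mathcal S}^k$. Hence $\mathcal V^{k+1}\subseteq\widehat{\mathcal S}^k$, and the argument of Proposition~\ref{prop:iteration} gives $\|e^{k+1}\|\le\|\Pi_{\widehat{\mathcal S}^k}\nabla f(y^{k+1})\|$. The aggregate update is unchanged, so the orthogonalities \eqref{eq:orthogonalities} hold exactly as before. For the adaptive versions, the backtracking certificates \eqref{eq:ftest}--\eqref{eq:gtest} are pointwise in the step. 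The appendix bounds in Appendix~\ref{app:backtracking} depend only on gradient inclusion and these certificates, so they transfer in the same way.

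\textbf{Step 3: rerun the multi-step arguments.} Lemmas~\ref{lem:reserve}--\ref{lem:window}, Lemmas~\ref{lem:epoch-bound} and~\ref{lem:local-epoch}, and Theorems~\ref{thm:main}, \ref{thm:restarted-rate}, \ref{thm:automatic-acceleration} consume only the per-step inequalities, the orthogonalities, monotonicity, and the weight identities. The weights and restart decisions are unchanged by hypothesis, since the controller reads only endpoint function values. Each of these results and Corollary~\ref{cor:explicit-transition} therefore holds verbatim with the same constants.

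\textbf{Step 4: the local results.} Lemma~\ref{lem:local-projection} was proved explicitly for any search space containing the gradient, independent of its dimension and of the angles between its directions. Theorem~\ref{thm:local} uses only the gradient direction inside the space, and Theorem~\ref{thm:local-pl} uses only the two decrease bounds \eqref{eq:pl-two-budgets}. All three therefore apply.

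\textbf{Oracle count.} The space has dimension at most four, so one HVP per basis column gives at most four HVPs. The new gradient and the function evaluations are unchanged.

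The main obstacle is bookkeeping rather than new estimates. One must verify that no earlier proof silently used $r_k\le3$, or used $\mathcal S^k$ itself where the enlarged space now appears, for instance in a containment argument. The places to check are the $\mathcal V^{k+1}$ containment, the epoch resets (where $s^{k-1}$ comes from before the epoch, which the lemmas already permit as an incoming displacement), and the adaptive trial counts.
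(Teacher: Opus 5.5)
Your proposal is correct and follows essentially the same route as the paper's proof. The enlarged space $\widehat{\mathcal S}^k$ still contains the current gradient and, via $y^{k+1}-y^0=(y^k-y^0)+s^k$, the correction space $\Span\{y^{k+1}-y^0,z^k\}$. These are the only direction inclusions used by the one-step decrease bounds, the epoch potential, the controller and transition analysis, and the local results, and a basis with at most four columns gives at most four HVPs, cached and reused across backtracking trials.

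You carry out this audit lemma by lemma in more detail than the paper does. One wording point to tighten: the restart \emph{rule} is unchanged, but the realized doubling decisions can differ because the trajectory changes. This is harmless, since the controller proof covers any outcome of those decisions.
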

\begin{proof}
The original three-direction space is contained in
$\widehat{\mathcal S}^k$. The current gradient belongs to this enlarged
space, and $y^{k+1}-y^0=(y^k-y^0)+s^k\in\widehat{\mathcal S}^k$.
Therefore
$\Span\{y^{k+1}-y^0,z^k\}\subseteq\widehat{\mathcal S}^k$.
These are the only direction inclusions used by the cubic decrease bounds,
the epoch potential, and the proof of
Theorem~\ref{thm:automatic-acceleration}. Its estimates and their
finite-time consequence are consequently unchanged.
Theorem~\ref{thm:local} requires only gradient inclusion, while
Theorem~\ref{thm:local-pl} requires the same decrease bounds.
Finally, a basis of $\widehat{\mathcal S}^k$ has at most four columns,
so forming the reduced Hessian takes at most four HVPs, reused
throughout backtracking.
\end{proof}

This statement does not predict better iteration counts or running
time. It only permits the memory direction while keeping the guarantees.
The next subsection permits finite model solves in both variants.

\subsection{A finite stopping rule for the reduced cubic solve}
\label{sec:inexact-model}

The reduced problem need not be solved to an exact argmin. A relative
stationarity test uses only its already computed coefficients and
preserves the basic, adaptive, and deterministic-epoch guarantees
specified below. Corollary~\ref{cor:inexact-acceleration} then extends
the accelerated restart controller and its finite-time bounds to
the same criterion. The required absolute
accuracy decreases with the step; a fixed absolute solver tolerance
does not provide this guarantee.

\begin{proposition}[Inexact reduced cubic solves]\label{prop:inexact-model}
Retain the assumptions of Theorem~\ref{thm:main} for the fixed
version and Theorem~\ref{thm:ag-rate} for the adaptive version.
For the nondegenerate local claims, retain the assumptions of
Theorem~\ref{thm:local} and bounded model coefficients.
At a nonstationary iterate, replace the exact model minimizer by any
$s^k\in\mathcal S^k$ satisfying
\begin{equation}\label{eq:inexact-model}
 \left\|\Pi_{\mathcal S^k}\left(
 \nabla f(x^k)+\nabla^2 f(x^k)s^k
       +H_k\|s^k\|s^k\right)\right\|
 \le\frac{H_k}{6}\|s^k\|^2,\qquad H_k>0.
\end{equation}
Here $H_k=H$ for the fixed algorithm, while in backtracking $H_k$
denotes the coefficient of the current trial. The projection
$\Pi_{\mathcal S^k}$ is Euclidean.
Keep the same search spaces, aggregate update, and, in the adaptive
case, the same two Taylor acceptance tests
\eqref{eq:ftest}--\eqref{eq:gtest}.
Then monotonicity, bounded backtracking parameters, and the global
$O(\sqrt{1+\log(K+1)}/(K+1)^2)$ rate continue to hold, with changed
universal constants. With the deterministic epochs of
Section~\ref{sec:restarted-rate}, the global rate is $O(K^{-2})$,
again with changed constants.
Under Assumption~\ref{ass:local}, the local projection decomposition
and the local linear guarantee of Theorem~\ref{thm:local} also remain
valid, including across restarts.
The local PL conclusions of Theorem~\ref{thm:local-pl} hold under
its accumulation-point assumption. For adaptive coefficients,
additionally suppose $H_k\ge H_{\min}>0$ for all sufficiently
large $k$, where $H_{\min}$ is not supplied to the algorithm.
The contraction factor is
\[
 q=1-\frac{2\mu}{7(L_0+\sqrt{(12/5)HL_0D_0})}.
\]
Decrease $\mu$ if needed to ensure $0<q<1$. In the finite-length
and iterate-error bounds, replace $(2H/3)^{1/3}$ by $(H/2)^{1/3}$
for fixed coefficients, or by $(H_{\min}/2)^{1/3}$ for adaptive
coefficients under the stated lower-bound condition.

For each trial, any algebraic solver whose iterates converge to the
reduced-model minimizer eventually meets \eqref{eq:inexact-model}.
Checking and refining this
criterion uses the cached reduced Hessian and gradient, and requires
no additional function, gradient, or Hessian-vector oracle calls.
For a specified geometric bisection solver,
Proposition~\ref{prop:cubic-inner-work} gives an explicit bound on the
number of reduced linear solves. Projections and the reduced coefficients
remain exact.
\end{proposition}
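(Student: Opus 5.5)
The approach is to redo the one-step guarantees of Section~\ref{sec:one-step} with the exact stationarity \eqref{eq:model-stationarity} replaced by the residual bound \eqref{eq:inexact-model}, and then rerun every later proof unchanged. Those proofs use only the decrease inequalities and the orthogonality relations, with the constants left generic.

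Write $\rho^k=\Pi_{\mathcal S^k}(\nabla f(x^k)+\nabla^2f(x^k)s^k+H_k\|s^k\|s^k)$, so that $\|\rho^k\|\le H_k\|s^k\|^2/6$. Taking the inner product with $s^k\in\mathcal S^k$ gives
\[
 \nabla f(x^k)^Ts^k\le-(s^k)^T\nabla^2f(x^k)s^k-\tfrac56H_k\|s^k\|^3 .
\]
The same computation as in Lemma~\ref{lem:invariance} shows that the cubic expression at $ts^k$, $0<t\le1$, is at most $-t(1-t/2)(s^k)^T\nabla^2f(x^k)s^k-H_kt(5/6-t^2/3)\|s^k\|^3<0$. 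The exit-time argument then again gives $[x^k,x^k+s^k]\subseteq\mathcal C_0$ (using $H_k\ge H$ in the fixed case, or the accepted certificate \eqref{eq:ftest} in the adaptive case). It also gives the decrease $\tfrac12 s^T\nabla^2f s+\tfrac12H_k\|s\|^3$. This produces the replacement constant $(H/2)^{1/3}$ quoted in the statement.

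For the defect, Lemma~\ref{lem:defect} becomes $\|\Pi_S\nabla f(x+s)\|\le(2+1/6)H_k\|s\|^2$, which is again bounded by a constant times $H_k^{1/3}d_k^{2/3}$. Nonzero steps follow because $\rho=0$ is impossible at $s=0$ when $\nabla f\ne0$.

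The main obstacle is the gradient-decrease Lemma~\ref{lem:gradient-decrease}. It used the minimizing property against the competitor $-\nabla f/\mathcal L$, which an inexact $s$ does not have. I would replace it by a model-value comparison. The reduced model $m$ is convex with strongly convex cubic growth, and the residual bound together with convexity gives
\[
 m(s)-m(s_{\rm opt})\le \|\rho\|\,\|s-s_{\rm opt}\|+\text{(cubic comparison)}.
\]
Both are then controlled by $\|s\|^3$ terms absorbed into the guaranteed decrease. An alternative that avoids $s_{\rm opt}$ is to lower bound $\|s\|$ directly. From the residual equation, $\|\nabla f(x)\|\le (L_0+\tfrac76H_k\|s\|)\|s\|$. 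Hence $\|s\|\gtrsim\min\{\|\nabla f\|/L_0,\sqrt{\|\nabla f\|/H}\}$. Combined with $-\nabla f^Ts\ge s^T\nabla^2 f s+\tfrac56H\|s\|^3$ and Cauchy--Schwarz in the $\nabla^2f$ seminorm, this yields $d_k\ge\|\nabla f\|^2/(7(L_0+\sqrt{(12/5)HL_0D_0}))$. This matches the coefficient in $q$ and is the route I would pursue first.

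With these three one-step bounds (decrease, defect, gradient) re-established with new constants, the proofs of Lemmas~\ref{lem:reserve}--\ref{lem:window}, Theorem~\ref{thm:main}, Lemma~\ref{lem:epoch-bound} and Theorem~\ref{thm:restarted-rate} go through verbatim with changed universal constants. Theorem~\ref{thm:local-pl} uses only its two decrease inequalities, so it transfers with $q$ as displayed. In the adaptive case the backtracking bounds of Appendix~\ref{app:backtracking} and Theorem~\ref{thm:ag-rate} apply unchanged, with the step bound requiring $H_k\ge H_{\min}$.

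For the local projection Lemma~\ref{lem:local-projection}, I would add $\rho^k$ to the remainder $r^k$. Since $\|\rho^k\|=O(\|s^k\|^2)=O(\|x^k-x^\star\|_\star^2)$, given bounded coefficients and $\|s^k\|\le 2\|\nabla f\|/\mu_\star$, the decomposition and Theorem~\ref{thm:local} follow as before. Finally, convergence of any algebraic solver to the reduced minimizer $a_{\rm opt}\ne0$ makes the left side of \eqref{eq:inexact-model} tend to zero while the right side tends to $H_k\|a_{\rm opt}\|^2/6>0$. The test is therefore eventually met, and it is evaluated from $B^k$ and $(Q^k)^T\nabla f(x^k)$ only.
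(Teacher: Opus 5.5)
Your proposal is correct and follows essentially the paper's proof. The residual-perturbed inner-product identity, the first-exit argument, the decrease $\tfrac12 s^T\nabla^2f(x)s+\tfrac12H_k\|s\|^3$, the defect constant $\tfrac{13}{6}H_k$, absorbing the residual into the local remainder, and the limiting argument for finite termination all match the paper. Your route (b) for the gradient bound also matches, once you replace the crude factor $L_0\|s\|$ by the seminorm estimate $\|\nabla^2f(x)s\|^2\le L_0\,s^T\nabla^2f(x)s$; the lower bound on $\|s\|$ by itself does not close the argument. The paper splits $\nabla f(x)$ into the three residual terms, obtains $\|\nabla f(x)\|^2\le3L_0\,s^T\nabla^2f(x)s+\tfrac{37}{12}H_k^2\|s\|^4$, and uses $H_k\|s\|\le\sqrt{(6/5)H_kL_0D_0}$ with $H_k\le2H$ to get the constant $7$.
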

The proof is given in Appendix~\ref{app:finite-solves}.

The residual test must also preserve the progress used by the restart
controller. The next statement makes its changed global constant explicit
and retains the accelerated local rate and finite transition bounds.

\begin{corollary}[Accelerated restart control with finite model solves]
\label{cor:inexact-acceleration}
Under the fixed or adaptive assumptions of
Proposition~\ref{prop:inexact-model}, apply the controller of
Section~\ref{sec:automatic-acceleration}, requiring
\eqref{eq:inexact-model} at every model solve.
Use either the stated three-direction space or the optional
four-direction space of Corollary~\ref{cor:three-hvp-restarts}.
Then
\begin{equation}\label{eq:inexact-controlled-global}
 f(x^K)-f_\star\le\frac{5184D_0^2}{(K+1)^2}
 \left[\frac72\left(L_0+\sqrt{\frac{12}{5}HL_0D_0}\right)
                            +2048HD_0\right]
 \qquad(K\ge0).
\end{equation}
If $\nabla^2f(x^\star)\succ0$, the local root bound
\eqref{eq:controlled-local-root} also holds.
Corollary~\ref{cor:explicit-transition}, including its finite
accuracy bound, remains valid with the entry index
\[
 K_{\mathrm{entry}}=\left\lceil72D_0
 \sqrt{\frac{\tfrac72(L_0+\sqrt{(12/5)HL_0D_0})
                             +2048HD_0}
                  {\varepsilon_{\mathrm{loc}}}}\right\rceil.
\]
Its local objective threshold and other numerical constants are unchanged.

The function, gradient, and adaptive-trial bounds of
Theorem~\ref{thm:automatic-acceleration} remain valid. Each accepted
step uses at most three HVPs, or four with the optional memory direction.
Meeting the finite model-residual criterion requires no additional
objective-oracle calls. Reduced coefficients and projections remain
exact, as stipulated in Proposition~\ref{prop:inexact-model}.
\end{corollary}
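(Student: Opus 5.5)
The plan is to redo the proof of Theorem~\ref{thm:automatic-acceleration} and Corollary~\ref{cor:explicit-transition} with the inexact one-step constants that Proposition~\ref{prop:inexact-model} (Appendix~\ref{app:finite-solves}) supplies. Those proofs use only four ingredients: gradient inclusion together with the containment $\Span\{y^{k+1}-y^0,z^k\}\subseteq\mathcal S^k$, level-set invariance and monotonicity, the gradient-decrease bound, and the defect bound $\|e^{k+1}\|\le c\,d_k^{2/3}$. The plan is therefore to read off the inexact versions of the last two from the appendix and substitute them into the epoch potential.

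\textbf{Inexact one-step bounds.} Write the residual in \eqref{eq:inexact-model} as $\rho$. Taking its inner product with $s^k$ gives $\nabla f(x^k)^Ts^k\le-(s^k)^T\nabla^2f(x^k)s^k-H_k\|s^k\|^3+\tfrac{H_k}{6}\|s^k\|^3$. The Taylor upper bound (via the invariance argument of Lemma~\ref{lem:invariance}, which only used negativity of the cubic expression along $ts^k$) then gives decrease at least $\tfrac{H_k}{2}\|s^k\|^3$. Subtracting the residual in the gradient estimate gives $\|\Pi_{\mathcal S^k}\nabla f(x^{k+1})\|\le\tfrac{13}{6}H_k\|s^k\|^2$, hence a defect coefficient of order $(H_k)^{1/3}$ with a changed numeric factor. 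The comparison with $-\nabla f/\mathcal L$ in Lemma~\ref{lem:gradient-decrease} is the delicate point. An inexact $s^k$ is not a model minimizer, so I would first bound the model value at $s^k$ by the exact minimum plus a term controlled by the residual. I expect this to produce the coefficient $7(L_0+\sqrt{(12/5)HL_0D_0})$ that appears in the proposition's $q$. These are exactly the constants encoded in \eqref{eq:inexact-controlled-global}: $\tfrac72\mathcal L'$ replaces $\tfrac32\mathcal L$, and $2048HD_0$ replaces $\tfrac{2048}{3}HD_0$ after Young's inequality \eqref{eq:weighted-young} is applied with the new defect constant. For adaptive steps, the same bounds hold with $H_k\le 2H$-type bounds from Appendix~\ref{app:backtracking}.

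\textbf{Global bound.} Lemma~\ref{lem:epoch-bound} is rerun with the new constants to get a completed-epoch bound with additive term $C'_{\mathrm{restart}}/81$. The induction of Theorem~\ref{thm:restarted-rate} is then applied, followed by Step~1 of Theorem~\ref{thm:automatic-acceleration}, which yields \eqref{eq:inexact-controlled-global} with the factor $5184=64\cdot81$ unchanged. The optional fourth direction is handled exactly as in Corollary~\ref{cor:three-hvp-restarts}, since only the inclusions are used.

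\textbf{Local rate.} Lemma~\ref{lem:local-epoch} is repeated on a small sublevel. There the new $\Gamma'$ is still $O(\kappa_\star)$ because $HD_r\to0$, and the gradient-decrease fraction $\theta'$ is $\ge c/\kappa_\star$. The main obstacle is keeping the numerical constants $116$ in \eqref{eq:controlled-local-root} and $\varepsilon_{\mathrm{loc}}$ unchanged. For this I would exploit that near $x^\star$ the local constants are computed from the \emph{local} sublevel rather than from $L_0,D_0$. I would first try shrinking $r$ so that the extra factor in $\mathcal L'$ is absorbed, with $\mathcal L'\le3L_\star$ still holding because $HD_r\to0$. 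If this fails, I would show that $t_\star<7\sqrt{\kappa_\star}$ and $\theta'\ge1/(36\kappa_\star)$ already hold with margin, possibly requiring $\rho$ smaller, which is permitted since $\varepsilon_{\mathrm{loc}}$ was chosen conservatively. Steps~3--4 and the corollary's tail argument then go through verbatim with only $K_{\mathrm{entry}}$ changed via $C'_{\mathrm{restart}}$.

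\textbf{Oracle counts.} These are unchanged, by Proposition~\ref{prop:inexact-model}.
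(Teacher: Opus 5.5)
Your route matches the paper's: import the inexact one-step bounds of Proposition~\ref{prop:inexact-model}, rerun the epoch potential, the deterministic-epoch induction and the global stage, then redo the local argument with new $\Gamma$ and $\theta$. Your global bookkeeping is right, since $\tfrac{512}{27}\cdot\tfrac{2197}{54}<1024$ in units of $H_{\max}=2H$.

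The gap is in the local step, exactly where the statement claims that the constants $116$, $F_{\mathrm{loc}}=2(36\kappa_\star)^3$ and $\varepsilon_{\mathrm{loc}}$ are unchanged. These rest on two bounds:
\begin{itemize}
\item $\Gamma\le19\kappa_\star$, which gives $t_\star<7\sqrt{\kappa_\star}$ and $\ell_{\max}\le29\sqrt{\kappa_\star}$;
\item $\theta\ge1/(9\kappa_\star)$, which gives $4/\theta\le36\kappa_\star$.
\end{itemize}
The obstruction is not the square-root term in $\mathcal L'$ but the prefactor: the gradient-decrease constant is now $C_{\mathrm{grad}}=7\mathcal L'$ rather than $3\mathcal L$. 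Your target $\mathcal L'\le3L_\star$, the analogue of the original Step~2 bound, used with the inherited choices $\mu=\mu_\star/2$ and Hessian bound $2L_\star$, gives only $C_{\mathrm{grad}}\le21L_\star$. That yields $\Gamma\lesssim42\kappa_\star$ and $\theta\gtrsim1/(21\kappa_\star)$. Even letting $D_r\to0$ with those choices only yields $\Gamma\le28\kappa_\star+o(1)$ and $\theta\ge1/(14\kappa_\star)-o(1)$. Then neither $t_\star<7\sqrt{\kappa_\star}$ nor $4/\theta\le36\kappa_\star$ is certified.

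Your fallback does not repair this. The target $\theta'\ge1/(36\kappa_\star)$ is the wrong one: it would give $F=2(144\kappa_\star)^3$ and change the $T_{\mathrm{adapt}}$ threshold. Shrinking $\rho$ is also not free, because the localization argument ties $\varepsilon_{\mathrm{loc}}=\mu_\star\rho^2/8$ to $\rho$.

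The missing observation is that you must spend the factor-two slack in $\mu_\star/2$ and $2L_\star$, and the existing transition region already lets you do so. With $\rho=\mu_\star/(4096H)$, the Hessian there is within $\mu_\star/1024$ of $\nabla^2f(x^\star)$. So you may take $\mu=\tfrac{1023}{1024}\mu_\star$, $L\le\tfrac{1025}{1024}L_\star$ and $D\le\rho$. Then:
\begin{itemize}
\item $\sqrt{(12/5)HLD}\le L_\star/32$, hence $C_{\mathrm{grad}}<8L_\star$;
\item $\Gamma\le\tfrac{1024}{1023}(8\kappa_\star+1)<10\kappa_\star$;
\item $\theta\ge\tfrac{1023}{4096\kappa_\star}>\tfrac1{9\kappa_\star}$.
\end{itemize}
This is the paper's verification, and it leaves $\rho$ and $\varepsilon_{\mathrm{loc}}$ untouched. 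For the asymptotic root bound \eqref{eq:controlled-local-root}, simply choose $r$ so that $\mathcal C_r$ lies in this region.

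A smaller point: you left the inexact gradient constant at ``I expect.'' Your comparison with the exact minimizer $s_\ast$ needs a bound on $\|s^k-s_\ast\|$, and it can be closed. Strong monotonicity of $u\mapsto\widehat H\|u\|u$ on $\mathcal S^k$ gives $\|s^k-s_\ast\|\le\|s^k\|/3$. Hence $m(s^k)\le m(s_\ast)+\tfrac{\widehat H}{18}\|s^k\|^3$, and so $\|\nabla f(x^k)\|^2\le\tfrac{10}{3}\bigl(L+\sqrt{2\widehat HLD}\bigr)d_k$. This is smaller than the paper's $7\bigl(L+\sqrt{\tfrac65\widehat HLD}\bigr)$, which it therefore implies. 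The paper gets its constant by splitting the projected stationarity equation into three terms instead.
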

The proof, including the transition constants, is given in
Appendix~\ref{app:finite-solves}.

\section{CaCuSESOP: a gradient trial with subspace memory and restarts}
\label{sec:cacusesop}

Following the Casual Cubic Technique (CaCuTe)
\cite{tupitsa2025}, we first try a cheap scaled-gradient step and
use a fallback when its certificate fails. Here the candidate is
the explicit gradient-line minimum of the cubic model. We first
complete its backtracking search using one HVP. We then test whether
the accepted line step is sufficiently stationary in the full SESOP
space. If it is not, we form the remaining HVPs and minimize the cubic
model over that space. The name CaCuSESOP combines
\emph{Casual Cubic} with \emph{Sequential Subspace Optimization}.

The coefficient is selected by backtracking, and the epochs follow
Algorithm~\ref{alg:controlled-restarts}. The practical space contains
the three directions required by the proof, the previous step, and a
Hessian--gradient direction. We first state the gradient-trial
certificate and prove the resulting convergence guarantees. We then
explain the roles of the two additional directions.

\subsection{An explicit gradient candidate and its certificate}

Fix a nonstationary iterate $x^k$ within an epoch starting at $y^0$.
Here $k$ is the accepted-step counter of the controller; $y^0$ remains
fixed until the next restart. The superscripts on the stored aggregate
and step refer to the same counter.
Let $\mathcal S^k$ be the span of
$\nabla f(x^k),s^{k-1},x^k-y^0,z^k$, and
$\nabla^2f(x^k)\nabla f(x^k)$. To construct it, first set
\[
 q_1^k=\frac{\nabla f(x^k)}{\|\nabla f(x^k)\|},
 \qquad v^k=\nabla^2f(x^k)q_1^k,
\]
which costs one HVP. The vector $v^k$ is a scaled
Hessian--gradient direction. Choose an orthonormal basis $Q^k$ of
$\mathcal S^k$ whose first column is $q_1^k$. Linearly dependent
directions are removed, so the number of columns satisfies $r_k\le5$.
For a positive trial coefficient $\widehat H$, the model is
\[
 m_{\widehat H}(u)
 =\nabla f(x^k)^Tu+\tfrac12u^T\nabla^2f(x^k)u
                         +\tfrac{\widehat H}{3}\|u\|^3.
\]
Its minimum along the negative gradient is $p=-\alpha q_1^k$, where
\begin{equation}\label{eq:cacu-gradient-root}
 \widehat H\alpha^2+
 \bigl((q_1^k)^T\nabla^2f(x^k)q_1^k\bigr)\alpha
 =\|\nabla f(x^k)\|,\qquad \alpha>0.
\end{equation}
This is a scalar quadratic, with an explicit positive root. Only
the already computed vector $v^k$ is needed to compute it.

Function decrease alone does not justify replacing the subspace step.
We also require
\begin{equation}\label{eq:cacu-trial-residual}
 \left\|\Pi_{\mathcal S^k}
  \bigl(\nabla f(x^k)+\nabla^2f(x^k)p
                         +\widehat H\|p\|p\bigr)\right\|
 \le \widehat H\|p\|^2.
\end{equation}
Here $\Pi_{\mathcal S^k}$ is the Euclidean orthogonal projector.
The component along $q_1^k$ is zero by
\eqref{eq:cacu-gradient-root}. Thus the certificate is equivalently
\begin{equation}\label{eq:cacu-transverse-test}
 \left\|\bigl((Q^k)^T\nabla^2f(x^k)q_1^k\bigr)_{2:r_k}\right\|
 \le \widehat H\alpha,
\end{equation}
where the subscript selects all components except the first, and the
norm is zero when $r_k=1$. This form uses the first HVP alone and avoids
subtracting quantities whose longitudinal components cancel.

The order of the tests is essential for the oracle cost. We first find
a coefficient for which the line candidate satisfies the two Taylor
certificates. Only then do we apply
\eqref{eq:cacu-transverse-test}. If this residual test passes, the
iteration ends after one HVP. If it fails, the line candidate is
discarded, the remaining model columns are formed, and the full
subspace step starts from the same coefficient. Further backtracking
changes only the scalar coefficient and reuses all cached HVPs.

\begin{algorithm}[H]
\caption{CaCuSESOP: cascaded adaptive step within a controlled epoch}
\label{alg:cacusesop}
\begin{algorithmic}[1]
\Require Current $x^k,f(x^k),\nabla f(x^k)$; epoch starting point $y^0$,
aggregate $z^k$, weight $\omega_k$, previous step $s^{k-1}$;
previous coefficient $H_{k-1}>0$
\If{$\nabla f(x^k)=0$}
 \State \Return $x^k$
\EndIf
\State $q_1^k\gets\nabla f(x^k)/\|\nabla f(x^k)\|$
\State $v^k\gets\nabla^2f(x^k)q_1^k$ \Comment{one HVP}
\State $\mathcal S^k\gets
 \Span\{\nabla f(x^k),s^{k-1},x^k-y^0,z^k,v^k\}$
\State Form an orthonormal basis $Q^k$ of $\mathcal S^k$
with first column $q_1^k$
\State $\widehat H\gets H_{k-1}/2$
\Repeat
 \State Compute $\alpha$ from \eqref{eq:cacu-gradient-root}
 and set $p\gets-\alpha q_1^k$
 \State Evaluate $f(x^k+p)$
 \State $a_{\mathrm{line}}\gets\mathrm{false}$
 \If{the function certificate \eqref{eq:ftest} holds}
  \State Evaluate $\nabla f(x^k+p)$
  \State $a_{\mathrm{line}}\gets$ the truth value of
  the gradient certificate \eqref{eq:gtest}
 \EndIf
 \If{not $a_{\mathrm{line}}$}
  \State $\widehat H\gets2\widehat H$
 \EndIf
\Until{$a_{\mathrm{line}}$}
\If{\eqref{eq:cacu-transverse-test} holds}
 \State $s\gets p$
\Else
 \State Form and cache the remaining columns of
 $\nabla^2f(x^k)Q^k$
 \Repeat
  \State $s\gets\argmin_{u\in\mathcal S^k}m_{\widehat H}(u)$
  \Comment{cached reduced model}
  \State Evaluate $f(x^k+s)$
  \State $a_{\mathrm{model}}\gets\mathrm{false}$
  \If{the function certificate \eqref{eq:ftest} holds}
   \State Evaluate $\nabla f(x^k+s)$
   \State $a_{\mathrm{model}}\gets$ the truth value of
   the gradient certificate \eqref{eq:gtest}
  \EndIf
  \If{not $a_{\mathrm{model}}$}
   \State $\widehat H\gets2\widehat H$
  \EndIf
 \Until{$a_{\mathrm{model}}$}
\EndIf
\State $s^k\gets s$, $x^{k+1}\gets x^k+s^k$, $H_k\gets\widehat H$
\State $\mathcal V^{k+1}\gets\Span\{x^{k+1}-y^0,z^k\}$
\State $h^{k+1}\gets\nabla f(x^{k+1})-
               \Pi_{\mathcal V^{k+1}}\nabla f(x^{k+1})$
\State $\omega_{k+1}\gets(1+\sqrt{1+4\omega_k^2})/2$
\State $z^{k+1}\gets z^k+\omega_{k+1}h^{k+1}$
\end{algorithmic}
\end{algorithm}

Use this step in every epoch of
Algorithm~\ref{alg:controlled-restarts}. At an epoch boundary, reset
the starting point, aggregate, and weight as specified there; keep the current
gradient, the last accepted coefficient, and the previous displacement.
At the first point of the whole run, take $s^{-1}=0$ and
$H_{-1}=H_{\mathrm{init}}$, with the initialization conditions of
Algorithm~\ref{alg:adaptive-cubic}.
The two Taylor tests and the coefficient doubling are those already
defined in Section~\ref{sec:adaptive}. The difference is only their
order: the line search is completed before the fallback is considered.

We next verify the two estimates needed by the global and restart
proofs. The gradient candidate has exact stationarity along its own
line, while its residual in the larger space is controlled by the test.

\begin{lemma}[One-step bounds for CaCuSESOP]\label{lem:cacu-budgets}
Under Assumptions~\ref{ass:convex}--\ref{ass:hessian}, let $s^k$ be
an accepted step of Algorithm~\ref{alg:cacusesop} with an exact
subspace fallback. Put $d_k=f(x^k)-f(x^{k+1})$.
Backtracking terminates, and
\begin{align}
 d_k&\ge\tfrac12(s^k)^T\nabla^2f(x^k)s^k
                     +\tfrac{2H_k}{3}\|s^k\|^3,
 \label{eq:cacu-cubic-decrease}\\
 \|\nabla f(x^k)\|^2
 &\le3\bigl(L_0+\sqrt{4HL_0D_0}\bigr)d_k,
 \label{eq:cacu-gradient-budget}\\
 \|e^{k+1}\|
 &\le\left(\tfrac{243}{2}H\right)^{1/3}d_k^{2/3}.
 \label{eq:cacu-defect-budget}
\end{align}
Here $d_k$ is the accepted decrease, and
$e^{k+1}=\Pi_{\mathcal V^{k+1}}\nabla f(x^{k+1})$ is the gradient
component removed before updating the aggregate.
At most $r_k\le5$ HVPs are used in total over all trials of the step.
If the accepted line candidate passes the residual test, the total is one.
\end{lemma}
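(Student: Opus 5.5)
I will split by which branch produced the accepted step and reduce each branch to the one-step arguments of Lemmas~\ref{lem:invariance}--\ref{lem:gradient-decrease}, using the accepted Taylor certificates \eqref{eq:ftest}--\eqref{eq:gtest} in place of the exact Hessian-Lipschitz estimates.

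\textbf{Termination.} First I establish that backtracking terminates with $H_k\le 2H$, or more precisely with a constant bound such as $\max\{H_{\mathrm{init}},2H\}$ from the initialization condition in Appendix~\ref{app:backtracking}. For any trial coefficient $\widehat H\ge H$, both the line candidate and the subspace minimizer satisfy the stationarity identity along the step, $\nabla f(x^k)^Ts=-s^T\nabla^2f(x^k)s-\widehat H\|s\|^3$. For the line candidate this follows from \eqref{eq:cacu-gradient-root}. The exit-time argument of Lemma~\ref{lem:invariance} then keeps $[x^k,x^k+s]\subseteq\mathcal C_0$. Consequently \eqref{eq:taylor-upper} and \eqref{eq:taylor-gradient} imply both certificates, and doubling stops at the latest once $\widehat H\ge H$. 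The HVP count is immediate: one HVP for $v^k$, which supplies the first column, and $r_k-1\le4$ further columns only in the fallback, all cached across trials.

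\textbf{Cubic decrease.} Next I prove \eqref{eq:cacu-cubic-decrease}. The accepted function certificate \eqref{eq:ftest} combined with the stationarity-along-step identity gives $d_k\ge\frac12 s^T\nabla^2 f s+\frac{2H_k}{3}\|s\|^3$ exactly as in Lemma~\ref{lem:invariance}. This works in both branches. For the line candidate, note that $p$ minimizes the model on $\Span\{q_1^k\}$.

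\textbf{Defect bound.} For \eqref{eq:cacu-defect-budget}, I use $\mathcal V^{k+1}\subseteq\mathcal S^k$, which holds because $s^k$, $x^k-y^0$ and $z^k$ all lie in $\mathcal S^k$. Then \eqref{eq:gtest} together with the projected residual gives
\[
\|e^{k+1}\|\le H_k\|s\|^2+\|\Pi_{\mathcal S^k}(\nabla f+\nabla^2 f s+H_k\|s\|s)\|.
\]
The second term vanishes in the fallback and is at most $H_k\|s\|^2$ by \eqref{eq:cacu-trial-residual} in the line branch. Hence $\|e^{k+1}\|\le 2H_k\|s\|^2$. Combining this with $\|s\|^3\le 3d_k/(2H_k)$ gives $\|e^{k+1}\|\le 2H_k^{1/3}(3/2)^{2/3}d_k^{2/3}$, that is, $(18H_k)^{1/3}d_k^{2/3}$. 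Using $H_k\le \frac{27}{4}H$ would give $(\frac{243}{2}H)^{1/3}$, so the stated constant corresponds to an upper bound $H_k\le 27H/4$. This is looser than $2H$, so I would simply use whatever bound on $H_k$ the appendix provides and verify that it is at most $27H/4$.

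\textbf{Gradient bound, the main obstacle.} The hardest step is \eqref{eq:cacu-gradient-budget}, because the line candidate does not minimize the model over all of $\mathcal S^k$. The comparison point $-\nabla f/\mathcal L$ nevertheless lies on the gradient line, and $p$ minimizes the model on that line. Therefore the argument of Lemma~\ref{lem:gradient-decrease} still applies, with the model value at $p$ bounded by that at $-\nabla f/\mathcal L$ and with $\widehat H=H_k\le 2H$ replacing $H$. The fallback has an even smaller model value. The accepted certificate \eqref{eq:ftest} turns the model bound into a function bound. With $\mathcal L=L_0+\sqrt{4HL_0D_0}$ we get $\mathcal L^2\ge 4HL_0D_0\ge 2H_k\|\nabla f\|$, so the parenthesis stays at least $1/3$.

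The remaining care is to check that the Taylor certificates, rather than a global Lipschitz bound, are the only ingredients used, so that the adaptive version inherits the bounds.
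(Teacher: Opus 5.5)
\textbf{A step in your defect bound fails.} Your termination, cubic-decrease, gradient-budget and HVP-count arguments match the paper's proof, but the defect step contains an error. The inequality
\[
\|e^{k+1}\|\le H_k\|s\|^2+\bigl\|\Pi_{\mathcal S^k}\bigl(\nabla f(x^k)+\nabla^2f(x^k)s+H_k\|s\|s\bigr)\bigr\|
\]
drops the regularization gradient. Because $s\in\mathcal S^k$, the correct decomposition has three terms:
\[
\Pi_{\mathcal S^k}\nabla f(x^k+s)=\Pi_{\mathcal S^k}\bigl[\nabla f(x^k+s)-\nabla f(x^k)-\nabla^2f(x^k)s\bigr]+\Pi_{\mathcal S^k}\nabla m_{H_k}(s)-H_k\|s\|s .
\]
This is why Lemma~\ref{lem:defect} gets $2H\|s\|^2$ even though the exact model residual is zero. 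In the fallback, your inequality would give only $H_k\|s\|^2$; the missing half is exactly $\|H_k\|s\|s\|=H_k\|s\|^2$.

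In the line branch, your claimed $2H_k\|s\|^2$ is not supported by the certificates either. The transverse residual is orthogonal to $q_1^k$ and has norm up to $H_k\|p\|^2$. The term $-H_k\|p\|p$ is parallel to $q_1^k$. Together they already reach norm $\sqrt2\,H_k\|p\|^2$, and the Taylor remainder allowed by \eqref{eq:gtest} can add another $H_k\|p\|^2$.

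\textbf{The correct count and the origin of the constant.} The paper applies the triangle inequality to the three terms. This gives $\|\Pi_{\mathcal S^k}\nabla f(x^{k+1})\|\le3H_k\|s^k\|^2$ in the line branch and $2H_k\|s^k\|^2$ in the fallback. With $\|s^k\|^3\le3d_k/(2H_k)$,
\[
\|e^{k+1}\|^3\le27H_k^3\bigl(3d_k/(2H_k)\bigr)^2=\tfrac{243}{4}H_kd_k^2\le\tfrac{243}{2}Hd_k^2,
\]
using $H_k\le2H$. Your termination argument supplies $H_k\le2H$, provided $H_{\mathrm{init}}\le2H$.

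So the constant $(243H/2)^{1/3}$ does not come from some looser coefficient bound $H_k\le27H/4$. It records the extra residual $H_k\|s\|^2$ that the line-branch test \eqref{eq:cacu-trial-residual} permits. The mismatch you noticed was a symptom of the dropped term.
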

\begin{proof}
Both candidates minimize the cubic model over a space containing the
gradient: its line or $\mathcal S^k$. Both therefore satisfy
\[
 \nabla f(x^k)^Ts^k+
 (s^k)^T\nabla^2f(x^k)s^k+H_k\|s^k\|^3=0.
\]
The first-exit argument of Lemma~\ref{lem:invariance} applies to the
line candidate at every trial coefficient at least $H$, so the first
backtracking loop terminates. If the residual test fails, the full
model step starts at the coefficient accepted by that loop. The same
first-exit argument applies to this candidate at every coefficient at
least $H$, so the second loop also terminates. Both loops use the same
dyadic grid. As in Proposition~\ref{prop:counts}, the first tested
coefficient on that grid that is at least $H$ is smaller than $2H$;
hence the final accepted coefficient satisfies $H_k\le2H$.
At an accepted step, the displayed identity and the function
certificate give \eqref{eq:cacu-cubic-decrease}.
Both candidates have model value no greater than that at any negative
gradient step. The comparison in
Lemma~\ref{lem:gradient-decrease} therefore gives
\eqref{eq:cacu-gradient-budget}.

For the line candidate, \eqref{eq:cacu-trial-residual} bounds the
projected model residual by $H_k\|s^k\|^2$; for an exact fallback this
residual is zero. Adding the gradient Taylor remainder and the model
regularization gradient gives
\[
 \|\Pi_{\mathcal S^k}\nabla f(x^{k+1})\|
 \le3H_k\|s^k\|^2.
\]
Since $\mathcal V^{k+1}\subseteq\mathcal S^k$, the cubic decrease gives
\[
 \|e^{k+1}\|\le
 3H_k\left(\frac{3d_k}{2H_k}\right)^{2/3}.
\]
Cubing its coefficient yields
$27H_k^3(3/(2H_k))^2=243H_k/4$, proving
\eqref{eq:cacu-defect-budget}.
The first HVP is reused if the remaining $r_k-1$ products are formed;
all products are retained across backtracking trials. Consequently the
line branch uses one HVP and the fallback branch uses at most $r_k\le5$
in total. At the transition to the fallback, the already accepted line
candidate causes at most one extra pair of function and gradient
evaluations, but no extra HVP.
\end{proof}

\subsection{The combined global and local guarantee}

These estimates have exactly the form used in the epoch proof.
Only the defect coefficient is larger. Consequently the same
controller applies to CaCuSESOP, including the finite transition
into the accelerated local regime.

\begin{corollary}[CaCuSESOP with controlled restarts]
\label{cor:cacusesop-rates}
Under Assumptions~\ref{ass:convex}--\ref{ass:hessian}, use
Algorithm~\ref{alg:cacusesop} in the epochs of
Algorithm~\ref{alg:controlled-restarts}. With an exact fallback,
the trajectory is monotone and, for every $K\ge0$,
\[
 f(x^K)-f_\star\le\frac{5184D_0^2}{(K+1)^2}
 \left[\tfrac32\bigl(L_0+\sqrt{4HL_0D_0}\bigr)
                                  +2304HD_0\right].
\]
Under Assumption~\ref{ass:local}, the accelerated local root bound
\eqref{eq:controlled-local-root} holds without supplying the local
strong-convexity parameter. The finite entry and adaptation statement
of Corollary~\ref{cor:explicit-transition} holds with the entry index
\[
 K_{\mathrm{entry}}=\left\lceil72D_0
 \sqrt{\frac{\tfrac32(L_0+\sqrt{4HL_0D_0})
                              +2304HD_0}
                  {\varepsilon_{\mathrm{loc}}}}\right\rceil.
\]
The conclusions apply to any enlarged search space that retains the
three directions used by the proof, including the five-direction space
of Algorithm~\ref{alg:cacusesop}.

The fallback may instead satisfy the finite criterion
\eqref{eq:inexact-model}. Then, for every $K\ge0$,
\[
 f(x^K)-f_\star\le\frac{5184D_0^2}{(K+1)^2}
 \left[\tfrac72\left(L_0+\sqrt{\tfrac{12}{5}HL_0D_0}\right)
                                  +2304HD_0\right].
\]
The local root bound and finite-transition conclusion are unchanged,
except that the numerator inside the square root defining
$K_{\mathrm{entry}}$ becomes
$\tfrac72(L_0+\sqrt{(12/5)HL_0D_0})+2304HD_0$.
This criterion is required only for the fallback; the explicit
gradient candidate uses \eqref{eq:cacu-trial-residual}.
\end{corollary}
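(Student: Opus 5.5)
The proof reruns the controller analysis of Theorem~\ref{thm:automatic-acceleration}, using the one-step bounds of Lemma~\ref{lem:cacu-budgets} in place of Proposition~\ref{prop:iteration}. That theorem's proof uses only four ingredients: gradient inclusion, monotonicity and invariance in $\mathcal C_0$; the gradient-decrease bound $\|\nabla f(x^k)\|^2\le3\mathcal L d_k$; the defect bound $\|e^{k+1}\|\le c_H d_k^{2/3}$; and the corrected-gradient orthogonalities relative to the epoch start. The orthogonalities are unchanged, because Algorithm~\ref{alg:cacusesop} uses the same correction space $\mathcal V^{k+1}=\Span\{x^{k+1}-y^0,z^k\}$. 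The containment $\mathcal V^{k+1}\subseteq\mathcal S^k$ still holds: $x^k-y^0$ and $z^k$ are columns of $\mathcal S^k$, and $s^k$ lies in $\mathcal S^k$ in both branches, since $p$ is parallel to the gradient. Lemma~\ref{lem:cacu-budgets} gives the two decrease estimates, with $\mathcal L=L_0+\sqrt{4HL_0D_0}$ (as in the adaptive case) and defect coefficient $c_H^3=243H/2$ in place of $18H$ (adaptive: $36H$).

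First I redo the epoch bound of Lemma~\ref{lem:epoch-bound}. Applying \eqref{eq:weighted-young} with $\alpha=D_0c_H\omega_i$ and $\beta=\omega_{i-1}^2/2$ gives the unabsorbed cost
\[
 \frac{4\alpha^3}{27\beta^2}=\frac{16c_H^3D_0^3}{27}\,\frac{\omega_i^3}{\omega_{i-1}^4}
 =72HD_0^3\,\frac{\omega_i^3}{\omega_{i-1}^4}.
\]
Summing over a completed epoch, where the ratio sum is at most $32$, gives $2304HD_0^3$. This replaces $\tfrac{2048}{3}HD_0^3$ in the bracket of $C_{\mathrm{restart}}$. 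Step~1 of Theorem~\ref{thm:automatic-acceleration} (the doubling induction of Theorem~\ref{thm:restarted-rate} followed by the phase argument) then yields the displayed global bound, with $5184=64\cdot81$ unchanged. The initial bound $f(x^0)-f_\star\le L_0D_0^2/2\le C_{\mathrm{restart}}$ still holds.

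For the local statements, Lemma~\ref{lem:local-epoch} holds with $(4096/3)HD/\mu$ replaced by $O(HD/\mu)$, a term that still vanishes as the sublevel shrinks. After shrinking $r$ (respectively $\rho$ in Corollary~\ref{cor:explicit-transition}) so that this term is at most $\kappa_\star$, the constants $\Gamma\le19\kappa_\star$, $\theta\ge1/(9\kappa_\star)$, $t_\star$ and $\ell_{\max}$ are unchanged. Steps~2--4 then reproduce \eqref{eq:controlled-local-root}. The main obstacle is the explicit transition constant: $\varepsilon_{\mathrm{loc}}$ must stay as stated. I would first check that the factor $\mu_\star/(4096H)$ in $\rho$ leaves enough slack: $72HD/\mu\le72\cdot2/4096\le\kappa_\star$, and $\Gamma\le19\kappa_\star$ still holds. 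The entry index then follows from $\lceil8\sqrt{C_{\mathrm{restart}}/\varepsilon_{\mathrm{loc}}}\rceil$ with the new numerator.

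For the enlarged-space claim, I would note, as in Corollary~\ref{cor:three-hvp-restarts}, that every argument above used only these inclusions and the identity $\nabla f^Ts+s^T\nabla^2fs+H_k\|s\|^3=0$. For the inexact fallback, I would take the one-step constants of Proposition~\ref{prop:inexact-model}: coefficient $\tfrac72(L_0+\sqrt{(12/5)HL_0D_0})$ and a defect bound whose cubed constant is at most $243H/2$. The line branch keeps its residual bound $H_k\|s\|^2$, so the larger of the two defect constants governs, and the same computation gives $2304HD_0$ again. Local conclusions transfer as in Corollary~\ref{cor:inexact-acceleration}.
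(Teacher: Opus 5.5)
Your route is the same as the paper's. You rerun the controller analysis with the constants of Lemma~\ref{lem:cacu-budgets}, so the completed-epoch error term becomes $72\cdot32\,HD^3=\frac{512}{27}\cdot\frac{243}{2}HD^3=2304HD^3$, and the global rate follows with $64\cdot81=5184$. That part is correct, and so is the asymptotic root bound, where $r$ may be shrunk freely.

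The step that fails as written is your explicit-transition check, which is exactly where you placed the difficulty.
\begin{itemize}
\item Since $\varepsilon_{\mathrm{loc}}$ is fixed, $\rho=\mu_\star/(4096H)$ cannot be shrunk.
\item The new local-epoch term comes from $2304HD^3\le2304HD\cdot2\Delta/\mu$. It is therefore $4608HD/\mu$, not $72HD/\mu$. You dropped the factor $32$ from the weight-ratio sum and the factor $2$ from $D^2\le2\Delta/\mu$.
\item With $\mu=\mu_\star/2$ this term is only bounded by $9/4$, which is not $\le\kappa_\star$ when $\kappa_\star<9/4$.
\item Combined with the rounded bound $3\mathcal L/\mu\le18\kappa_\star$, you get $\Gamma\le18\kappa_\star+9/4$. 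This exceeds $19\kappa_\star$ for such $\kappa_\star$.
\end{itemize}

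The paper repairs this with the much sharper curvature available in $B(x^\star,\rho)$. From $2H\rho\le\mu_\star/1024$ one may take $\mu=\frac{1023}{1024}\mu_\star$ and $L\le\frac{1025}{1024}L_\star$. Hence $C_{\mathrm{grad}}=3(L+\sqrt{4HLD})<8L_\star$, $\Gamma<\frac{1024}{1023}(8\kappa_\star+\frac98)<10\kappa_\star$, and $\theta>1/(9\kappa_\star)$. So $t_\star$, $\ell_{\max}$, $F_{\mathrm{loc}}$, and $\beta_{\mathrm{loc}}$ are unchanged. Alternatively, $t_\star<7\sqrt{\kappa_\star}$ only needs $\Gamma\le24.5\kappa_\star$, which your cruder bound $20.25\kappa_\star$ also meets; but then say so, instead of claiming $\Gamma\le19\kappa_\star$.

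For the inexact fallback, writing ``as in Corollary~\ref{cor:inexact-acceleration}'' is not quite enough.
\begin{itemize}
\item Redo this check with the line-branch term $4608HD/\mu$ instead of that corollary's $2048H_{\max}D/\mu$.
\item Use $C_{\mathrm{grad}}=7(L+\sqrt{(12/5)HLD})<8L_\star$ from Appendix~\ref{app:finite-solves}.
\item State explicitly that this $C_{\mathrm{grad}}$ dominates the line branch's $3(L+\sqrt{4HLD})$, as you already did for the defect constants.
\end{itemize}
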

\begin{proof}
The epoch potential uses only the gradient and correction-error bounds,
the weight recurrence, and correction-space containment.
With the constants of Lemma~\ref{lem:cacu-budgets}, its completed-epoch
error term is bounded by
\[
 \frac{512}{27}\,\frac{243}{2}HD^3
 =2304HD^3,
\]
where $D$ is the radius of the current sublevel about $x^\star$.
The deterministic-epoch induction and the global stage of the
controller thus give the first displayed rate, since $64\cdot81=5184$.

On a current sublevel, let $L$ bound the Hessian norm and $D$
bound the distance to $x^\star$. If its Hessian also satisfies
$\nabla^2 f\succeq\mu I$, write
$C_{\mathrm{grad}}=3(L+\sqrt{4HLD})$ for the gradient-decrease
constant. The epoch curvature term $\Gamma$ and first-step gap fraction
$\theta$ from the controller proof become
\[
 \Gamma=\frac{C_{\mathrm{grad}}}{\mu}+\frac{4608HD}{\mu},
 \qquad \theta=\frac{2\mu}{C_{\mathrm{grad}}}.
\]
In the transition region of
Corollary~\ref{cor:explicit-transition},
$D\le \mu_\star/(4096H)$,
$\mu=(1023/1024)\mu_\star$, and
$L\le(1025/1024)L_\star$.
Hence $C_{\mathrm{grad}}<8L_\star$ and
\[
 \Gamma<\frac{1024}{1023}
             \left(8\kappa_\star+\frac98\right)
       <10\kappa_\star<19\kappa_\star,
 \qquad \theta>\frac{1}{9\kappa_\star}.
\]
The bounds on completed epochs and adaptation therefore retain the
local exponent and finite-transition constants. The entry estimate
uses the new global constant.

For finite fallbacks, Appendix~\ref{app:finite-solves} gives
$C_{\mathrm{grad}}=7(L+\sqrt{(12/5)HLD})$ and a defect coefficient whose
cube is $(2197/27)H$. These also cover the gradient bound of the
line branch, and $2197/54<243/4$. Use the larger defect coefficient
of the line branch for both. The same epoch term $2304HD^3$ and
the second displayed rate follow. The transition-region bound
$C_{\mathrm{grad}}<8L_\star$ was proved in that appendix, so the same local
argument applies. No extra objective-oracle call is needed to enforce
the finite fallback criterion.
\end{proof}

\subsection{What the two additional directions add}

The convergence proofs use the three-direction space
\[
 \mathcal S_3^k=\Span\{\nabla f(x^k),x^k-y^0,z^k\},
\]
where $y^0$ is the starting point of the current epoch.
Original SESOP also uses previous-step directions
\cite{narkiss2005,zibulevsky2014}. We may retain the preceding step
$s^{k-1}$ by using
\[
 \mathcal S_4^k=\mathcal S_3^k+\Span\{s^{k-1}\}.
\]
Adding this direction preserves the convergence guarantees. At the
same point and cubic coefficient, it can only decrease the minimum
model value:
\[
 \min_{u\in\mathcal S_4^k}m_{\widehat H}(u)
 \le\min_{u\in\mathcal S_3^k}m_{\widehat H}(u).
\]
If the preceding step already belongs to $\mathcal S_3^k$, the spaces
coincide. Any other direction, including a quasi-Newton direction,
may also be added while retaining the three required directions,
the gradient correction, and the model-solve accuracy criterion.
Each added independent direction requires one HVP to form its column
of the reduced Hessian.

CaCuSESOP uses a fifth direction,
\[
 \mathcal S_5^k
 =\mathcal S_4^k+
   \Span\{\nabla^2f(x^k)\nabla f(x^k)\}.
\]
It is available from the first HVP, up to scaling, because
$\nabla^2f(x^k)\nabla f(x^k)=
\|\nabla f(x^k)\|v^k$. Thus the cheap branch still needs only one
HVP. If the fallback is activated, the cached product supplies the
first reduced-Hessian column and at most four further HVPs supply the
remaining columns. The total is therefore at most five.

Both additional directions affect the test for accepting the cheap
gradient step. The candidate $p$ is unchanged, but its model gradient
is tested over the whole chosen subspace. Since
$\mathcal S_3^k\subseteq\mathcal S_4^k\subseteq\mathcal S_5^k$,
\[
 \|\Pi_{\mathcal S_3^k}\nabla m_{\widehat H}(p)\|
 \le\|\Pi_{\mathcal S_4^k}\nabla m_{\widehat H}(p)\|
 \le\|\Pi_{\mathcal S_5^k}\nabla m_{\widehat H}(p)\|.
\]
Thus a candidate may pass the three-direction test and fail the
five-direction test. In that case, Algorithm~\ref{alg:cacusesop}
solves the full subspace model, which can use both recent-step memory
and the Hessian--gradient direction. The test itself still uses only
the first cached HVP because $p$ is a scaled gradient.

\subsection{Why three directions can give slow, oscillating progress}

Two of the three directions can become almost fixed and give little
descent. The current gradient is then the only effective changing
direction, so the method can behave almost like gradient descent
with a line search.

This can happen late in a long epoch. The vectors $x^k-y^0$ and
$z^k$ accumulate steps and gradient corrections from its start.
Their changes at one iteration are
\[
 (x^{k+1}-y^0)-(x^k-y^0)=s^k,\qquad
 z^{k+1}-z^k=\omega_{k+1}h^{k+1}.
\]
Small steps and weighted gradient corrections, relative to these
accumulated vectors, leave both directions nearly unchanged.
Their linear independence does not ensure that they still provide
useful descent.

Gradient descent can alternate between two directions and converge
very slowly on an ill-conditioned problem. For a simple example, take
\[
 q(u)=\tfrac12(\mu u_1^2+Lu_2^2),\qquad 0<\mu<L,
\]
and suppose $\nabla q(u^k)=a(1,1)^T$, with $a\ne0$.
Exact line search gives $s^k=-2\nabla q(u^k)/(L+\mu)$ and
\[
 \begin{aligned}
 \nabla q(u^{k+1})
   &=\frac{L-\mu}{L+\mu}\,a(1,-1)^T,\\
 \nabla q(u^{k+2})
   &=\left(\frac{L-\mu}{L+\mu}\right)^2a(1,1)^T.
 \end{aligned}
\]
The direction repeats after two steps, while
\[
 q(u^{k+1})=\left(\frac{L-\mu}{L+\mu}\right)^2q(u^k).
\]
For large $L/\mu$, this factor is close to one. The iterates zigzag
and the objective decreases very little. This illustrates a possible
mechanism for slow progress; it is not a proof of an exact cycle in
cubic SESOP.

Keeping the previous step supplies a second recent direction,
separately from the two accumulated vectors. In this example,
$\Span\{\nabla q(u^{k+1}),s^k\}=\R^2$, so minimizing the quadratic
over that space reaches its minimizer in the next solve. For cubic
SESOP, the model can likewise combine the new gradient with the
previous step instead of relying on two nearly fixed directions.
The cubic regularization remains present, so the quadratic example
provides intuition rather than a finite-step guarantee.

Restarts help the three-direction version by discarding the old
accumulation. Set the new epoch's starting point to $y^0=x^k$ and
reset the aggregate and weights. After its first step,
\[
 x^{k+1}-y^0=s^k.
\]
One of the three directions is now the recent step itself.
Controlled restarts repeat this reset and retain the proved global
and accelerated local guarantees. CaCuSESOP also keeps the previous
step explicitly and uses the first HVP as a Hessian--gradient direction.

\section{Regularized logistic regression on W8A}
\label{sec:experiments}

\paragraph{Problem.}
We consider logistic regression on the W8A data set with
$\ell_2$ regularization:
\[
 f(x)=\frac1n\sum_{i=1}^n
       \log\bigl(1+\exp(-b_i a_i^Tx)\bigr)
       +\frac{\lambda}{2}\|x\|^2,
 \qquad \lambda=10^{-7}.
\]
All methods use the same data, starting point, and fixed reference
value $f_{\rm ref}$. Figure~\ref{fig:w8a} reports
$f(x^k)-f_{\rm ref}$ against accepted outer iterations and elapsed
time. The time measurement includes inner optimization, rejected
backtracking trials, and restart decisions. Each timing run starts
from a fresh optimizer object.

\paragraph{Methods.}
We compare CaCuSESOP (Algorithm~\ref{alg:cacusesop}), classical SESOP
\cite{narkiss2005}, and Scieur's nonaccelerated Generic Type-I method
\cite{scieur2024}. CaCuSESOP uses the five-direction space, cascaded
backtracking, and controlled restarts described in
Section~\ref{sec:cacusesop}.

For SESOP, we use the current gradient, one previous step, and the two
Nemirovski directions. The restricted nonlinear objective is minimized
by Newton's method with Armijo backtracking, as in the original article.
We take the inner settings from the authors'
\href{https://sites.google.com/site/michaelzibulevsky/home/matlab-code/sesop}{SESOP\_PACK}:
at most seven Newton iterations, a projected-gradient threshold of
$10^{-10}$, initial line-search step $1$, reduction factor $1/4$, and
at most $40$ line-search trials. Thus the implementation follows the
authors' own practical choice rather than tuning the inner solver for
this benchmark. The Newton iteration limit is nevertheless a genuine
implementation hyperparameter: the theory assumes an exact restricted
minimizer and does not prescribe a finite value. On this problem,
limits between five and ten Newton iterations give similar behavior;
beyond roughly fifteen iterations, the additional inner work noticeably
increases elapsed time without a comparable improvement in outer progress.

For Generic Type-I, we use the nonaccelerated variant recommended in
Scieur's experiments, with memory $m=25$, orthogonal forward estimates,
and forward-difference step $h=10^{-9}$. We also use the initial
regularization rule of \cite[Appendix~F.1]{scieur2024}. The linked
\href{https://github.com/windows7lover/QN_With_Guarantees}{repository}
contained no implementation when checked on October~1, 2026, so this
baseline is reimplemented from the published algorithms. The memory
$m$ is another practical hyperparameter. Increasing it retains more
curvature information, but increases storage and the cost of the dense
reduced problem. With fixed $m$, Scieur's theoretical bound retains
secant and projection-error terms; the full/exact rate requires the
substantially more expensive full-space, exact-curvature regime.
Consequently, a memory value that works well on one problem may not
capture enough useful curvature on a larger or more difficult one.

\begin{figure}[htbp]
\centering
\begin{minipage}[t]{0.48\textwidth}
 \centering
 \IfFileExists{figures/logistic_w8a_cacuadgd_sesop_i_l2_1e-07.pdf}{
  \includegraphics[width=\linewidth]{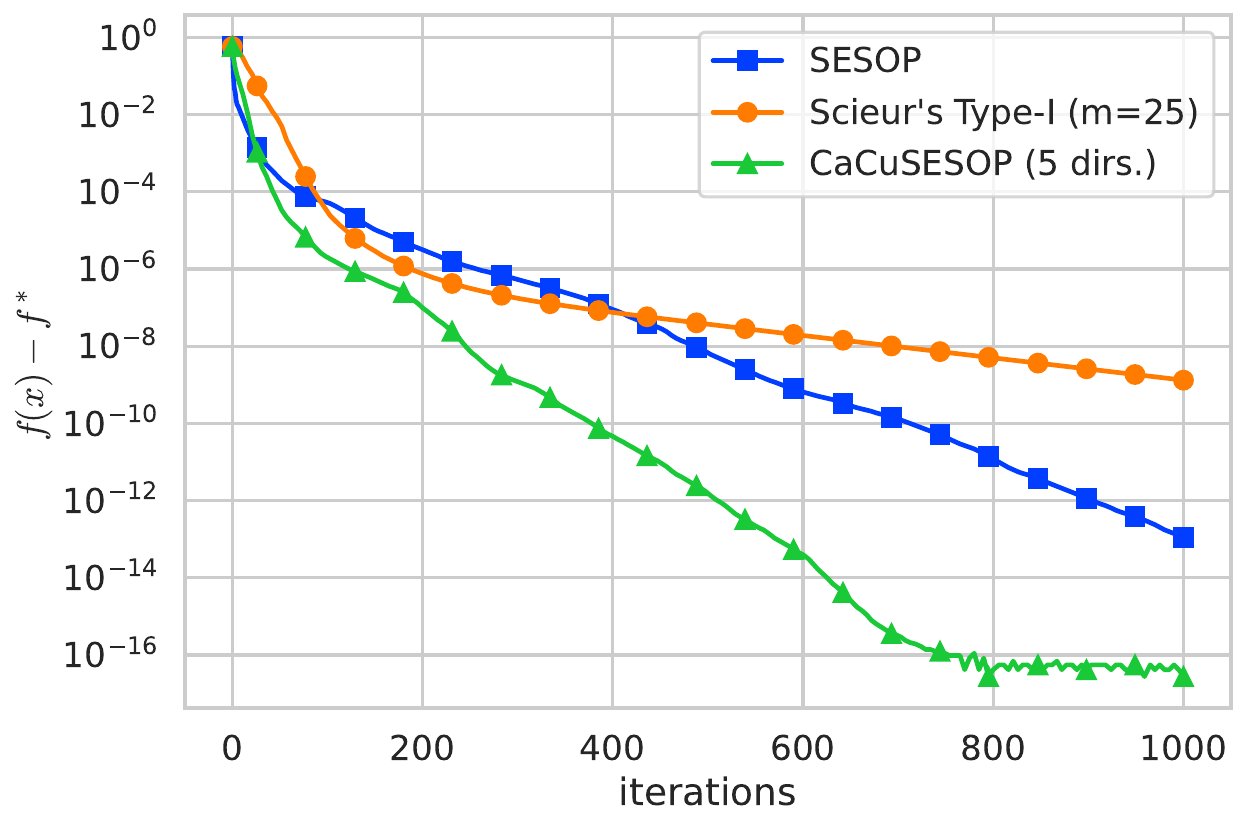}
 }{
  \fbox{\parbox[c][0.19\textheight][c]{0.92\linewidth}{
   \centering Plot to be inserted\\[4pt]
   W8A logistic regression\\[4pt]
   Objective error versus iterations}}
 }
 \par\smallskip\small (a) Accepted outer iterations.
\end{minipage}
\hfill
\begin{minipage}[t]{0.48\textwidth}
 \centering
 \IfFileExists{figures/logistic_w8a_cacuadgd_sesop_t_l2_1e-07.pdf}{
  \includegraphics[width=\linewidth]{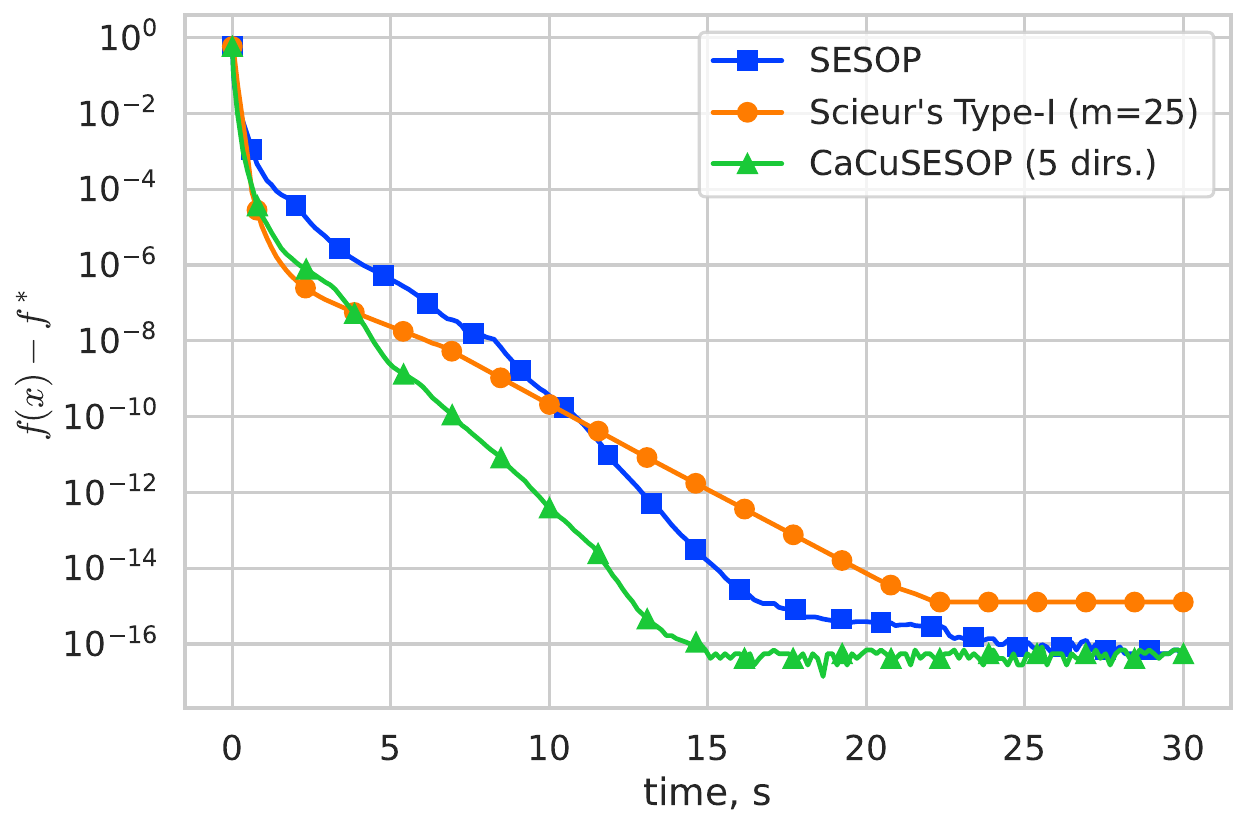}
 }{
  \fbox{\parbox[c][0.19\textheight][c]{0.92\linewidth}{
   \centering Plot to be inserted\\[4pt]
   W8A logistic regression\\[4pt]
   Objective error versus time}}
 }
 \par\smallskip\small (b) Elapsed time in seconds.
\end{minipage}
\caption{Regularized logistic regression on W8A with
$\lambda=10^{-7}$. Both panels use the same objective, starting point,
and reference value.}
\label{fig:w8a}
\end{figure}

\paragraph{Results.}
CaCuSESOP gives the fastest decrease in both accepted iterations and
elapsed time on this benchmark. Its one-HVP gradient branch avoids a
reduced solve whenever the residual certificate passes; otherwise the
five-dimensional cubic fallback is formed once and reused throughout
backtracking. Controlled restarts prevent the accumulated SESOP
directions from becoming ineffective.

Classical SESOP also makes substantial progress per outer iteration,
but every outer step invokes the Newton--Armijo solver for the original
restricted objective. This makes its elapsed-time curve less favorable,
even with the authors' seven-iteration default. Generic Type-I avoids
that nonlinear restricted solve, but its performance depends on the
fixed memory $m$. We use the author's recommended $m=25$ configuration
rather than selecting $m$ from the W8A results. The experiment therefore
compares CaCuSESOP against the practical configurations advocated by the
respective baselines, without benchmark-specific tuning of their main
inner-work parameters.

\section{Conclusion}

CaCuSESOP brings the ingredients of the analysis into one practical
method: an explicit certified gradient trial, a cubic subspace fallback,
previous-step memory, a Hessian--gradient direction, and adaptive
restart control. Three directions suffice for its convergence orders.
The practical five-direction space gives the fallback two additional
directions and imposes a stronger gradient-trial test.

The cubic model turns directional curvature into a source of
decrease. In a SESOP space, the remaining obstacle is the loss of
exact orthogonality to the new gradient. The same cubic decrease
bounds this defect and permits a corrected gradient aggregate.
This gives a global rate of order $\sqrt{\log K}/K^2$ with at most
three HVPs per iteration, followed by automatic local linear
convergence at a nondegenerate minimizer. Deterministic restarts
improve the global rate to $O(K^{-2})$ with the same oracle cost.
An adaptation of the classical sharp CG/Krylov bound shows that this
global exponent is sharp in the stated gradient/HVP linear-span
oracle class, including momentum.
Backtracking selects the regularization coefficient without a
supplied smoothness bound and preserves these guarantees with an
explicit count of all trial evaluations. Finally, a restart controller
retains the inverse-square global rate and gives an accelerated local
dependence on the Hessian condition number, without knowing it.
These extensions share the same explicit reduced model; the proofs
separate accepted steps, trial evaluations, and the finite transition
to the local regime. A finite relative-residual criterion preserves
these guarantees without further objective-oracle calls. All stated
three-direction variants use at most three HVPs per accepted step;
CaCuSESOP uses either one HVP for its accepted gradient trial or at most
five HVPs for its subspace fallback.
A local PL condition also gives linear convergence and finite
trajectory length without a nondegenerate or unique minimum.
For adaptive coefficients, this result assumes that the accepted
coefficients are eventually bounded away from zero; the algorithm
does not need to know that bound.

\clearpage
\appendix
\section{Proof of the vector-oracle lower bound}
\label{app:oracle-lower-bound}

We repeat the complete statement so that the oracle restrictions and
the constants are available together with the proof.

\begin{lowerboundrestated}
\OracleLowerBoundStatement
\end{lowerboundrestated}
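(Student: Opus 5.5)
The plan is to take $f$ to be a convex quadratic on the first $M+1$ coordinates plus a decoupled, smooth, convex, nonquadratic scalar term in the last coordinate, and to use linear-span invariance to confine every admissible algorithm to a Krylov subspace of the quadratic. The extremal spectrum and weights of Meinardus then give the lower bound on that subspace. Concretely, write $x=(u,w,\xi)\in\R^{M+1}\times\R\times\R$ and set
\[
 f(x)=\tfrac12u^TAu-b^Tu+g(w,\xi),
\]
where $g$ depends only on coordinates orthogonal to $b$ and the range of $A$. I will choose $g$ to satisfy $\nabla g(0)=0$ and $\nabla^2g(0)=0$, to be $C^\infty$, convex, coercive, with a unique minimizer at $0$, and with $0\preceq\nabla^2g\preceq3I$. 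Its Hessian should be exactly $2$-Lipschitz and its third derivative $32$-Lipschitz. A one-dimensional profile such as a suitably scaled smoothed $|\xi|^3$- or $\xi^4$-type function, glued to a quadratic tail, is the first candidate; the spare coordinate $w$ gives room to fix the constants. The matrix $A$ will be diagonal with spectrum in $[a,1]$, where $a\asymp(M+1)^{-2}$. This gives $L_0=1$ once I check that the part of $\mathcal C_0$ in the $g$-coordinates stays where $\nabla^2g\preceq I$. I will rescale $b$ so that $D_0=1$, which also requires checking that the $g$-directions of $\mathcal C_0$ do not exceed radius $1$.

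\textbf{Span invariance.} Let $\mathcal K_j=\Span\{b,Ab,\dots,A^{j-1}b\}\times\{0\}$. I will prove by induction on the number of calls that the span of returned vectors after $j$ calls lies in $\mathcal K_j$. For a query point $x\in\mathcal K_j$, the gradient is $(Au-b,0,0)\in\mathcal K_{j+1}$, because $\nabla g(0)=0$. For an HVP input $v\in\mathcal K_j$ at such an $x$, the product is $(Av_u,0,0)\in\mathcal K_{j+1}$, because $\nabla^2g(0)=0$ and the Hessian is block diagonal. Function values and scalar computations never create vectors, so they do not affect the span. Hence every output after $M$ calls has the form $u=p(A)b$ with $\deg p\le M-1$.

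\textbf{Reduction to a polynomial problem.} With $u^\star=A^{-1}b$, the residual is $u-u^\star=r(A)u^\star$ with $r(0)=1$ and $\deg r\le M$. Therefore
\[
 f(x)-f_\star=\tfrac12\sum_i\lambda_i\,r(\lambda_i)^2c_i^2,\qquad c_i=(u^\star)_i .
\]
It remains to lower-bound the minimum of this quantity over such $r$. Following Meinardus and Li, I will place the $M+1$ eigenvalues at the extremal points of the shifted Chebyshev polynomial on $[a,1]$. I will then choose the weights $\lambda_ic_i^2$ so that the weighted least-squares minimizer is the equioscillating polynomial $T_{M}\bigl((1+a-2\lambda)/(1-a)\bigr)/T_M\bigl((1+a)/(1-a)\bigr)$ (or Greenbaum's energy-norm analogue). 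The minimum value then equals $\tfrac12\|u^\star\|_A^2/T_M\bigl((1+a)/(1-a)\bigr)^2$. The final step is to choose $a$ so that $T_M\bigl((1+a)/(1-a)\bigr)\le\cosh(2)$, using $T_M(\cosh\theta)=\cosh(M\theta)$ and $\operatorname{arccosh}\bigl((1+a)/(1-a)\bigr)\approx2\sqrt a$. I will also normalize $\|u^\star\|_A^2\ge 1/(M+1)^2$ consistently with $D_0=1$; taking $\|u^\star\|=1$ and $\lambda_{\min}=a\asymp(M+1)^{-2}$ should deliver exactly $1/(2\cosh^2(2)(M+1)^2)$.

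\textbf{Main obstacle.} I expect the hardest step to be the exact extremal computation: verifying the Meinardus weights (Lagrange-interpolation / Gauss-quadrature identities) and making the constants $\cosh(2)$ and $(M+1)^2$ come out exactly, while keeping $D_0=L_0=1$. Checking that the nonquadratic term meets the stated Lipschitz constants exactly and leaves the level-set constants unchanged is routine but fiddly. I would first fix the weight normalization from Li's explicit formulas and then back-solve for $a$ and the scaling of $b$.
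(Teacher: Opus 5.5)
\textbf{Gap: the extra coordinate breaks $D_0=1$.} Your architecture matches the paper's: a diagonal quadratic with Meinardus' Chebyshev-extremal nodes and weights, a decoupled nonquadratic coordinate, and a Krylov span induction over gradient and HVP calls. The gap is in how you design the extra coordinates.

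First, the condition $\nabla^2g(0)=0$ is not needed for the span argument. An admissible HVP input has zero $g$-components and the Hessian is block diagonal, so the $g$-components of the output vanish whatever $\nabla^2g(0)$ is.

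Second, that condition conflicts with $D_0=1$. The $u$-slice of $\mathcal C_0$ alone gives $D_0\ge\|u^\star\|_A/\sqrt a$. Now take one $g$-coordinate $\xi$ and $u-u^\star=\rho e_{\min}$ on the boundary of $\mathcal C_0$. Then $\|x-x^\star\|^2=\|u^\star\|_A^2/a+\xi^2-2g(\xi)/a$. Since $g(\xi)=o(\xi^2)$, this exceeds $\|u^\star\|_A^2/a$ for small $\xi\ne0$. So $D_0=1$ forces $\|u^\star\|_A^2<a$ strictly. That contradicts your normalization $\|u^\star\|_A^2\ge(M+1)^{-2}$ at the natural choice $a=(M+1)^{-2}$, which is what makes $(1+\sqrt a)/(1-\sqrt a)=1+2/M$. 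A rescue would have to tune $a$ and $b$ jointly and prove a margin uniform in $M$; your plan contains no such step.

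Third, your alternative ``$\|u^\star\|=1$'' fails outright. All Meinardus weights are positive, so $\|u^\star\|_A^2>\lambda_{\min}\|u^\star\|^2$ and $D_0>1$. The correct normalization is $f(0)-f_\star=\lambda_{\min}/2$.

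Fourth, with $(u,w,\xi)$ your function lives on $\R^{M+3}$, not the $\R^{M+2}$ the statement requires, so the spare coordinate $w$ must go.

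\textbf{How the paper avoids this.} It makes $f$ exactly quadratic on $\mathcal C_0$ by taking $f(u,t)=q(u)+\tfrac12t^2+\phi(t-2)$. Here $\phi'''=2\eta$ for a $C^\infty$ bump $\eta$ supported in $[0,1]$, with $\eta(1/2)=1$ and $|\eta'|\le16$. Since $f(0)=\mu/2$ forces $|t|\le\sqrt\mu\le\tfrac12$ on $\mathcal C_0$, and $\phi(t-2)=0$ there, the sublevel set is the ellipsoid with matrix $\operatorname{diag}(A,1)$. Hence $L_0=1$ and $D_0=1$ are immediate. The third and fourth derivatives of the added term, with $\sup|\phi'''|=2$ and $|\phi''''|\le32$, are supported in $2\le t\le3$, far from $\mathcal C_0$.

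\textbf{The step you flag as the main obstacle.} You need not identify the least-squares minimizer. Take energy weights $\lambda_i(u^\star)_i^2=\mu w_i$ with $w_i=|\ell_i(0)|/W_M$. Every admissible $p$ satisfies $1=p(0)=\sum_i\ell_i(0)p(\lambda_i)$. Weighted Cauchy--Schwarz then gives $\sum_iw_ip(\lambda_i)^2\ge W_M^{-2}$. Because $\operatorname{sign}\ell_i(0)=(-1)^{M+i}$, you get $W_M=\sum_i|\ell_i(0)|=T_M\bigl((1+\mu)/(1-\mu)\bigr)$.

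Also use the exact identity $\operatorname{arccosh}\frac{1+a}{1-a}=\log\frac{1+\sqrt a}{1-\sqrt a}$ rather than $\approx2\sqrt a$. The approximation $2\sqrt a$ is a lower bound, so it points the wrong way for an upper bound on $T_M$. With $\sqrt a=1/(M+1)$ the exact expression equals $\log(1+2/M)\le2/M$, which gives $W_M\le\cosh2$.
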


The proof has four steps. We first choose a quadratic function whose
minimizer is difficult to approximate using a low-degree polynomial
of its Hessian. We then quantify that approximation error. Next we
add one coordinate to obtain a nonquadratic function with fixed
smoothness constants. Finally, we show that every permitted oracle
call reveals at most one further polynomial degree. This last step
also covers momentum and nonmonotone extrapolation.
The quadratic part is a normalized form of Meinardus' extremal CG
example~\cite{meinardus1963,li2008meinardus}. The sharp spectral
polynomial framework is classical~\cite{greenbaum1979}.
For the present oracle statement, we choose the smallest eigenvalue
as a function of the total call budget, normalize the level-set
constants, and append a smooth coordinate with a fixed nonzero
global Hessian Lipschitz constant. The final span argument explicitly
counts gradients and HVPs under the stated query restrictions.
We give all four steps to make this adaptation self-contained.

\begin{proof}[Proof of Theorem~\ref{thm:oracle-lower-bound}]
Fix an integer $M\ge1$. We construct one function that is hard for
every permitted algorithm at this budget.

\paragraph{Step 1: choose the spectrum and the quadratic minimizer.}
Put $\mu=(M+1)^{-2}$. Choose $M+1$ distinct
positive numbers
\[
 \lambda_i=\frac{1+\mu}{2}
       +\frac{1-\mu}{2}\cos\frac{i\pi}{M},
 \qquad i=0,\ldots,M.
\]
They decrease from $\lambda_0=1$ to $\lambda_M=\mu$. Define
their Lagrange interpolation polynomials and positive weights by
\[
 \ell_i(t)=\prod_{\substack{0\le j\le M\\j\ne i}}
             \frac{t-\lambda_j}{\lambda_i-\lambda_j},
 \qquad W_M=\sum_{i=0}^M|\ell_i(0)|,
 \qquad w_i=\frac{|\ell_i(0)|}{W_M}.
\]
No node is zero, so every $w_i$ is positive and $\sum_{i=0}^M w_i=1$.
Set
\[
 A=\operatorname{diag}(\lambda_0,\ldots,\lambda_M),
 \qquad (u^\star)_i=\sqrt{\frac{\mu w_i}{\lambda_i}},
 \qquad q(u)=\frac12(u-u^\star)^TA(u-u^\star).
\]
Here $(u^\star)_i$ denotes a coordinate of the minimizer, not an
iterate. The smallest and largest eigenvalues of $A$ are $\mu$
and $1$, respectively, and
\[
 q(u^\star)=0,\qquad q(0)=\frac\mu2.
\]

The connection to the classical example can be seen directly in the
weights. At the chosen Chebyshev extrema, $|\ell_i(0)|$ is
proportional to $1/\lambda_i$ at the two endpoints and to
$2/\lambda_i$ at the interior nodes. Hence $Au^\star$ is a scalar
multiple of the right-hand side in Meinardus' example as described
by Li~\cite{li2008meinardus}. The present normalization will give
level-set radius one.

\paragraph{Step 2: bound the best point in the available polynomial space.}
Define the nested spaces
\[
 \mathcal K_0=\{0\},\qquad
 \mathcal K_j=\Span\{Au^\star,A^2u^\star,\ldots,A^ju^\star\}
 \quad(j\ge1).
\]
For every $u\in\mathcal K_M$, there is a polynomial $p$ of
degree at most $M$ such that
\begin{equation}\label{eq:lower-polynomial-error}
 u-u^\star=-p(A)u^\star,\qquad p(0)=1.
\end{equation}
Indeed, writing $u=\sum_{j=1}^M a_jA^ju^\star$ gives
$p(t)=1-\sum_{j=1}^M a_jt^j$.
Interpolation of $p$ at zero and weighted Cauchy--Schwarz yield
\[
 \begin{aligned}
 1&=\left(\sum_{i=0}^M\ell_i(0)p(\lambda_i)\right)^2\\
  &\le\left(\sum_{i=0}^M\frac{\ell_i(0)^2}{w_i}\right)
        \left(\sum_{i=0}^M w_ip(\lambda_i)^2\right)
   =W_M^2\sum_{i=0}^M w_ip(\lambda_i)^2.
 \end{aligned}
\]
Consequently, \eqref{eq:lower-polynomial-error} and the choice
of $u^\star$ imply
\[
 q(u)=\frac\mu2\sum_{i=0}^M w_ip(\lambda_i)^2
      \ge\frac{\mu}{2W_M^2}.
\]

It remains to bound $W_M$ by a constant independent of $M$.
Let $\mathsf T_M$ be the Chebyshev polynomial defined by
$\mathsf T_M(\cos\theta)=\cos(M\theta)$. At the chosen nodes,
\[
 \mathsf T_M\left(\frac{2\lambda_i-1-\mu}{1-\mu}\right)=(-1)^i.
\]
The decreasing order of the nodes gives
$\operatorname{sign}\ell_i(0)=(-1)^{M+i}$: the numerator has
$M$ negative factors and the denominator has $i$ negative factors.
Interpolating the displayed degree-$M$ polynomial at zero and using
its parity therefore gives
\[
 \begin{aligned}
 W_M
 &=\mathsf T_M\left(\frac{1+\mu}{1-\mu}\right)\\
 &=\cosh\left(M\log\frac{1+\sqrt\mu}{1-\sqrt\mu}\right)
  =\cosh\left(M\log(1+2/M)\right)
 \le\cosh2.
 \end{aligned}
\]
We used $\log(1+a)\le a$ in the last step. Thus every point in
$\mathcal K_M$ satisfies
\begin{equation}\label{eq:lower-quadratic-bound}
 q(u)\ge\frac{1}{2\cosh^2(2)(M+1)^2}.
\end{equation}
This bound concerns the entire space, regardless of how a point
inside it is selected.

\paragraph{Step 3: make the objective nonquadratic with fixed constants.}
A quadratic has constant Hessian. To obtain a genuinely nonquadratic
example whose smallest global Hessian Lipschitz constant is exactly
two, define
\[
 \eta(t)=
 \begin{cases}
  \exp\!\left(4-\dfrac1{t(1-t)}\right),&0<t<1,\\
  0,&\text{otherwise},
 \end{cases}
 \qquad
 \phi(t)=\int_0^t(t-v)^2\eta(v)\,dv.
\]
The function $\eta$ is smooth, with every derivative vanishing at
$0$ and $1$. It satisfies $0\le\eta\le1$ and $\eta(1/2)=1$.
Differentiating the integral gives
\[
 \phi''(t)=2\int_0^t\eta(v)\,dv,\qquad
 \phi'''(t)=2\eta(t),\qquad
 0\le\phi''(t)<2.
\]
In particular, $\phi$ is smooth, convex, and nonnegative, and it and
all its derivatives vanish on $(-\infty,0]$. For $0<t<1$, set
$a=t(1-t)$. Then
\[
 |\eta'(t)|\le e^{4-1/a}a^{-2}\le16.
\]
For the second inequality, put $y=1/a\ge4$ and use that
$e^{4-y}y^2$ decreases on $[4,\infty)$ and equals $16$ at $4$.
Outside $[0,1]$ the derivative vanishes. Hence
$\max|\phi'''|=2$ and $|\phi''''|\le32$.

Append one scalar coordinate $t$ to $u\in\R^{M+1}$ and define
\[
 f(u,t)=q(u)+\frac12t^2+\phi(t-2).
\]
This function is convex, smooth, nonquadratic, and coercive. Its
unique minimizer is $x^\star=(u^\star,0)$, with $f_\star=0$.
Its Hessian is the block diagonal matrix
\[
 \nabla^2f(u,t)=
 \begin{pmatrix}
 A&0\\0&1+\phi''(t-2)
 \end{pmatrix},
\]
so $0\preceq\nabla^2f\preceq3I$. Its only varying Hessian entry
has derivative $\phi'''(t-2)$. Thus the global Hessian Lipschitz
constant is at most two, and it cannot be smaller because
$\phi'''(1/2)=2$. The only nonzero component of the third derivative
is $\phi'''(t-2)$ in the added coordinate. Its derivative is bounded
by $32$, proving the stated Lipschitz bound in the induced multilinear
norm. All these constants are independent of $M$.

Since $f(0,0)=\mu/2$ and $\phi\ge0$, a point in the initial
sublevel must satisfy $|t|\le\sqrt\mu\le1/2$. At every such point
$\phi(t-2)=0$. It follows in both directions that
\[
 \mathcal C_0
 =\{(u,t):(u-u^\star)^TA(u-u^\star)+t^2\le\mu\}.
\]
The Hessian is constant on this ellipsoid, with largest eigenvalue
one, so $L_0=1$. The smallest eigenvalue of its defining matrix is
$\mu$, which gives radius one about $(u^\star,0)$. That radius is
attained along an eigenvector of $A$ for $\mu$, so $D_0=1$.

\paragraph{Step 4: count information, including momentum and inner calls.}
We now prove by induction that, after $j$ vector-oracle calls, every
returned vector lies in $\mathcal K_j\times\{0\}$. Before the first
call the available space is $\{0\}$. Suppose the claim holds after
$j$ calls. Every permitted query point then has the form $(u,0)$
with $u\in\mathcal K_j$, and every permitted HVP input has the
form $(v,0)$ with $v\in\mathcal K_j$. On this entire subspace,
\[
 \nabla f(u,0)=(Au-Au^\star,0),\qquad
 \nabla^2f(u,0)(v,0)=(Av,0).
\]
Each returned vector belongs to $\mathcal K_{j+1}\times\{0\}$.
The earlier vectors remain there as well, since the spaces are
nested. Arbitrary scalar-dependent combinations, exact minimization
inside the available span, and function-value queries cannot enlarge
that span. This proves the induction.

In particular, after at most $M$ vector-oracle calls every output
has the form $(u,0)$ with $u\in\mathcal K_M$. Since
$f(u,0)=q(u)$, inequality~\eqref{eq:lower-quadratic-bound} proves
the claimed lower bound for every such output. The construction
depends only on $M$, not on the algorithm.
The added coordinate remains zero even when query points leave
$\mathcal C_0$, so neither monotonicity nor containment of the queries
in the initial sublevel was used.

To spell out the role of momentum, an extrapolate
$y^k=x^k+\beta_k(x^k-x^{k-1})$ remains in the span containing
the two iterates, for every adaptively chosen scalar $\beta_k$.
Evaluating a gradient or an HVP there is already one of the calls
covered by the induction. Retaining all earlier vectors or solving
the restricted model more accurately does not change this argument.

Finally, suppose an algorithm uses at most $b+ck$ vector-oracle
calls through outer iteration $k$, with fixed nonnegative constants
$b,c$. Apply the construction with
$M=\max\{1,\lceil b+ck\rceil\}$. The resulting worst-case gap is
at least a constant multiple of $(k+1)^{-2}$, with the constant
depending only on $b,c$.
This contradicts a uniform bound $C/(k+1)^3$ whose constant and
starting index are independent of dimension and depend only on the
fixed displayed problem bounds. The same comparison rules out any
uniform $o(k^{-2})$ bound in this oracle model.
It concerns a family of problems of increasing dimension, so it
does not preclude faster local convergence on a fixed problem.
\end{proof}

\section{Backtracking guarantees and trial counts}
\label{app:backtracking}

This appendix supplies the analysis deferred from
Section~\ref{sec:adaptive}. We first prove that the two-certificate
rule terminates and preserves the one-step estimates. We then
transfer the global and local guarantees and count the extra trials.
The coefficient is retained across restarts throughout.
We use the sufficiently small initial estimate specified in
Theorem~\ref{thm:global-bt} throughout this appendix.

\subsection{Finite termination and a uniform coefficient bound}

We first show that the trial loop always terminates. This also gives
a uniform upper bound on the accepted coefficients, which will replace
the fixed coefficient in the convergence estimates.

\begin{theorem}[Finite backtracking and bounded coefficients]
\label{thm:global-bt}
Under Assumptions~\ref{ass:convex}--\ref{ass:hessian}, take
$0<H_{\mathrm{init}}\le2H$. The adaptive rule is well defined for
any search spaces containing the current gradient.
Every trial loop is finite, all accepted iterates belong to
$\mathcal C_0$, and
\begin{equation}\label{eq:global-Mbound}
 0<H_k\le H_{\max}:=2H.
\end{equation}
\end{theorem}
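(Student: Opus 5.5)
The proof is an induction on the accepted step index, carried out alongside an inner analysis of a single trial loop. The induction hypothesis at step $k$ is that $x^k\in\mathcal C_0$ and $0<H_{k-1}\le2H$. The base case is exactly the assumption $H_{-1}=H_{\mathrm{init}}\le2H$ together with $x^0\in\mathcal C_0$. If $\nabla f(x^k)=0$ the method stops, so assume $x^k$ is nonstationary. Since $B^k$ is cached, each trial with coefficient $\widehat H>0$ is a well-defined cubic subproblem over a space containing the gradient. Proposition~\ref{prop:oracles} and the first part of Lemma~\ref{lem:invariance} show that it has a unique nonzero minimizer $s(\widehat H)$ satisfying \eqref{eq:model-stationarity} with $H$ replaced by $\widehat H$.

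\textbf{Key step: every trial with $\widehat H\ge H$ is accepted.} For such a trial, note that $\widehat H\ge H$ means the coefficient $\widehat H$ still dominates the Lipschitz bound of Assumption~\ref{ass:hessian}. The first-exit argument of Lemma~\ref{lem:invariance} therefore applies verbatim with $H$ replaced by $\widehat H$. It uses only \eqref{eq:stationarity-along-step} with $\widehat H$, the strict negativity of the scaled cubic expression, and the Taylor upper bound \eqref{eq:taylor-upper} with constant $H\le\widehat H$. Hence $[x^k,x^k+s]\subseteq\mathcal C_0$. On this segment, \eqref{eq:taylor-upper} with $H\le\widehat H$ yields the function certificate \eqref{eq:ftest}, and \eqref{eq:taylor-gradient} yields the gradient certificate \eqref{eq:gtest}.

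\textbf{Termination and the upper bound $H_k\le2H$.} Trials start at $H_{k-1}/2$ and double, so the trial coefficients form a dyadic sequence. Two cases arise.
\begin{itemize}
\item If acceptance happens at the first trial, then $H_k=H_{k-1}/2\le H<2H$.
\item Otherwise acceptance occurs no later than the first trial value that is at least $H$. That value is less than $2H$ because its predecessor, half of it, was below $H$.
\end{itemize}
In both cases the loop is finite, with at most $1+\lceil\log_2(4H/H_{k-1})\rceil$ trials, and $H_k\le2H$. Positivity of $H_k$ is immediate, since every trial coefficient is positive.

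\textbf{Invariance of the accepted step.} The remaining point is that the accepted step lies in $\mathcal C_0$ even when it was accepted with $H_k<H$, where the first-exit argument is not available. This is settled by the function certificate itself. By \eqref{eq:stationarity-along-step} with $\widehat H=H_k$, the right-hand side of \eqref{eq:ftest} equals $f(x^k)-\tfrac12 s^T\nabla^2f(x^k)s-\tfrac{2H_k}{3}\|s\|^3<f(x^k)\le f(x^0)$. So $x^{k+1}\in\mathcal C_0$ and $f(x^{k+1})<f(x^k)$, which closes the induction.

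The main subtlety is that only endpoint membership, not segment membership, is guaranteed for $H_k<H$. The theorem asks only for the accepted iterates to lie in $\mathcal C_0$, so this suffices; later one-step bounds would then have to rely on the certificates rather than on Taylor bounds along the segment. The only hypothesis used on the search space is that it contains the current gradient, which covers every variant, including restarts with the retained coefficient.
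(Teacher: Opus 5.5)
Your proof is correct and follows the paper's argument. Trials with $\widehat H\ge H$ pass both certificates via the first-exit argument of Lemma~\ref{lem:invariance}, because $\widehat H$ also satisfies Assumption~\ref{ass:hessian}; dyadic doubling then gives termination, and $H_k<2H$ follows from the rejected predecessor; finally, the function certificate together with the stationarity identity gives strict decrease and hence $x^{k+1}\in\mathcal C_0$. One small correction to your closing remark: $\mathcal C_0$ is convex and contains both $x^k$ and $x^{k+1}$, so the whole accepted segment lies in $\mathcal C_0$ even when $H_k<H$, and Taylor bounds with the true constant $H$ remain available on it.
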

\begin{proof}
Each trial coefficient is positive, so its cubic model has a unique
minimizer. Any trial with $\widehat H\ge H$ passes both certificates:
Lemma~\ref{lem:invariance} first places its segment in $\mathcal C_0$,
and the two Taylor estimates then apply there. Doubling therefore
terminates. At an accepted step, model minimality and the function
certificate give $f(x^{k+1})\le f(x^k)$.

If the first trial passes, $H_k=H_{k-1}/2$.
Otherwise the preceding rejected coefficient was less than $H$,
so $H_k<2H$. Induction proves \eqref{eq:global-Mbound}.
Rejected trials below $H$ may leave $\mathcal C_0$; no estimate
outside that set is used.
\end{proof}

The initial coefficient is a starting estimate, not a supplied
Hessian Lipschitz bound. As usual in backtracking analyses, we take
it sufficiently small; $H$ is used only in the proof.

All trials at the same iterate reuse the same three HVPs.
Proposition~\ref{prop:counts} below counts the extra value and
gradient evaluations.

\subsection{Global and local guarantees for accepted steps}

We now recover the two decrease estimates used in the global proof.
The acceptance tests give them with the actual coefficient $H_k$;
its uniform upper bound then supplies constants valid for every step.
Substituting $H_{\max}=2H$ in the bounds below gives the estimates
stated in the main text directly in terms of $H$. These same numerical
constants also cover fixed-coefficient steps, since $H\le2H$.
The convergence orders and local accelerated exponent are unchanged;
no additional parameter is supplied to any algorithm.

\begin{theorem}[Global rate with adaptive regularization]
\label{thm:ag-rate}
Under Assumptions~\ref{ass:convex}--\ref{ass:hessian},
Algorithm~\ref{alg:adaptive-cubic} without restarts satisfies
\[
 f(x^K)-f_\star\le C(L_0D_0^2+H_{\max}D_0^3)
       \frac{\sqrt{1+\log(K+1)}}{(K+1)^2}\qquad(K\ge0)
\]
for a universal constant $C$, with $H_{\max}$ from
\eqref{eq:global-Mbound}. No positive lower bound on $H_k$ is required.
\end{theorem}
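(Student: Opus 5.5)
The plan is to rerun the proof of Theorem~\ref{thm:main} unchanged, after re-deriving the two one-step budgets \eqref{eq:global-two-budgets} for accepted adaptive steps with constants depending only on $H_{\max}$ and never on a lower bound for $H_k$. The global argument (Lemmas~\ref{lem:reserve}, \ref{lem:prefix}, \ref{lem:window} and the endpoint contraction) uses only four ingredients: monotonicity and invariance in $\mathcal C_0$; the corrected-gradient orthogonalities; the gradient budget $\|\nabla f(x^k)\|^2\le C_{\mathrm{grad}}d_k$; and the defect budget $\|e^{k+1}\|\le c_{\mathrm{def}}d_k^{2/3}$. The weights, aggregate and correction spaces are the same as in Algorithm~\ref{alg:cubic}, so the orthogonalities in \eqref{eq:orthogonalities} hold verbatim. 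Monotonicity and invariance come from Theorem~\ref{thm:global-bt}.

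\textbf{Gradient budget.} An accepted step minimizes the cubic model with coefficient $H_k$. Taking the inner product of its stationarity condition with $s^k$ gives \eqref{eq:stationarity-along-step}. Combining this with the function certificate \eqref{eq:ftest} gives
\[
 d_k\ge\tfrac12(s^k)^T\nabla^2f(x^k)s^k+\tfrac{2H_k}{3}\|s^k\|^3.
\]
For the gradient bound, I compare the model value at $s^k$ with the value at $-\nabla f(x^k)/\mathcal L_{\max}$, where $\mathcal L_{\max}=L_0+\sqrt{2H_{\max}L_0D_0}$. The function certificate transfers this model comparison to $f$, exactly as in Lemma~\ref{lem:gradient-decrease}. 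Since $H_k\le H_{\max}$, the cubic term in the comparison is at most $H_{\max}\|\nabla f\|^3/(3\mathcal L_{\max}^3)$, so the same algebra yields $\|\nabla f(x^k)\|^2\le3\mathcal L_{\max}d_k$. Here only the upper bound on $H_k$ is used.

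\textbf{Defect budget, the main obstacle.} Subtracting the projected stationarity condition and using the gradient certificate \eqref{eq:gtest} gives
\[
 \|\Pi_{\mathcal S^k}\nabla f(x^{k+1})\|\le2H_k\|s^k\|^2 .
\]
The decrease gives $\|s^k\|^3\le3d_k/(2H_k)$. Hence
\[
 \|e^{k+1}\|\le2H_k\bigl(3d_k/(2H_k)\bigr)^{2/3}=(18H_k)^{1/3}d_k^{2/3}\le(18H_{\max})^{1/3}d_k^{2/3}.
\]
This is the delicate point, because a small $H_k$ might seem to spoil the bound. It does not: the exponent of $H_k$ in the combined estimate is $+1/3$, so small coefficients only help, and no lower bound is needed.

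With $H$ replaced by $H_{\max}$ in both budgets, every constant in $C_{\mathrm{global}}$ has the same form with $H_{\max}$. The proof of Theorem~\ref{thm:main} then gives \eqref{eq:explicit-rate}. Absorbing constants via $\mathcal L_{\max}D_0^2\le\tfrac32L_0D_0^2+H_{\max}D_0^3$ gives the claimed bound. A stationary start and finite termination are covered by constant extension.
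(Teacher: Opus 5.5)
Your proposal is correct and matches the paper's proof essentially step for step. You re-derive the gradient budget by comparing the model at $-\nabla f(x^k)/\mathcal L$, with $\mathcal L=L_0+\sqrt{2H_{\max}L_0D_0}$, using only $H_k\le H_{\max}$; you obtain $\|e^{k+1}\|\le(18H_k)^{1/3}d_k^{2/3}\le(18H_{\max})^{1/3}d_k^{2/3}$ from the gradient certificate with no lower bound on $H_k$; and you then rerun the fixed-coefficient global argument with $H$ replaced by $H_{\max}$, exactly as the paper does (the step $\|e^{k+1}\|\le\|\Pi_{\mathcal S^k}\nabla f(x^{k+1})\|$ uses $\mathcal V^{k+1}\subseteq\mathcal S^k$, which holds as in the fixed case).
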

\begin{proof}
The stationary initial point is immediate. Otherwise define
\begin{equation}\label{eq:adaptive-decrease-constant}
 \mathcal L=L_0+\sqrt{2H_{\max}L_0D_0}>0.
\end{equation}
Stationarity of the accepted model, followed by its function certificate,
gives
\begin{equation}\label{eq:adaptive-two-decreases}
 f(x^k)-f(x^{k+1})
 \ge\tfrac12(s^k)^T\nabla^2f(x^k)s^k+\tfrac{2H_k}{3}\|s^k\|^3.
\end{equation}
Since $H_k\le H_{\max}$ and $\|\nabla f(x^k)\|\le L_0D_0$,
the gradient comparison in Lemma~\ref{lem:gradient-decrease}
applies to the accepted model with the present $\mathcal L$.
It gives
\[
 \|\nabla f(x^k)\|^2
 \le3\mathcal L\bigl(f(x^k)-f(x^{k+1})\bigr).
\]

The projected stationarity equation and \eqref{eq:gtest} also give
$\|\Pi_{\mathcal S^k}\nabla f(x^{k+1})\|\le2H_k\|s^k\|^2$.
Since the correction space is contained in $\mathcal S^k$,
\begin{align*}
 \|e^{k+1}\|
 &\le2H_k\|s^k\|^2\\
 &\le(18H_k)^{1/3}\bigl(f(x^k)-f(x^{k+1})\bigr)^{2/3}\\
 &\le(18H_{\max})^{1/3}
                   \bigl(f(x^k)-f(x^{k+1})\bigr)^{2/3}.
\end{align*}
The second inequality uses \eqref{eq:adaptive-two-decreases} with
the actual $H_k>0$ and requires no uniform positive lower bound.

The gradient and correction estimates are precisely
\eqref{eq:algorithm-gradient}--\eqref{eq:defect-budget} with $H$ replaced by $H_{\max}$ and $\mathcal L$
set to \eqref{eq:adaptive-decrease-constant}.
The weights and all aggregate orthogonalities are unchanged.
Consequently, the interval sums, prefix estimates, and weight-window
argument of Section~\ref{sec:global} apply with these replacements.
Using
$\mathcal L D_0^2\le\tfrac32L_0D_0^2+H_{\max}D_0^3$
in \eqref{eq:explicit-rate} proves the stated bound. Constant
extension covers finite termination.
\end{proof}

The deterministic epochs use only the same one-step estimates.
They therefore remove the logarithmic factor also when the coefficient
is chosen by backtracking.

\begin{corollary}[Adaptive deterministic epochs]
\label{cor:adaptive-restarts}
Use the epoch schedule of Section~\ref{sec:restarted-rate} with
the adaptive steps above, retaining the accepted coefficient across
epoch boundaries. Under Assumptions~\ref{ass:convex}--\ref{ass:hessian},
\[
 f(x^K)-f_\star\le
 \frac{81D_0^2}{(K+1)^2}
 \left(\frac32\bigl(L_0+\sqrt{2H_{\max}L_0D_0}\bigr)
                   +\frac{1024}{3}H_{\max}D_0\right).
\]
The trial and oracle bounds of Proposition~\ref{prop:counts} hold,
including at most three HVPs per accepted step.
The method uses no supplied smoothness or strong-convexity constant.
\end{corollary}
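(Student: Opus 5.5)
The plan is to reduce the corollary to Lemma~\ref{lem:epoch-bound} and the induction in the proof of Theorem~\ref{thm:restarted-rate}. Those arguments use only three ingredients. The first is the pair of one-step budgets \eqref{eq:global-two-budgets}: the gradient bound with some coefficient $\mathcal L$ and the defect bound with some coefficient $(18\bar H)^{1/3}$. The second is the weight recurrence. The third is the containment $\Span\{y^{k+1}-y^0,z^k\}\subseteq\mathcal S^k$ together with the resulting corrected-gradient orthogonalities. So I would check these with $\bar H=H_{\max}=2H$ and $\mathcal L=L_0+\sqrt{2H_{\max}L_0D_0}$, and then substitute them into the existing proofs.

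The first step is to apply Theorem~\ref{thm:global-bt} with the coefficient retained across epoch boundaries. Its proof uses only gradient inclusion and $H_{\mathrm{init}}\le 2H$, and the retained value $H_{k-1}$ plays the role of the previous coefficient regardless of whether a restart intervened. This gives finite backtracking, invariance in $\mathcal C_0$, monotonicity, and $H_k\le H_{\max}$ along the concatenated trajectory.

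The second step repeats the proof of Theorem~\ref{thm:ag-rate} verbatim, since that argument is stated for arbitrary accepted steps with gradient inclusion. It supplies the decrease \eqref{eq:adaptive-two-decreases}, the gradient budget with $3\mathcal L$, and $\|e^{k+1}\|\le(18H_{\max})^{1/3}d_k^{2/3}$. The defect bound uses only the actual $H_k>0$ and the gradient certificate \eqref{eq:gtest}, with no lower bound on $H_k$.

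The third step substitutes these budgets into Lemma~\ref{lem:epoch-bound}. I would check that the constant produced by \eqref{eq:weighted-young} becomes $\frac{4(D_0(18H_{\max})^{1/3}\omega_i)^3}{27(\omega_{i-1}^2/2)^2}=\frac{32}{3}H_{\max}D_0^3\,\omega_i^3/\omega_{i-1}^4$. This gives \eqref{eq:completed-epoch} with $H$ replaced by $H_{\max}$ everywhere, including inside $\mathcal L$. The deterministic-epoch induction then goes through unchanged. Its starting estimate is $f(x^0)-f_\star\le L_0D_0^2/2$, which is at most the new numerator over $81$. Its index counting gives $K+1\le 9t$, which yields the displayed bound.

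The fourth step handles the oracle claims. These follow from Proposition~\ref{prop:counts} because each epoch step is an ordinary adaptive step. The HVPs are cached across trials, and restarts reuse the stored gradient. The only point needing care is that the adaptive coefficient sequence crosses epoch boundaries. I expect this to cause no difficulty, since neither the termination bound nor the amortized trial count depends on the aggregate or the epoch start. If the amortized argument in Proposition~\ref{prop:counts} telescopes $\log_2 H_k$, that telescoping is over the global step index and is therefore insensitive to restarts.
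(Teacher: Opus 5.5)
Your proposal is correct and follows the paper's own proof. The paper takes the accepted-step gradient and correction budgets of Theorem~\ref{thm:ag-rate}, with $H$ replaced by $H_{\max}$ and $\mathcal L=L_0+\sqrt{2H_{\max}L_0D_0}$, substitutes them into Lemma~\ref{lem:epoch-bound} and the induction of Theorem~\ref{thm:restarted-rate}, and notes that retaining the coefficient across epoch boundaries leaves Theorem~\ref{thm:global-bt} and the telescoping count of Proposition~\ref{prop:counts} intact. Your checks of the Young constant $\tfrac{32}{3}H_{\max}D_0^3\,\omega_i^3/\omega_{i-1}^4$, the starting estimate, and $K+1\le 9t$ are all right and just spell out what the paper's short proof leaves implicit.
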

\begin{proof}
The accepted-step gradient and correction bounds in
Theorem~\ref{thm:ag-rate} are the fixed-coefficient bounds with
$H$ replaced by $H_{\max}$ and
$\mathcal L=L_0+\sqrt{2H_{\max}L_0D_0}$.
Substitute them into the epoch estimate and induction in
Theorem~\ref{thm:restarted-rate}. This gives the stated numerator
and the same three-HVP bound. Retaining $H_k$ across boundaries
preserves the coefficient bound and the count in
Proposition~\ref{prop:counts}.
\end{proof}

Finally, adaptation preserves automatic local linear convergence.
The proof needs bounded regularization coefficients, not a fixed one;
this observation will also be used for the restart controller below.

\begin{proposition}[Automatic local convergence with adaptation]
\label{prop:adaptive-local}
Under Assumptions~\ref{ass:convex}--\ref{ass:hessian} and
\ref{ass:local}, the adaptive method converges to $x^\star$ and
satisfies all conclusions of Theorem~\ref{thm:local}. This holds for
any search spaces containing $\nabla f(x^k)$ and for any changes to the epoch's
starting point or aggregate between accepted steps. The norm and local condition number are those in
Section~\ref{sec:local}; no local curvature parameter is supplied.
\end{proposition}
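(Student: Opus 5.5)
The plan is to repeat the two local arguments of Section~\ref{sec:local} with the fixed coefficient $H$ replaced by the accepted coefficients $H_k$. The only property of $H$ those proofs use is that it is a bounded positive number. By Theorem~\ref{thm:global-bt}, every accepted coefficient satisfies $0<H_k\le H_{\max}=2H$, so the constants can be taken uniform in $k$.

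\textbf{Convergence of the iterates.} First I obtain $f(x^k)\to f_\star$. If the method runs without restarts, this follows from Theorem~\ref{thm:ag-rate}. For arbitrary changes of the epoch start or aggregate, I instead use the gradient-decrease bound derived in that proof, namely $\|\nabla f(x^k)\|^2\le 3\mathcal L\,(f(x^k)-f(x^{k+1}))$. This bound needs only gradient inclusion and $H_k\le H_{\max}$, so summing it gives $\sum_k\|\nabla f(x^k)\|^2<\infty$. All accumulation points are therefore stationary, hence minimizers by convexity. The argument of Lemma~\ref{lem:local-entry} then applies unchanged: uniqueness of $x^\star$, compactness of $\mathcal C_0$ and monotonicity give $x^k\to x^\star$, and eventually $\nabla^2 f(x^k)\succeq\tfrac12\mu_\star I$.

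\textbf{The local projection decomposition.} Next I redo Lemma~\ref{lem:local-projection} with the stationarity equation $\Pi_{\mathcal S^k}(\nabla f(x^k)+\nabla^2 f(x^k)s^k+H_k\|s^k\|s^k)=0$. The step bound $\|s^k\|\le 2\|\nabla f(x^k)\|/\mu_\star$ does not involve $H_k$. The remainder $r^k$ now contains $H_k\|s^k\|s^k$, and its norm is at most $H\|u^k\|^2+2H\|u^k\|\|s^k\|+H_{\max}\|s^k\|^2$. The Taylor bounds on the segment $[x^\star,x^k]$ use the true Lipschitz constant from Assumption~\ref{ass:hessian}, not the accepted $H_k$. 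This gives a constant $C_1$ that depends on $H_{\max}$ but is independent of $k$, of the search space, and of the epoch data. The rest of the proof of Theorem~\ref{thm:local} uses only gradient inclusion: approximate $u^k$ by $2\nabla f(x^k)/(L_\star+\mu_\star)$ and take the spectral bound. The same estimate \eqref{eq:local-rate} follows, together with the gap-ratio limsup.

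\textbf{Main obstacle.} The delicate point is making sure no step relies on a fixed $H$ or on the epoch structure. In particular, $H_k$ may become very small, and this must not matter. I expect it does not: the local argument never divides by $H_k$, only the upper bound $H_{\max}$ enters $C_1$, and the rejected trials, which may leave $\mathcal C_0$, play no role because only accepted steps are analysed.
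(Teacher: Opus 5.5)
Your proposal is correct and matches the paper's proof essentially step for step. You obtain $x^k\to x^\star$ by summing the gradient-decrease bound, which holds under $H_k\le H_{\max}$ and gradient inclusion, so epoch changes are irrelevant. You then rerun Lemma~\ref{lem:local-projection} with the remainder containing $H_k\|s^k\|s^k$ bounded through $H_{\max}$, while the Taylor terms on $[x^\star,x^k]$ keep the true $H$, and you reuse the spectral gradient comparison and the Taylor expansion at $x^\star$ from Theorem~\ref{thm:local}.
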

\begin{proof}
The gradient decrease bound gives
$\sum_{k=0}^\infty\|\nabla f(x^k)\|^2<\infty$.
All accumulation points therefore minimize $f$. The minimizer is
unique by Lemma~\ref{lem:local-entry}; compactness gives
$x^k\to x^\star$, regardless of changes to the epoch's starting point.

For large $k$, the local Hessian lower bound and model stationarity
give $\|s^k\|\le2\|\nabla f(x^k)\|/\mu_\star$.
In Lemma~\ref{lem:local-projection}, write $u^k=x^k-x^\star$
and replace its remainder by
\[
 r^k=\nabla^2f(x^\star)u^k-\nabla f(x^k)
       +[\nabla^2f(x^\star)-\nabla^2f(x^k)]s^k-H_k\|s^k\|s^k.
\]
Because $H_k\le H_{\max}$, its norm is still
$O(\|x^k-x^\star\|_\star^2)$. The same projection decomposition
and gradient comparison in Theorem~\ref{thm:local} consequently
prove its error recurrence. The objective-gap factor follows from
the same Taylor expansion at $x^\star$.
\end{proof}

The local PL result also extends to backtracking when the accepted
coefficients stay bounded away from zero. The lower bound is a
property of the accepted sequence, not an input to the algorithm.

\begin{corollary}[Local PL convergence and finite trajectory length with backtracking]
\label{cor:adaptive-pl}
Under Assumptions~\ref{ass:convex}--\ref{ass:hessian}, use the adaptive
algorithm. Suppose there exists $H_{\min}>0$ such that
$H_k\ge H_{\min}$ for all sufficiently large $k$.
If an accumulation point $\bar x$ satisfies the local PL condition
\eqref{eq:local-pl-condition}, all conclusions of
Theorem~\ref{thm:local-pl} hold with
\[
 q=1-\frac{2\mu}{3(L_0+\sqrt{2H_{\max}L_0D_0})}
\]
and with $(2H_{\min}/3)^{1/3}$ replacing $(2H/3)^{1/3}$ in the
finite-length and iterate-error bounds. Decrease $\mu$ if necessary
to ensure $0<q<1$. Thus the iterates eventually remain in the PL
neighborhood and converge R-linearly to $\bar x$, the gaps converge
Q-linearly, and the trajectory has finite length.
Restarts are permitted. Neither $H_{\min}$ nor the PL constant is
supplied to the algorithm. Finite termination is the immediate
alternative.
\end{corollary}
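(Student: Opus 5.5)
The plan is to reduce the corollary to Theorem~\ref{thm:local-pl}. Its proof explicitly uses only two per-step decrease bounds, with positive coefficients $C_{\mathrm{grad}}$ and $c_{\mathrm{step}}$, together with monotonicity, compactness of $\mathcal C_0$, and convexity. It does not use the identity of the search space or the epoch bookkeeping, so restarts need no separate treatment. The work is therefore to verify, for the adaptive accepted steps, the bounds
\[
 f(x^k)-f(x^{k+1})\ge\frac{\|\nabla f(x^k)\|^2}{3(L_0+\sqrt{2H_{\max}L_0D_0})},
 \qquad
 f(x^k)-f(x^{k+1})\ge\frac{2H_{\min}}{3}\|x^{k+1}-x^k\|^3,
\]
the second only for $k$ large. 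Then the argument of Theorem~\ref{thm:local-pl} is rerun verbatim with these coefficients.

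For the first bound, I invoke Theorem~\ref{thm:global-bt}, which gives termination, invariance in $\mathcal C_0$, and $H_k\le H_{\max}=2H$. The gradient estimate is then exactly the one derived in the proof of Theorem~\ref{thm:ag-rate}. It comes from model minimality against the competitor $-\nabla f(x^k)/\mathcal L$ with $\mathcal L=L_0+\sqrt{2H_{\max}L_0D_0}$, combined with the function certificate \eqref{eq:ftest}. That derivation uses only that the search space contains the current gradient, so it holds across epoch resets. For the second bound, \eqref{eq:adaptive-two-decreases} gives decrease at least $\tfrac{2H_k}{3}\|s^k\|^3\ge\tfrac{2H_{\min}}{3}\|s^k\|^3$ once $k$ is beyond the index from which $H_k\ge H_{\min}$.

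Next I redo the PL argument, taking $C_{\mathrm{grad}}=3\mathcal L$ and $c_{\mathrm{step}}=2H_{\min}/3$, so that $q=1-2\mu/C_{\mathrm{grad}}$ as stated, with $\mu$ decreased if necessary. Summability of $\|\nabla f(x^k)\|^2$ makes every accumulation point stationary, hence a minimizer, and gives $f(x^k)\to f_\star$. I then select $k_0$ along a subsequence converging to $\bar x$. It must be large enough that the $H_{\min}$ bound is active and that $\|x^{k_0}-\bar x\|$ plus the geometric tail sum is below $r$. The first-exit contradiction from Theorem~\ref{thm:local-pl} then keeps all later iterates in the PL ball. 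Q-linear decay of the gaps follows, and summing the step bound gives the finite length. The limit coincides with $\bar x$, and the R-linear rate holds.

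The only genuine obstacle is the second decrease bound, since the cubic coefficient $H_k$ can be arbitrarily small. This is exactly why $H_{\min}$ is assumed, and why the constant $(2H_{\min}/3)^{1/3}$ appears. I must check that $k_0$ is taken after the lower bound becomes active, so that every step used in the invariance sum obeys it. Finite termination at a zero gradient is handled by the constant extension already used in the paper.
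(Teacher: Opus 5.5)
Your proposal is correct and follows essentially the same route as the paper. You take the gradient-decrease bound with $C_{\mathrm{grad}}=3(L_0+\sqrt{2H_{\max}L_0D_0})$ from Theorem~\ref{thm:ag-rate} and the tail step bound with $c_{\mathrm{step}}=2H_{\min}/3$ from \eqref{eq:adaptive-two-decreases}, then rerun the proof of Theorem~\ref{thm:local-pl} with $k_0$ chosen beyond the index where $H_k\ge H_{\min}$. The only point the paper states that you leave implicit is that the finitely many steps before $k_0$ add a finite amount, so the whole trajectory, not just the tail, has finite length.
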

\begin{proof}
The gradient-decrease estimate in Theorem~\ref{thm:ag-rate} and
\eqref{eq:adaptive-two-decreases} give, for all sufficiently large $k$,
\[
 \begin{aligned}
 f(x^k)-f(x^{k+1})
 &\ge\frac{\|\nabla f(x^k)\|^2}
                {3(L_0+\sqrt{2H_{\max}L_0D_0})},\\
 f(x^k)-f(x^{k+1})
 &\ge\frac{2H_{\min}}3\|x^{k+1}-x^k\|^3.
 \end{aligned}
\]
Apply the proof of Theorem~\ref{thm:local-pl} to this tail, with
$C_{\mathrm{grad}}=3(L_0+\sqrt{2H_{\max}L_0D_0})$ and
$c_{\mathrm{step}}=2H_{\min}/3>0$. Its neighborhood-invariance,
finite-length, and iterate-convergence arguments use only these
two bounds. The finitely many earlier steps add a finite amount
to the trajectory length. Gradient inclusion and the acceptance
tests are preserved across restarts.
\end{proof}

\subsection{Trial and oracle counts}

This subsection counts the additional evaluations in
Algorithm~\ref{alg:adaptive-cubic}. Recall that $K$ counts accepted
steps and $H_k$ is the coefficient accepted at step $k$.
Let $b_k$ be the number of coefficient doublings before acceptance
at that step. Thus there are $b_k+1$ trials. These counters are used
only here, not in the algorithm or its convergence bounds.

The number of rejected trials at iteration $k$ satisfies
\begin{equation}\label{eq:global-rejections}
 b_k\le\max\left\{0,
 \left\lceil\log_2\frac{2H}{H_{k-1}}\right\rceil
 \right\}.
\end{equation}

Indeed, every trial with coefficient at least $H$ passes by
Theorem~\ref{thm:global-bt}, so the first such coefficient bounds
$b_k$. The accepted coefficient is
$H_k=2^{b_k-1}H_{k-1}$, regardless of whether
acceptance occurs earlier.

\begin{proposition}[Amortized trial and oracle counts]
\label{prop:counts}
Suppose $K\ge1$ iterations have been accepted and their coefficients
satisfy the bound $H_{\max}$ in \eqref{eq:global-Mbound}.
Then the total number of trials satisfies
\begin{equation}\label{eq:trial-count}
 \sum_{k=0}^{K-1}(b_k+1)
 \le 2K+O(1).
\end{equation}
The additive constant is independent of $K$ and accounts for the
initial coefficient.
With cached values at accepted points, there is at most one function
and one gradient evaluation per trial, in addition to one initial
evaluation of each. The stated method uses at most $3K$ HVPs.
These bounds also hold across epochs that retain the accepted coefficient.
\end{proposition}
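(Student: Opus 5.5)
The count rests on the standard telescoping argument for doubling backtracking. At iteration $k$ the first trial uses $H_{k-1}/2$, and each rejection doubles the coefficient. Hence the accepted coefficient is exactly $H_k=2^{b_k-1}H_{k-1}$, that is,
\[
 b_k=1+\log_2\frac{H_k}{H_{k-1}}.
\]
Summing over $k=0,\dots,K-1$ telescopes the logarithms:
\[
 \sum_{k=0}^{K-1}(b_k+1)=2K+\log_2\frac{H_{K-1}}{H_{-1}}.
\]
Using $H_{K-1}\le H_{\max}=2H$ from \eqref{eq:global-Mbound} and $H_{-1}=H_{\mathrm{init}}$, the last term is at most $\log_2(2H/H_{\mathrm{init}})$. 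This is the $O(1)$ constant in \eqref{eq:trial-count}. It is nonnegative under $H_{\mathrm{init}}\le2H$ and independent of $K$.

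I do not need a lower bound on the accepted coefficients. If some $H_k$ is tiny, the telescoped sum only becomes smaller, because the final ratio $H_{K-1}/H_{-1}$ is the only term that survives. The argument uses only the retention rule $\widehat H\gets H_{k-1}/2$. Epoch boundaries therefore change nothing as long as the accepted coefficient is carried across them, as the restart algorithms stipulate. The telescoping then runs over the concatenated sequence of accepted steps.

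For the oracle counts, each trial evaluates $f$ at the trial point once. It evaluates $\nabla f$ there only if the function certificate \eqref{eq:ftest} passes, so each trial costs at most one function value and one gradient. The accepted point's value and gradient are cached and reused as $f(x^{k+1})$ and $\nabla f(x^{k+1})$ at the next iteration. Therefore the only extra evaluations are the initial $f(x^0)$ and $\nabla f(x^0)$.

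For HVPs, the reduced Hessian $B^k$ is formed once per iteration from at most three columns, by Proposition~\ref{prop:oracles} with $r_k\le3$. It is then reused in every trial, because changing $\widehat H$ only changes the scalar secular solve. The certificate \eqref{eq:gtest} needs $\nabla^2f(x^k)s$ with $s=Q^ka$, which equals $(\nabla^2f(x^k)Q^k)a$ from the cached products. So no new HVP is needed per trial, and the total is at most $3K$.

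The only point requiring care is checking that the identity $H_k=2^{b_k-1}H_{k-1}$ holds exactly, including when acceptance occurs at the first trial ($b_k=0$). It also has to hold at iteration $0$ with the convention $H_{-1}=H_{\mathrm{init}}$. Beyond that, everything is bookkeeping.
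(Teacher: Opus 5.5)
Your proposal is correct and follows the paper's proof: the exact identity $H_k=2^{b_k-1}H_{k-1}$, telescoping of $\log_2 H_k$ together with $H_{K-1}\le 2H$, and reuse of the cached products $\nabla^2 f(x^k)Q^k$ and of accepted values and gradients across trials and across epoch boundaries. Your version also makes the additive constant explicit as $\log_2(2H/H_{\mathrm{init}})$, which the paper leaves as $O(1)$.
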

\begin{proof}
For every $k\ge0$,
\[
 H_k=2^{b_k-1}H_{k-1}.
\]
Taking logarithms and summing telescopes the coefficient updates.
The bound $H_{K-1}\le2H$ gives $\sum_{k=0}^{K-1}b_k\le K+O(1)$,
with a constant independent of $K$, which proves \eqref{eq:trial-count}.
The stated evaluation order uses
a trial gradient only if the function certificate passes. The accepted
gradient is retained for the next step and the aggregate update.
All reduced solves and products $\nabla^2f(x^k)s$ use the same
cached model columns, giving at most three HVPs per accepted step.
Retaining the coefficient at epoch boundaries preserves the telescoping
inequality and changes none of these counts.
\end{proof}

\section{Finite reduced-model solves: proofs and a stopping procedure}
\label{app:finite-solves}

This appendix first shows why the relative residual preserves the two
one-step bounds. It then transfers those bounds to the restart
controller and gives a finite scalar procedure for meeting the test.
All computations here use the reduced coefficients already formed by
the algorithm.

\subsection{Why the model residual preserves convergence}
\begin{proof}[Proof of Proposition~\ref{prop:inexact-model}]
Fix one trial and write $x=x^k$, $s=s^k$, and $\widehat H=H_k$.
The trial model is
\[
 m_x(v)=\nabla f(x)^Tv+\tfrac12v^T\nabla^2f(x)v
                       +\tfrac{\widehat H}{3}\|v\|^3.
\]
For this proof, let $r$ denote the projected model gradient on the
left side of \eqref{eq:inexact-model}, before taking its norm, and put
$b=s^T\nabla^2 f(x)s\ge0$. Taking its inner product with $s$ gives
\[
 \nabla f(x)^Ts
 =-b-\widehat H\|s\|^3+r^Ts
 \le-b-\tfrac56\widehat H\|s\|^3.
\]
Since the current gradient belongs to the space and is nonzero,
$s=0$ cannot satisfy the criterion. Along the segment, the trial
cubic model therefore obeys, for $0<t\le1$,
\[
 m_x(ts)\le-t(1-t/2)b
       -\widehat Ht(5/6-t^2/3)\|s\|^3<0.
\]
When $\widehat H$ is at least the true localized half-Lipschitz
constant, the first-exit argument of Lemma~\ref{lem:invariance}
applies to this strict inequality. The segment stays in the initial
sublevel set, and both Taylor certificates hold. At an accepted trial,
the function certificate gives
\begin{equation}\label{eq:inexact-decrease}
 f(x)-f(x+s)\ge-m_x(s)
 \ge\tfrac12b+\tfrac12\widehat H\|s\|^3.
\end{equation}
Thus the same backtracking thresholds and parameter bounds apply,
and accepted trials remain monotone even below the true constant.

The gradient Taylor certificate and \eqref{eq:inexact-model} imply
\begin{align*}
 \|\Pi_{\mathcal S^k}\nabla f(x+s)\|
 &\le\tfrac{13}{6}\widehat H\|s\|^2\\
 &\le\tfrac{13}{6}\,2^{2/3}\widehat H^{1/3}
              (f(x)-f(x+s))^{2/3}.
\end{align*}
The fixed-coefficient version uses the corresponding Taylor bound.
For the gradient decrease bound, positive semidefiniteness and
$\nabla^2 f(x)\preceq L_0I$ give
\[
 \|\nabla f(x)\|^2
 \le3L_0 b+\tfrac{37}{12}\widehat H^2\|s\|^4.
\]
Indeed, the projected stationarity equation expresses the gradient
as the sum of $-\Pi_{\mathcal S^k}\nabla^2 f(x)s$,
$-\widehat H\|s\|s$, and $r$; the square of its norm is at most
three times the sum of their squared norms.
The same inner-product identity also gives
\[
 \widehat H\|s\|
 \le\sqrt{\tfrac65\widehat H\|\nabla f(x)\|}
 \le\sqrt{\tfrac65\widehat H L_0D_0}.
\]
Combining these estimates with \eqref{eq:inexact-decrease} yields
\[
 \|\nabla f(x)\|^2
 \le7\left(L_0+\sqrt{\tfrac65\widehat H L_0D_0}\right)
              (f(x)-f(x+s)).
\]
The proofs of Lemmas~\ref{lem:reserve}--\ref{lem:window} and
Theorem~\ref{thm:main} use only these gradient and correction-error bounds.
Substituting the displayed constants gives the same global order;
in the adaptive case use the upper bound on $H_k$.
The same two bounds also give \eqref{eq:epoch-bound} with changed
constants. Increasing the constant in the epoch induction accordingly
preserves the inverse-square rate.

Locally, $\nabla^2f(x)\succeq\mu I$ for some $\mu>0$, and the inner-product identity
gives $\|s\|\le\|\nabla f(x)\|/\mu=O(\|x^k-x^\star\|_\star)$.
For bounded $H_k$, both the model regularization gradient and $r$
are $O(\|x^k-x^\star\|_\star^2)$. The local projected Newton decomposition thus retains
an $O(\|x^k-x^\star\|_\star^2)$ remainder in the search space. The proofs of Lemma~\ref{lem:local-projection} and
Theorem~\ref{thm:local} therefore apply unchanged. The displayed
decrease bounds also give the stated PL factor. The step-length
coefficient is $H/2>0$ for fixed coefficients, or $H_{\min}/2>0$
for the adaptive tail under the lower-bound condition in
Proposition~\ref{prop:inexact-model}. Apply the proof of
Theorem~\ref{thm:local-pl} to obtain neighborhood invariance,
finite trajectory length, and iterate convergence with the stated
constants, also across restarts.

Finally, for a nonzero current gradient the reduced cubic model is
coercive and strictly convex, with a unique nonzero minimizer.
Along any inner sequence converging to that minimizer, the residual
tends to zero, whereas the right side of \eqref{eq:inexact-model}
tends to a positive number. The criterion is therefore met after
finitely many inner iterations. Its evaluation needs only reduced
matrix-vector arithmetic after the original at most three HVPs
(four with the optional memory direction).
\end{proof}

\subsection{Why the accelerated controller is preserved}
\begin{proof}[Proof of Corollary~\ref{cor:inexact-acceleration}]
For the global induction, abbreviate
\[
 C_{\mathrm{inexact}}=81D_0^2
 \left[\tfrac72\left(L_0+\sqrt{\tfrac65H_{\max}L_0D_0}\right)
                               +1024H_{\max}D_0\right].
\]
On a current sublevel with Hessian bound $L$ and radius $D$ about
$x^\star$, Proposition~\ref{prop:inexact-model} gives
\[
 \|\nabla f(x^k)\|^2\le C_{\mathrm{grad}}d_k,\qquad
 \|e^{k+1}\|\le C_{\mathrm{corr}}d_k^{2/3},
 \qquad d_k=f(x^k)-f(x^{k+1}),
\]
where
\[
 C_{\mathrm{grad}}=7\left(L+\sqrt{\frac65H_{\max}LD}\right),
 \qquad C_{\mathrm{corr}}^3=\frac{2197}{54}H_{\max}.
\]
These estimates hold through every reset. The weighted decrease
estimate and weighted Young inequality used in
\eqref{eq:epoch-bound} therefore give, for a completed epoch
of $4t+1$ steps,
\[
 9t^2\Delta_{\mathrm{end}}
 \le (t^2-t)\Delta_{\mathrm{start}}
       +\frac{C_{\mathrm{grad}}}2D^2
       +\frac{512}{27}C_{\mathrm{corr}}^3D^3
 \le (t^2-t)\Delta_{\mathrm{start}}
       +\frac{C_{\mathrm{grad}}}2D^2+1024H_{\max}D^3.
\]
Here $\Delta_{\mathrm{start}},\Delta_{\mathrm{end}}$ are the objective gaps
at the epoch endpoints, and
$(512/27)(2197/54)=562432/729<1024$.
The deterministic-epoch induction now gives the base bound
$C_{\mathrm{inexact}}/(K+1)^2$. The global-stage argument in
Theorem~\ref{thm:automatic-acceleration} gives the stated factor
$64$ for the controller.

On a sublevel where $\nabla^2 f\succeq\mu I$, the same epoch
estimate yields
\[
 \Delta_{\mathrm{end}}\le
 \frac{t^2-t+\Gamma}{9t^2}\Delta_{\mathrm{start}},\qquad
 \Gamma=\frac{C_{\mathrm{grad}}}{\mu}+\frac{2048H_{\max}D}{\mu}.
\]
The first step also ensures an epoch decrease of at least
$\theta\Delta_{\mathrm{start}}$, where $\theta=2\mu/C_{\mathrm{grad}}$.
Thus the controller proof uses the same two estimates,
with these new analysis constants. Their proofs need only gradient
inclusion and correction-space containment, so they apply to the
three-direction variant as well.

The explicit transition constants can also be retained. Write
$\mu_\star=\lambda_{\min}(\nabla^2f(x^\star))$ and
$L_\star=\lambda_{\max}(\nabla^2f(x^\star))$.
In the transition region constructed in
Corollary~\ref{cor:explicit-transition}, the radius bound
$\rho=\mu_\star/(2048H_{\max})$ and the Hessian Lipschitz bound give
\[
 \mu=\frac{1023}{1024}\mu_\star,\qquad
 L\le\frac{1025}{1024}L_\star,\qquad D\le\rho.
\]
Consequently,
\[
 \sqrt{\frac65H_{\max}LD}\le\frac{L_\star}{32},\qquad
 C_{\mathrm{grad}}<8L_\star,
\]
and hence
\[
 \Gamma\le\frac{1024}{1023}(8\kappa_\star+1)
           <10\kappa_\star<19\kappa_\star,\qquad
 \theta\ge\frac{1023}{4096\kappa_\star}
      >\frac1{9\kappa_\star}.
\]
The geometric localization using
$\varepsilon_{\mathrm{loc}}=\mu_\star^3/(2^{25}H_{\max}^2)$ depends only on
monotonicity and the Hessian Lipschitz bound, so it is unaffected
by the model residual. The bounds on the tuner length, completed-stage
progress, and phase accumulation therefore apply with the same
root exponent and finite-tail constants.
Only the global constant used to bound entry changes.

Finally, all residual checks use the cached reduced model.
The finite-termination and oracle arguments of
Proposition~\ref{prop:inexact-model} apply to every trial.
A stationary initial point, or subsequent finite termination,
is covered by constant extension.
\end{proof}

\subsection{How to stop the scalar solve after finitely many steps}
We now specify a bracket and a stopping count for the scalar equation
from Section~\ref{sec:reduced-model}. For one trial, let $Q^k$ have
orthonormal columns spanning $\mathcal S^k$, put
$B^k=(Q^k)^T\nabla^2f(x^k)Q^k$, and let $\widehat H>0$ be its
cubic coefficient. The current gradient belongs to $\mathcal S^k$.
The following construction uses only these cached quantities.

\begin{proposition}[A constructive bound for the reduced cubic solve]
\label{prop:cubic-inner-work}
For a nonstationary trial, retain the notation $Q^k,B^k,\widehat H$
of this appendix. Choose a computable $L\ge\|B^k\|$, for example
$L=\operatorname{tr}B^k$, and define
\[
 a(\lambda)=-(B^k+\lambda I)^{-1}(Q^k)^T\nabla f(x^k),\qquad
 \psi(\lambda)=\frac{\lambda}{\widehat H\|a(\lambda)\|},
 \qquad \lambda>0.
\]
The unique root of $\psi(\lambda)=1$ lies in $[\ell,U]$, where
\[
 U=\sqrt{\widehat H\|\nabla f(x^k)\|},\qquad
 \ell=\frac{U^2}{L+U}.
\]
Let $J_{\mathrm{inner}}$ denote the number of inner bisections,
which is separate from the accepted-step count $K$. If $L>0$, set
\begin{equation}\label{eq:cubic-inner-count}
 J_{\mathrm{inner}}=\max\left\{0,\left\lceil
 \log_2\!\left(\frac{2\log(1+L/U)}{\log(7/6)}\right)
 \right\rceil\right\}.
\end{equation}
Starting with $[\ell,U]$, perform $J_{\mathrm{inner}}$ geometric bisections: evaluate
$\psi$ at the geometric mean of the endpoints, replace the lower
endpoint when this value is at most one, and otherwise replace the
upper endpoint. Let $v$ be the final upper endpoint. Then
$s=Q^k a(v)$ satisfies \eqref{eq:inexact-model} with $H_k=\widehat H$.
The procedure uses at most $J_{\mathrm{inner}}+1$ reduced linear solves, including a
final evaluation at $v$, and no additional objective-oracle calls.
If $L=0$, the bracket collapses to $\ell=U$ and one reduced solve
gives the exact model minimizer.
\end{proposition}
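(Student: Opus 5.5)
The plan is to reduce everything to scalar monotonicity of $\psi$ and then translate a bound on $\psi(v)$ into the residual test \eqref{eq:inexact-model}. Write $g=(Q^k)^T\nabla f(x^k)$. Since $\nabla f(x^k)\in\mathcal S^k$ and $Q^k$ is orthonormal, $\|g\|=\|\nabla f(x^k)\|=:G>0$. Diagonalize $B^k=\sum_i\mu_i p_ip_i^T$ with $0\le\mu_i\le L$ (convexity gives $\mu_i\ge0$). Then
\[
\|a(\lambda)\|^2=\sum_i\frac{(p_i^Tg)^2}{(\lambda+\mu_i)^2}.
\]
Hence $\|a(\lambda)\|$ is strictly decreasing and $\psi$ is strictly increasing on $(0,\infty)$, which gives uniqueness of the root $\lambda_\star$ (as in Proposition~\ref{prop:oracles}). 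The same formula gives the two-sided bound
\[
\frac{G}{L+\lambda}\le\|a(\lambda)\|\le\frac{G}{\lambda}.
\]

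\textbf{Bracket.} I would check the endpoints directly.
\begin{itemize}[leftmargin=*]
 \item Upper endpoint: $\psi(U)\ge U^2/(\widehat HG)=1$.
 \item Lower endpoint: $\psi(\ell)\le\ell(L+\ell)/(\widehat HG)$. Since $\ell=U^2/(L+U)\le U$, we have $\ell(L+\ell)\le\ell(L+U)=U^2=\widehat HG$, so $\psi(\ell)\le1$.
\end{itemize}
Thus $\lambda_\star\in[\ell,U]$. The bisection rule preserves the invariant $\psi(\text{lower})\le1\le\psi(\text{upper})$, so $\lambda_\star$ stays in every bracket. Each geometric bisection halves the log-width $\log(\text{upper}/\text{lower})$, which starts at $\log(U/\ell)=\log(1+L/U)$. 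By the choice \eqref{eq:cubic-inner-count}, after $J_{\mathrm{inner}}$ steps the final upper endpoint $v$ satisfies
\[
1\le\frac{v}{\lambda_\star}\le\Bigl(\frac76\Bigr)^{1/2}.
\]

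\textbf{Residual.} In reduced coordinates the defining equation $(B^k+vI)a(v)=-g$ gives
\[
B^ka(v)+\widehat H\|a(v)\|a(v)+g=\bigl(\widehat H\|a(v)\|-v\bigr)a(v).
\]
The Euclidean projection onto $\mathcal S^k$ of a model gradient at $s=Q^ka$ equals $Q^k$ times this reduced vector, so the norms agree. Condition \eqref{eq:inexact-model} is therefore equivalent to $|v-\widehat H\|a(v)\||\le\tfrac16\widehat H\|a(v)\|$, that is, $\psi(v)\in[5/6,7/6]$.

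The lower bound $\psi(v)\ge1$ comes from the invariant. The step I expect to be the crux is the upper bound, which needs a comparison between $\psi(v)$ and $\psi(\lambda_\star)=1$. I would use the componentwise ratio
\[
\frac{v+\mu_i}{\lambda_\star+\mu_i}\le\frac{v}{\lambda_\star}\qquad(\mu_i\ge0),
\]
which gives $\|a(\lambda_\star)\|\le(v/\lambda_\star)\|a(v)\|$. Consequently
\[
\psi(v)\le\Bigl(\frac{v}{\lambda_\star}\Bigr)^2\psi(\lambda_\star)\le\frac76.
\]
This is exactly why the count uses $\log(7/6)/2$ in its denominator.

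\textbf{Counting and the degenerate case.} Each bisection costs one reduced solve, plus one final solve at $v$, for $J_{\mathrm{inner}}+1$ in total. Only $B^k$ and $g$ are used, so no additional objective-oracle calls occur. If $L=0$, then $B^k=0$ and $\ell=U$. In that case $a(U)=-g/U$ and $\psi(U)=U^2/(\widehat HG)=1$, so one solve returns the exact minimizer.
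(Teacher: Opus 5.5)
Your proposal is correct and follows the paper's proof essentially step for step. It uses the same two-sided spectral bound on $\|a(\lambda)\|$ for the bracket, the same halving of the logarithmic width, the same ratio estimate $\psi(v)\le (v/u)^2\psi(u)$, and the same identity that the projected model gradient equals $(\widehat H\|s\|-v)s$; the only cosmetic difference is that you compare $v$ with the root $\lambda_\star$ rather than with the final lower endpoint $u$, which is equivalent since $u\le\lambda_\star\le v$.
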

\begin{proof}
Since the current gradient belongs to the search space,
$\|(Q^k)^T\nabla f(x^k)\|=\|\nabla f(x^k)\|>0$.
The spectral bounds $0\preceq B^k\preceq LI$ imply
\[
 \frac{\|\nabla f(x^k)\|}{L+\lambda}
 \le\|a(\lambda)\|
 \le\frac{\|\nabla f(x^k)\|}{\lambda}.
\]
The function $\psi$ is strictly increasing. The upper norm bound
gives $\psi(U)\ge1$, while the lower bound and $\ell\le U$ give
\[
 \psi(\ell)\le\frac{\ell(L+\ell)}{U^2}
              \le\frac{\ell(L+U)}{U^2}=1.
\]
For any $0<u\le v$, diagonalizing the positive semidefinite matrix
$B^k$ gives
\[
 1\le\frac{\|a(u)\|}{\|a(v)\|}\le\frac vu,
 \qquad
 \frac{\psi(v)}{\psi(u)}\le\left(\frac vu\right)^2.
\]
Every bisection preserves the root bracket and halves its logarithmic
width. If $[u,v]$ is the final interval, then
\[
 1\le\psi(v)\le(v/u)^2
 =\exp\!\left(2^{1-J_{\mathrm{inner}}}\log(U/\ell)\right)\le\frac76,
\]
where $\log(U/\ell)=\log(1+L/U)$ and
\eqref{eq:cubic-inner-count} gives the last inequality.
For $s=Q^k a(v)$, the projected model gradient is exactly
$(\widehat H\|s\|-v)s$. Therefore its norm is
\[
 (\psi(v)-1)\widehat H\|s\|^2
 \le\frac{\widehat H}{6}\|s\|^2,
\]
as required. Each midpoint and the final endpoint require at most
one reduced solve; the initial bracket needs no numerical root test.
If $L=0$, then $B^k=0$ and $U=\ell$ solves the secular equation exactly.
\end{proof}

All systems in this procedure have order at most three (four with the
optional direction of Corollary~\ref{cor:three-hvp-restarts}) and are positive
definite because their shifts are positive. No strong-convexity parameter
is required, even when $B^k$ is singular. The count grows doubly
logarithmically with $L/U$ and concerns exact real arithmetic; it is not
a floating-point or bit-complexity guarantee. The scalar reduction is
the usual regularized-model equation. The role of the explicit bracket
and count here is to attain the relative accuracy used in our convergence
proofs.

\end{document}